
\documentclass{amsart}
\usepackage{amsfonts,amsthm,amsmath}
\usepackage{amssymb}
\usepackage{amscd}
\usepackage[utf8]{inputenc}
\usepackage[a4paper]{geometry}
\usepackage{enumerate}
\usepackage[usenames,dvipsnames]{color}
\usepackage{tikz}
\usepackage{soul}
\usepackage{array}
\usepackage{array,tabularx}

\oddsidemargin 0pt \evensidemargin 0pt \marginparsep 10pt
\topmargin 0pt \baselineskip 14pt \textwidth 6in \textheight 9in
\linespread{1.2}




\numberwithin{equation}{section}
\numberwithin{equation}{subsection}

\theoremstyle{plain}

\newtheorem{theorem}[equation]{Theorem}
\newtheorem{lemma}[equation]{Lemma}
\newtheorem{proposition}[equation]{Proposition}

\newtheorem{corollary}[equation]{Corollary}

\theoremstyle{definition}
\newtheorem{example}[equation]{Example}
\newtheorem{remark}[equation]{Remark}

\newtheorem{definition}[equation]{Definition}



\newcommand{\tir}{\tilde{r}}
\newcommand{\tZ}{\widetilde{Z}}

\newcommand{\tomega}{\widetilde{\omega}}

\def\C{\mathbb C}
\def\Q{\mathbb Q}

\def\Z{\mathbb Z}

\newcommand{\cale}{{\mathcal E}}

\newcommand{\caly}{{\mathcal Y}}
\newcommand{\calx}{{\mathcal X}}\newcommand{\calz}{{\mathcal Z}}
\newcommand{\calv}{{\mathcal V}}

\newcommand{\calm}{{\mathcal M}}

\newcommand{\cali}{{\mathcal I}}

\newcommand{\calf}{{\mathcal F}}
\newcommand{\calO}{{\mathcal O}}

\newcommand{\calS}{{\mathcal S}}

\newcommand{\calL}{\mathcal{L}}

\newcommand{\tX}{\widetilde{X}}

\newcommand{\cO}{{\mathcal O}}
\newcommand{\bP}{{\mathbb P}}

\newcommand{\bC}{{\mathbb C}}

\newcommand{\eca}{{\rm ECa}}
\newcommand{\pic}{{\rm Pic}}

\newcommand{\bt}{{\mathbf t}}

\newcommand{\bZ}{{\mathbb{Z}}}
\newcommand{\bQ}{{\mathbb{Q}}}

\newcommand{\omegl}{\lambda}

\newcommand{\omeg}{\varsigma}
\newcommand{\tomeg}{\widetilde{\omeg}}

\author{J\'anos Nagy}
\address{Central European University, Dept. of Mathematics,  Budapest, Hungary}
\email{nagy\textunderscore janos@phd.ceu.edu}

\author{Andr\'as N\'emethi}
\address{Alfr\'ed R\'enyi Institute of Mathematics,
Hungarian Academy of Sciences,
Re\'altanoda utca 13-15, H-1053, Budapest, Hungary \newline
 \hspace*{4mm} ELTE - University of Budapest, Dept. of Geometry, Budapest, Hungary \newline \hspace*{4mm}
BCAM - Basque Center for Applied Math.,
Mazarredo, 14 E48009 Bilbao, Basque Country – Spain}
\email{nemethi.andras@renyi.mta.hu }

\title{The Abel map for surface singularities \\
II. Generic analytic structure
}

\begin{document}

\keywords{normal surface singularity,
resolution  graph, rational homology sphere, natural  line bundle, Poincar\'e series, Hilbert series,
Abel map, Brill--Noether theory, effective Cartier divisors, Picard group,
Laufer duality, generic singularity, generic line bundle, analytic semigroup, cohomological cycle,
maximal ideal cycle}

\subjclass[2010]{Primary. 32S05, 32S25, 32S50, 57M27
Secondary. 14Bxx, 14J80}

\begin{abstract}
We study the analytic and topological invariants
associated with complex normal surface singularities.
Our goal is to provide topological formulae for several discrete analytic invariants
whenever the analytic structure is generic  (with respect to a fixed topological type),
under the condition that the link is  a rational homology sphere.
The list of analytic invariants include: the geometric genus, the cohomology of certain
natural line bundles, the cohomology of their restrictions on effective cycles
(supported on the exceptional curve of a resolution),
the cohomological cycle of natural line bundles,
the multivariable Hilbert and Poincar\'e series associated with the divisorial filtration,
the analytic semigroup, the maximal ideal cycle.

The first part contains the definition of `generic structure' based on the work of Laufer
\cite{LaDef1}.
The second technical ingredient  is the Abel map developed in \cite{NNI}.

The results can be compared with certain parallel statements from
the Brill--Noether theory and from the theory of Abel map associated with projective
smooth curves, though the tools and machineries are  very different.
\end{abstract}

\maketitle

\linespread{1.2}


\pagestyle{myheadings} \markboth{{\normalsize  J. Nagy, A. N\'emethi}} {{\normalsize Abel maps }}


\section{Introduction}\label{s:intr}
\subsection{} Our major objects in this note are the analytic and topological invariants
associated with complex normal surface singularity germs.
Our goal is to provide topological formulae for several discrete analytic invariants
whenever the analytic structure is generic  (with respect to a fixed topological type).
Regarding this problem very little is known in the present literature.
The type of formulae of the topological characterizations of the present article are
totally new, as well as the methods (based on the newly created theory of Abel map).

In order to formulate the invariants and the topological characterizations we need some notation.
Let $\tX\to X$ be a good resolution with irreducible exceptional curves $\{E_v\}_{v\in\calv}$,
with resolution graph $\Gamma$, negative definite
intersection lattice $L=H_2(\tX,\Z)$, dual lattice $L'=H^2(\tX,\Z)\simeq H_2(\tX,\partial \tX,\Z)$,
and discriminant group $H=L'/L$ (for details see \ref{ss:2.1}). We assume that the link $M$ of $(X,o)$ is a rational homology sphere, that is, $\Gamma$ is a tree of rational $E_v$'s. In such a case $H=H_1(M,\Z)$ is finite. Usually $Z$ will denote an effective cycle supported on the exceptional curve $E$. $L'$ is also the target of the surjective first Chern class map $c_1:{\rm Pic}(\tX)\to L'$, set
 $c_1^{-1}(l')={\rm Pic}^{l'}(\tX)$. For any Chern class one defines
the `natural line bundle' $\calO_{\tX}(l')\in {\rm Pic}^{l'}(\tX)$, and its restrictions
$\calO_Z(l')$, cf. \ref{ss:natline}.

In the sequel we fix a topological type, that is, a resolution graph. The topological invariants are
read from $\Gamma$, or equivalently, from $L$. The most elementary one is the
`Riemann--Roch' expression $\chi:L'\to \Q$ given by $\chi(l'):=-(l',l'-Z_K)/2$, where $Z_K\in L'$
is the anticanonical cycle defined combinatorially by the adjunction formulae, cf. \ref{ss:2.1}.

 The list of analytic invariants, associated with a generic analytic type
  (with respect to the fixed graph),
 which are described in the present article topologically
  are the following:
 $h^1(\calO_Z)$, $h^1(\calO_Z(l'))$ (with certain restriction on the Chern class $l'$),
  --- this last one applied  for $Z\gg 0$ provides  $h^1(\calO_{\tX})$
 and  $h^1(\calO_{\tX}(l')$) too ---, the cohomological cycle of natural line bundles,
the multivariable Hilbert and Poincar\'e series associated with the divisorial filtration,
the analytic semigroup, the maximal ideal cycle.
See \cite{CDGPs,CDGEq,Lipman,Nfive,NPS,NCL,Ok,MR}
for the definitions and relationships between them. Here some definitions will be recalled in section \ref{s:appli}.

Surprisingly, in all the
 topological characterization we need to use merely $\chi$,
 however,  it is really remarkable  the level of
complexity and subtlety  of the
 combinatorial expressions/invariants carried by this `simple' (?) quadratic function.
Definitely, this  can happen due to the fact that we work over the lattices $L$ and $L'$,
and the position of the lattice points with respect to the level sets of $\chi$ play
the key role.
It is a real challenge now to interpret these expressions in terms of lattice cohomology \cite{lattice,NJEMS} or other topological 3--manifold invariants.

\vspace{2mm}

\noindent {\bf Theorem A.}
{\em Fix a resolution graph and assume that the analytic type of  $\tX$ is generic. Then
the following identities hold:\\
(a) For any effective cycle $Z\in L_{>0}$
\begin{equation*}
h^1(\calO_Z) = 1-\min_{0< l \leq Z,l\in L}\{\chi(l)\}.
\end{equation*}
(b) If $l'=\sum_{v\in \calv}l'_vE_v \in L'$ satisfies
$l'_v <0$ for any $E_v$ in the support of $Z$ then
\begin{equation*}
h^1(Z,\calO_Z(l'))=\chi(-l')-\min _{0\leq l\leq Z, l\in L}\, \{\chi(-l'+l)\}.
\end{equation*}
(For a characterization valid for more general Chern classes $l'$ see section \ref{s:appli}.)\\
(c) If $p_g(X,o)=h^1(\tX,\calO_{\tX})$ is the geometric genus of $(X,o)$ then
\begin{equation*}
p_g(X,o)= 1-\min_{l\in L_{>0}}\{\chi(l)\} =-\min_{l\in L}\{\chi(l)\}+\begin{cases}
1 & \mbox{if $(X,o)$ is not rational}, \\
0 & \mbox{else}.
\end{cases}
\end{equation*}
(d) More generally, for any $l'\in L'$
\begin{equation*}
h^1(\tX,\calO_{\tX}(l'))=\chi(-l')-\min _{l\in L_{\geq 0}}\, \{\chi(-l'+l)\}+
\begin{cases}
1 & \mbox{if \ $l'\in L_{\geq 0}$ and $(X,o)$ is not rational}, \\
0 & \mbox{else}.
\end{cases}
\end{equation*}
(e) Let $H(\bt)=\sum _{l'\in L'} \mathfrak{h}(l')\bt^{l'}$ be the multivariable equivariant Hilbert series associated with the divisorial filtration.
Write $l'$ as $r_h+l_0$ for some $l_0\in L$ and  $r_h\in L' $ the unique representative of $h=[l']$ in the semi-open cube of $L'$.
Then  $\mathfrak{h}(r_h)=0$ for $l_0=0$. Furthermore, for $l_0>0$ and $h\not=0$
\begin{equation*}
\mathfrak{h}(l')=
\min_{l\in L_{\geq 0}} \{\chi(l'+l)\}-\min_{l\in L_{\geq 0}} \{\chi(r_h+l)\}.
\end{equation*}
For $h=0$ and $l'=l_0>0$
\begin{equation*}
\mathfrak{h}(l_0)=\min_{l\in L_{\geq 0}} \{\chi(l_0+l)\}-\min_{l\in L_{\geq 0}} \{\chi(l)\}+
\begin{cases}
1 & \mbox{if $(X,o)$ is not rational}, \\
0 & \mbox{else}.
\end{cases}
\end{equation*}
(f)
Write the multivariable equivariant Poincar\'e series $P(\bt)=-H(\bt)\cdot \prod_{v\in\calv}
(1-t_v^{-1})$ as $\sum_{l'\in\calS'}\mathfrak{p}(l')\bt^{l'}$.
It is supported in the Lipman (antinef) cone, in particular in $L'_{\geq 0}$.
 Then
$\mathfrak{p}(0)=1$ and for $l'>0$ one has
$$\mathfrak{p}(l')= \sum_{I\subset \calv} (-1)^{|I|+1}\, \min_{ l\in L_{\geq 0}}
\chi(l'+l+E_I).$$
(g) Consider  the   analytic semigroup
$\calS'_{an}:= \{l'\in L' \,:\,\calO_{\tX}(l')\ \mbox{has no  fixed components }\}$. Then
$$\calS'_{an}= \{l'\,:\, \chi(l')<
\chi(l' +l) \ \mbox{for any $l\in L_{>0}$}\}\cup\{0\}.$$
(h)
 Assume that $\Gamma$ is a non--rational graph  and set
$\calm=\{ Z\in L_{>0}\,:\, \chi(Z)=\min _{l\in L}\chi(l)\}$.

Then the unique minimal element of $\calm$ is the cohomological cycle, while the
unique maximal element of $\calm$ is the maximal ideal cycle of $\tX$.
}

\subsection{The Abel map}\label{ss:Abel} The main tool of the present note is the Abel map constructed
and studied in \cite{NNI}. Though in [loc.cit.] we also listed several applications,  the
present note shows its power, its  applicability in a really difficult problem,  with
a priori unexpected answers which become totally natural and motivated from the perspective of this new approach.

Let us recall shortly this object
(for details see \cite{NNI} or \S\ref{s:prel} and \ref{ss:natline} here).
Let  $(X,o)$,   $\tX\to X$,  $L$ and  $L'$ as above.
Then for any effective cycle
$Z$ supported on $E$   and for any (possible) Chern class $l'\in L'$ we consider
the space $\eca^{l'}(Z)$ of effective Cartier
divisors $D$ supported on $Z$, whose associated line bundles $\calO_Z(D)$ have first Chern class $l'$.
Furthermore, we also consider the Abel map  $c^{l'}(Z):\eca^{l'}(Z)\to \pic^{l'}(Z)$, $D\mapsto \calO_Z(D)$.

Using the Abel map,  in \cite[Th. 5.3.1]{NNI} we have shown  that for
any  analytic singularity and resolution with fixed resolution graph,  and for
any $\calL\in \pic^{l'}(Z)$,  one has
$h^1(Z,\calL)\geq \chi(-l')-\min_{0\leq l\leq Z, \, l\in L}\chi(-l'+l)$, and equality holds for
a generic line bundle $\calL_{gen}\in \pic^{l'}(Z)$.
 In particular, for any analytic type, $\calL_{gen}\in \pic^{l'}(Z)$ can be expressed
 combinatorially.
 Now,  the expectation and our guiding principle is the following:
for a generic analytic structure the natural line bundle $\calO_Z(l')$ should have
 the same $h^1$ as the
generic line bundle  $\calL_{gen}\in \pic^{l'}(Z)$ (associated with any analytic structure).
This is the key technical statement  of the note.

\vspace{2mm}

\noindent {\bf Theorem B.} {\it
  Assume that  $\tX$ is generic. Under some (necessary)
  negativity  restriction on the Chern class
  $l'$ (see Theorem \ref{th:CLB1} and Remark \ref{rem:cohGen}(b))
   the following facts hold.

\vspace{1mm}

\noindent (I) \    The following facts are equivalent:

(a) $\calO_Z(l')\in {\rm im}(c^{\tilde{l}})$, where $\calO_Z(l')$ is the natural line bundle
with Chern class $l'$;

(b) $\calL_{gen}\in {\rm im}(c^{\tilde{l}})$, where  $\calL_{gen}$
 is a generic line bundle in $\pic^{\tilde{l}}(Z)$ (that is,
 $c^{\tilde{l}}$ is dominant);

 (c) $\calO_Z(l')\in {\rm im}(c^{\tilde{l}})$,
 and for any $D\in (c^{\tilde{l}})^{-1}(\calO_Z(l'))$ the tangent map
 $T_Dc^{\tilde{l}}: T_D\eca^{\tilde{l}}(Z)\to T_{\calO_Z(l')}\pic^{\tilde{l}}(Z)$ is surjective.

 \vspace{1mm}

 \noindent (II)
$h^i(Z,\calO_Z(l'))=h^i(Z,\calL_{gen})$  for $i=0,1$ and
 for a   generic line bundle
 $\calL_{gen}\in \pic^{\tilde{l}}(Z)$.}

 \vspace{3mm}

 The proof is long and technical, it fills in  all section 5 (the `hard' part
  is (a)$\Rightarrow$(c)).
It uses the explicit description  of tangent map of $c^{l'}$   in terms of  Laufer
duality (integration of forms along divisors, cf. \ref{ss:Integ}).
In this section certain  familiarity with \cite{NNI} might help the reading.

\vspace{2mm}

By this result, if $\tX$ has generic analytic structure, then
the cohomology of natural line bundles can be expressed by the very same topological formula
as $\calL_{gen}$ with the same Chern class. Then all the formulae of Theorem A above follow
directly.

\vspace{2mm}

In the next paragraph we say a few words about `generic analytic type'.

\subsection{Discussion regarding the `generic analytic type'}
Let us comment first what kind of difficulties appear in the definition and study
 of `generic' analytic type.
The point is that for  a fixed topological type the moduli space  of
all analytic structures supported by that fixed topological type, is not yet described in
the literature; hence, we cannot define our generic structure as a generic point of such a space.
Laufer in \cite{Laufer73} characterized those topological types which support only one
analytic type, but about the general cases very little is known.
Usually, generic structures --- when they appeared ---
were introduced   by certain ad-hoc definitions,
or only in particular situations. In a slightly different direction
a remarkable  progress was made by Laufer (see e.g. \cite{LaDef1})
when  he defined {\it local complete deformations}  of (resolution of) singularities. This parameter space
will be the major tool
in our working definition as well (see \ref{ss:genan}).

However, even if one defines a certain  `genericity'  notion by eliminating a  discriminant
from a
parameter space (consisting of the pathological objects from the point of view of the discussion),
the next  hard major task is to exploit from the genericity  some key
geometric/numerical/cohomological properties.
E.g., in the present article this is done via Theorem B.

Regarding the problem to find the values of the analytic invariants associated with the
generic analytic type,
a crucial   obstruction was (before the present note)
the {\it lack of examples and experience}.
E.g.,
Laufer  in \cite{Laufer77} proved that
a generic elliptic singularity has geometric genus $p_g=1$,
but except this almost no other example  is known.
Even more, using the known statements of the literature, it is almost impossible to
 guess what are the possible topological candidates for the
invariants of the generic analytic structure. The expectation is that they should be certain
sharp topological bounds, but even if some topological bound is known, usually there are no tools to
prove its realization for the generic (or any) analytic structure.
The situation is exemplified rather trustworthily already by the geometric genus.
Wagreich already in 1970 in \cite{Wa70} defined topologically  the `arithmetical genus' $p_a$
of a normal surface singularity and for any non--rational germ (that is, when $p_g\not=0$) he proved that
$p_a\leq p_g$ (see \cite[p. 425]{Wa70}).
Though in some (easy) cases was known that they agree, analyzing the existing proofs
of the inequality
(see e.g. the very short proof in \cite{NO17}), one might think
that this inequality for germs with
complicated  topological types  probably is  extremely  week. However,
the point is that in the present note we prove that
(contrary to the first naive judgement)
the geometric analytic structure realizes
exactly this $p_a$.  For the other invariants (listed in Theorem A)
even the corresponding candidates were not on the table
(but we expect that they will have some relationship with lattice cohomology \cite{lattice}).

 In fact, even in this article we make the selection of a package of analytic invariants
 (organized around the cohomology of natural line bundles), for which we present the
 corresponding `package of topological  expressions', and we will treat, say, the
Hilbert--Samuel function/multiplicity/embedded-dimension package
 in a forthcoming manuscript (with rather different type of combinatorial answers).

\subsection{The working definition of
the generic analytic type}\label{ss:genan} Usually when we have a parameter space
for a family of geometric objects, the `generic object' might depend essentially
on the fact that  what kind of geometrical
problem we wish to solve, or, what kind of
anomalies we wish to avoid. Accordingly,  we determine a discriminant space
of the non--wished objects, and generic means  its complement.  In the present article all the
discrete  analytic invariants
we treat are basically guided by the cohomology groups of the natural line bundles (for their definition
see \cite{trieste}, \cite{OkumaRat} or \ref{ss:natline} here, they associate in a canonical way
a line bundle to any given Chern class). Hence, the discriminant spaces
(sitting in the base space of complete deformation spaces of Laufer  \cite{LaDef1})
are defined as the `jump loci' of the cohomology groups of the natural line bundles.
In section 3 we recall the needed results of Laufer regarding complete deformations of
some $\tX$, and we build on this our  working definition of general
analytic type.

Note that the natural line bundles are well--defined only if the
link is a rational homology sphere. Furthermore, this assumption appeared in the theory of
Abel maps as well.
Hence, in the article we also
impose this topological
restriction.

\section{Preliminaries and notations}\label{s:prel}
\subsection{Notations regarding a good resolution}\label{ss:2.1}
  \cite{Nfive,trieste,NCL,LPhd,NNI}
Let $(X,o)$ be the germ of a complex analytic normal surface singularity,
 and let us fix  a good resolution  $\phi:\widetilde{X}\to X$ of $(X,o)$.
Let $E$ be  the exceptional curve $\phi^{-1}(0)$ and  $\cup_{v\in\calv}E_v$ be
its irreducible decomposition. Define  $E_I:=\sum_{v\in I}E_v$ for any subset $I\subset \calv$.

We will assume that each $E_v$ is rational, and the dual graph is a tree. This happens exactly when the link
$M$ of $(X,o)$ is a rational homology sphere.

$L:=H_2(\widetilde{X},\mathbb{Z})$  is a lattice  endowed
with the natural  negative definite intersection form  $(\,,\,)$. It is
freely generated by the classes of  $\{E_v\}_{v\in\mathcal{V}}$.
 The dual lattice is $L'={\rm Hom}_\Z(L,\Z)=\{
l'\in L\otimes \Q\,:\, (l',L)\in\Z\}$. It  is generated
by the (anti)dual classes $\{E^*_v\}_{v\in\mathcal{V}}$ defined
by $(E^{*}_{v},E_{w})=-\delta_{vw}$ (where $\delta_{vw}$ stays for the  Kronecker symbol).
$L'$ is also  identified with $H^2(\tX,\Z)$.
The anticanonical cycle $Z_K\in L' $ is defined via the adjunction identities
$(Z_K,E_v)=E_v^2+2$ for all $v$.

All the $E_v$--coordinates of any $E^*_u$ are strict positive.
We define the (rational)  Lipman cone as $\calS':=\{l'\in L'\,:\, (l', E_v)\leq 0 \ \mbox{for all $v$}\}$.
As a monoid it is generated over $\bZ_{\geq 0}$ by $\{E^*_v\}_v$.

$L$ embeds into $L'$ with
 $ L'/L\simeq H_1(M,\mathbb{Z})$.  $L'/L$ is  abridged by $H$.
Each class $h\in H=L'/L$ has a unique representative $r_h\in L'$ in the semi-open cube
$\{\sum_vr_vE_v\in L'\,:\, r_v\in \bQ\cap [0,1)\}$, such that its class  $[r_h]$ is $h$.

There is a natural (partial) ordering of $L'$ and $L$: we write $l_1'\geq l_2'$ if
$l_1'-l_2'=\sum _v r_vE_v$ with all $r_v\geq 0$. We set $L_{\geq 0}=\{l\in L\,:\, l\geq 0\}$ and
$L_{>0}=L_{\geq 0}\setminus \{0\}$.

The support of a cycle $l=\sum n_vE_v$ is defined as  $|l|=\cup_{n_v\not=0}E_v$.

\subsection{The Abel map}\cite{NNI}\label{ss:Integ}
 Let $\pic(\tX)=H^1(\tX,\calO_{\tX}^*)$ be the group of isomorphic
classes of holomorphic line bundles on $\tX$.
The first Chern map $c_1:\pic(\tX)\to L'$ is surjective; write
$\pic^{l'}(\tX)=c_1^{-1}(l')$. Since $H^1(M,\Q)=0$, by the exponential
exact sequence on $\tX$ one has $\pic^0(\tX)\simeq H^1(\tX,\calO_{\tX})\simeq \C^{p_g}$,
where $p_g$ is the geometric genus.

Similarly, if $Z$ is an effective non--zero integral cycle supported by $E$, then ${\rm Pic}(Z)=H^1(Z,\calO_Z^*)$ denotes   the group of isomorphism classes
of invertible sheaves on $Z$. Again, it appears in the exact sequence $
0\to {\rm Pic}^0(Z)\to {\rm Pic}(Z)\stackrel{c_1} {\longrightarrow} L'(|Z|)\to 0$,
where ${\rm Pic}^0(Z)$ is identified with $H^1(Z,\calO_Z)$ by the exponential exact sequence.
Here  $L(|Z|)$ denotes the sublattice of $L$ generated by
the  base element $E_v\subset |Z|$, and $L'(|Z|)$ is its dual lattice.

For any $Z\in L_{>0}$ let
$\eca(Z)$  be the space of (analytic) effective Cartier divisors on 
$Z$. Their supports are zero--dimensional in $E$.
Taking the class of a Cartier divisor provides  the {\it Abel map}
$c:\eca(Z)\to \pic(Z)$.
Let
$\eca^{\tilde{l}}(Z)$ be the set of effective Cartier divisors with
Chern class $\tilde{l}\in L'(|Z|)$, i.e.
$\eca^{\tilde{l}}(Z):=c^{-1}(\pic^{\tilde{l}}(Z))$.
The restriction of $c$ is denoted by   $c^{\tilde{l}}(Z):\eca^{\tilde{l}}(Z)\to \pic^{\tilde{l}}(Z)$.

We also use the notation
$\eca^{l'}(Z):=\eca^{R(l')}(Z)$ and $\pic^{l'}(Z):= \pic^{R(l')}(Z)$ for any $l'\in L'$,
where $R:L'\to L'(|Z|)$  is the cohomological restriction,  dual to the
inclusion $L(|Z|)\hookrightarrow L$. (This means that
$R(E^*_v)=$the (anti)dual of $E_v$ in the lattice $L'(|Z|)$ if $E_v\subset |Z|$ and  $R(E^*_v)=0$ otherwise.)

A line bundle $\calL\in \pic^{\tilde{l}}(Z)$ is in the image
 ${\rm im}(c^{\tilde{l}})$ if and only if it has a section without fixed components, that is,
 if $H^0(Z,\calL)_{reg}\not=\emptyset $, where
$H^0(Z,\calL)_{reg}:=H^0(Z,\calL)\setminus \cup_v H^0(Z-E_v, \calL(-E_v))$.
Here the inclusion of $H^0(Z-E_v, \calL(-E_v))$ into $H^0(Z,\calL)$ is given by the long cohomological exact sequence associated with
$0\to \calL(-E_v)|_{Z-E_v}\to \calL\to \calL|_{E_v}\to 0$,
and it represents the subspace of sections, whose fixed components contain $E_v$.

By this definition (see (3.1.5) of \cite{NNI}) $\eca^{\tilde{l}}(Z)\not=\emptyset$ if and only if
$-\tilde{l}\in \calS'(|Z|)\setminus \{0\}$. It is advantageous to have a similar statement for
$\tilde{l}=0$ too, hence we redefine  $\eca^0(Z)$ as $\{\emptyset\}$, a set/space with one element
(the empty divisor), and $c^0:\eca ^0(Z)\to \pic^0(Z)$ by $c^0(\emptyset)=\calO_Z$. Then
\begin{equation}\label{eq:Chernzero}H^0(Z,\calL)_{reg}\not=\emptyset\ \Leftrightarrow\ \calL=\calO_Z\
\Leftrightarrow\ \calL\in {\rm im}(c^0)  \ \ \ \ (c_1(\calL)=0).\end{equation}
Then the `extended equivalence' reads as:
$\eca^{\tilde{l}}(Z)\not=\emptyset$ if and only if
$-\tilde{l}\in \calS'(|Z|)$. In such a case
$\eca^{\tilde{l}}(Z)$ is a smooth complex algebraic variety  of dimension $(\tilde{l},Z)$,
cf. \cite[Th. 3.1.10]{NNI}.
Furthermore, the Abel map  is an algebraic regular map.
It can be described using Laufer's duality as follows, cf.
 \cite{Laufer72}, \cite[p. 1281]{Laufer77} or \cite{NNI}. First, by Serre duality,
\begin{equation}\label{eq:LD}
H^1(\tX,\cO_{\tX})^*\simeq  H^1_c(\tX,\Omega^2_{\tX})\simeq
H^0(\tX\setminus E,\Omega^2_{\tX})/ H^0(\tX,\Omega^2_{\tX}).\end{equation}
An element of $H^0(\tX\setminus E,\Omega^2_{\tX})/ H^0(\tX,\Omega^2_{\tX})$ can be represented by
the class of a form $\tomega\in H^0(\tX\setminus E,\Omega^2_{\tX})$.
Furthermore,  an element $[\alpha]$ of
$H^1(\tX,\cO_{\tX})$ can be represented by a  $\check{C}ech$ {\it cocyle}
$\alpha_{ij}\in \cO(U_i\cap U_j)$, where $\{U_i\}_i$ is an open cover of $E$,
$U_i\cap U_j\cap U_k=\emptyset$, and each connected component of the
intersections $U_i\cap U_j$ is either a coordinate bidisc $B=\{|u|<2\epsilon,\ |v|<2\epsilon\}$
 with coordinates $(u,v)$, such that
$E\cap B\subset  \{ uv=0\}$,  or a punctured
coordinate bidisc $B=\{\epsilon/2<|v|<2\epsilon,\ |u|<2\epsilon\}$
with coordinates $(u,v)$, such that
$E\cap B=  \{ u=0\}$.
 Then,  Laufer's realization of the duality
 $H^0(\tX\setminus E,\Omega^2_{\tX})/ H^0(\tX,\Omega^2_{\tX})\otimes H^1(\tX,\cO_{\tX})\to \C$
  is
\begin{equation}\label{eq:Stokes}
\langle [\alpha],[\tomega]\rangle=
\sum_{B}\ \int_{|u|=\epsilon, \ |v|=\epsilon} \alpha_{ij}\tomega.
\end{equation}
In particular,
if $\tomega$ has no pole along $E$ in $B$, then the $B$--contribution in the above sum is zero.

This duality, via the isomorphism
$\exp: H^1(\tX,\cO_{\tX})\to c_1^{-1}(0)\subset H^1(\tX,\cO_{\tX}^*)=\pic(\tX)$, can be transported as follows, cf.
\cite{NNI}. (Here we present the case of  a peculiar  divisor
due to the fact that this version will be used later.)
Consider the following situation.
We fix a smooth point $p$ on $E$ ($p\in E_v$), a local bidisc $B\ni p$ with local coordinates $(u,v)$ such that $B\cap E=\{u=0\}$, $B=\{|u|,\, |v|<\epsilon\}$.
We assume that a certain form   $\tomega\in H^0(\tX\setminus E,\Omega^2_{\tX})$ has  local equation
$\tomega=\sum_{ i\in\Z,j\geq 0}a_{i,j} u^iv^jdu\wedge dv$ in $B$.
In the same time,
we fix a divisor $\widetilde{D}$ on $\tX$, whose local equation in $B$ is $v^\ell$,
 $\ell\geq 1$.
 Let $\widetilde{D}_{\underline{t}}$ be another divisor, which
is the same as $\widetilde{D}$ in the complement of $B$ and in $B$ its local equation is $(v+t+\sum_{k\geq 1, l\geq 0} t_{k,l}u^kv^l)^\ell$,
where all $t, \, t_{k,l}\in\C$ and  $|t|, \, |t_{k,l}|\ll 1$.
Then $\widetilde{D}_{\underline{t}}-\widetilde{D}$  is the divisor $\widetilde{D}'=
{\rm div}( g)$, where  $g:=((v+t+\sum_{k\geq 1, l\geq 0} t_{k,l}u^kv^l)/v)^\ell$,
supported in $B$. In particular, $\cO(\widetilde{D}')\in \pic^0(\tX)\subset H^1(\tX, \calO^*_{\tX})$ can be
represented by the cocycle $g|_{B^*}\in \cO^*(B^*)$, where $B^*=\{\epsilon/2<|v|<\epsilon, \, |u|<\epsilon\}$.  Therefore,
 $\log( g|_{B^*})$ is a cocycle in
$B^*$ representing its lifting into $H^1(\tX,\cO_{\tX})$. This  paired with $\tomega$ gives
for $\langle\langle \widetilde{D}_{\underline{t}},[\tomega]\rangle\rangle:=
\langle\exp^{-1} \cO_{\tX}(\widetilde{D}_t-\widetilde{D}),[\tomega]\rangle$ the
expression
\begin{equation}\label{eq:Tomega1}
\ell\cdot \int_{|u|=\epsilon, \ |v|=\epsilon} \log\Big(1+\frac{t+\sum_{k, l} t_{k,l}u^kv^l}{v}\Big)\cdot
 \sum_{ i\in\Z,j\geq 0}a_{i,j} u^iv^jdu\wedge dv.\end{equation}
If $\tomega$ has no pole then $\langle\langle \widetilde{D}_{\underline{t}},[\tomega]\rangle\rangle=0$.
As an example, assume that  $\tomega$ has the form $(h(u,v)/u^o)du\wedge dv$ with $h$ regular and $h(0,0)\not=0$, and $o\geq 1$, while
 $g=(v+tu^{o-1})/v$ and $\ell=1$, then
 \begin{equation}\label{eq:Tomega2}
\langle\langle \widetilde{D}_{\underline{t}},[\tomega]\rangle\rangle=
\int_{|u|=\epsilon, \ |v|=\epsilon} \log\Big(1+\frac{tu^{o-1}}{v}\Big)\cdot
 \frac{h}{u^o}\,du\wedge dv= c\cdot t +\{\mbox{higher order terms}\} \ \  (c\in \C^*). \end{equation}
If $Z\gg0$ then   $H^0(\tX\setminus E,\Omega^2_{\tX})/ H^0(\tX,\Omega^2_{\tX})\simeq
 H^0(\tX,\Omega^2_{\tX}(Z))/ H^0(\tX,\Omega^2_{\tX})$. Furthermore,
if $\tomega_1,\ldots, \tomega_{p_g}$ are representatives of  a basis of  this vector space
and $\widetilde{D}_{\underline{t}}$ is considered as a path in $\eca^{-\ell E^*_v}(Z)$, then
$\widetilde{D}_{\underline{t}}\mapsto (\langle\langle \widetilde{D}_{\underline{t}},
[\tomega_1]\rangle\rangle, \ldots,
\langle\langle \widetilde{D}_{\underline{t}},[\tomega_{p_g}]\rangle\rangle)$
is the restriction of the Abel map to $\widetilde{D}_{\underline{t}}$ (associated with $Z$,
and shifted by the image of $\widetilde{D}$) (cf. \cite{NNI}).

\section{Resolutions  with generic analytic structure}\label{s:GASt}

\subsection{The setup}\label{ss:GAStsetup} We fix a topological type of a normal surface singularity.
This means that we fix  either the $C^\infty$ oriented diffeomorphism type of the link,
or, equivalently,
one of the dual  graphs of a good resolution (all of them are equivalent up to blowing up/down
rational $(-1)$--vertices). We assume that the link is a rational homology sphere, that is, the graph is a tree of rational vertices.

Any such topological type might support several analytic structures. The moduli
space of the possible analytic structures is not described yet in the literature, hence we cannot
rely on it. In particular, the `generic analytic structure', as a `generic' point of this moduli space,
 in this way is not well--defined. However, in order to run/prove the concrete properties
 regarding generic analytic structures,
 instead of such theoretical definition it would be even much better
  to consider a definition based
 on  a list of stability properties under certain concrete deformations
 (whose validity could be expected for the `generic'  analytic structure in the presence of a
 classification space).
 Hence, for us in this note, a generic analytic structure will be
 a structure, which will satisfy such stability properties.
 In order to define them it is convenient to fix a resolution graph $\Gamma$ and treat
 deformation of analytic structures supported on
 resolution spaces having dual graph $\Gamma$.

The type of stability we wish to have is the following.
The topological type (or, the graph $\Gamma$) determines a lower bound
 for the possible values of the geometric genus (which usually depends on the analytic type).
 Let MIN$(\Gamma)$ be the unique optimal bound, that is, MIN$(\Gamma)\leq p_g(X,o)$ for any
 singularity $(X,o)$ which admits  $\Gamma$ as a resolution graph,
 and MIN$(\Gamma)=p_g(X,o)$ for some $(X,o)$.
 Then one of the requirements for the `generic analytic structure' $(X_{gen},o)$ is that
$p_g(X_{gen},o)={\rm MIN}(\Gamma)$.
(In the body of the paper MIN$(\Gamma)$ will be determined explicitly.)
However, we will need several similar stability
requirements involving other line bundles as well (besides the trivial one, which provides $p_g$).
For their definition we need a preparation.
\subsection{Laufer's results}\label{ss:LauferDef}
In this subsection we review some results of Laufer regarding deformations of the analytic structure on a resolution space of a normal surface singularity with fixed resolution graph
(and deformations of non--reduced analytic spaces supported on  exceptional curves) \cite{LaDef1}.

First, let us fix a normal surface singularity $(X,o)$ and a good resolution $\phi:(\tX,E)\to (X,o)$
with reduced exceptional curve $E=\phi^{-1}(o)$, whose irreducible decomposition is $\cup_{v\in\calv}E_v$ and dual graph $\Gamma$.
Let $\cali_v$ be the ideal sheaf of $E_v\subset \tX$. Then for arbitrary
 positive integers $\{r_v\}_{v\in \calv}$ one defines two objects, an analytic one and a topological (combinatorial) one.
 At analytic level, one sets  the ideal sheaf  $\cali(r):=\prod_v \cali_v^{r_v}$
 and the non--reduces space $Z(r)$ with structure sheaf
 $\calO_{Z(r)}:=\calO_{\tX}/\cali(r)$ supported on $E$.

  The topological object is a graph decorated with multiplicities, denoted by $\Gamma(r)$. As a non--decorated graph $\Gamma(r)$ coincides with the graph $\Gamma$ without decorations. Additionally each vertex $v$ has a
  `multiplicity decoration' $r_v$, and we put also the self--intersection  decoration $E_v^2$
  whenever $r_v>1$. (Hence, the vertex $v$ does not inherit the self--intersection decoration
  of $v$ if $r_v=1$).
 Note that the  abstract 1--dimensional analytic space $Z(r)$ determines by its reduced structure
 the shape of the dual graph $\Gamma$, and by its non--reduced structure
 all the multiplicities $\{r_v\}_{v\in\calv}$, and additionally,
 all the self--intersection numbers $E_v^2$ for those $v$'s when  $r_v>1$
 (see \cite[Lemma 3.1]{LaDef1}).

 We say that the space $Z(r)$ has topological type $\Gamma(r)$.

Clearly, the analytic structure of $(X,o)$, hence of $\tX$ too, determines each 1--dimensional
non--reduced space $Z(r)$.  The converse is also true in the following sense.
\begin{theorem}\label{th:La1} \ \cite[Th. 6.20]{Lauferbook},\cite[Prop. 3.8]{LaDef1}
(a) Consider an abstract 1--dimensional space $Z(r)$, whose topological type
$\Gamma(r)$ can be completed to a negative definite graph $\Gamma$ (or, lattice $L$).
Then there exists a 2--dimensional
manifold $\tX$ in which $Z(r)$ can be embedded with support $E$
such that the intersection matrix inherited from the embedding $E\subset \tX$ is the
negative definite lattice $L$.
In particular (since by Grauert theorem \cite{GRa}
the exceptional locus $E$ in $\tX$ can be contracted to a normal singularity),
any such $Z(r)$ is always associated with a normal surface singularity (as above).

(b)  Suppose that we have two singularities $(X,o)$ and $(X',o)$ with good resolutions as above with the
same resolution graph $\Gamma$. Depending solely on $\Gamma$,
 the integers $\{r_v\}_v$ may be chosen so
large that if $\calO_{Z(r)}\simeq \calO_{Z'(r)}$, then $E\subset \tX$ and $E'\subset \tX'$ have
biholomorphically equivalent neighbourhoods via a map taking $E$ to $E'$.
(For a concrete estimate how large $r$ should be see Theorem 6.20 in \cite{Lauferbook}.)
\end{theorem}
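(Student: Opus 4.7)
For part (a), I would build $\tX$ by a direct plumbing/gluing construction that extends the given non--reduced scheme structure on $Z(r)$. Since $\Gamma$ is a tree of rational curves, I cover $E$ by local patches, one around each $E_v \cong \mathbb{P}^1$ and one small bidisc around each node $E_v\cap E_w$. A neighbourhood of an isolated $E_v$ is required to be (analytically isomorphic to) a disc bundle of degree $E_v^2$ over $\mathbb{P}^1$: when $r_v > 1$ the self--intersection is already recorded by $Z(r)$ via the first infinitesimal neighbourhood, and when $r_v = 1$ I impose the value $E_v^2$ prescribed by $\Gamma$. At each node $p \in E_v \cap E_w$ I pick local coordinates $(u,v)$ with $E_v = \{v=0\}$, $E_w=\{u=0\}$ compatible with the scheme structure of $Z(r)$ at $p$. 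Gluing these patches vertex by vertex along the tree (the tree condition eliminates cycle obstructions to coherent gluings) produces a 2--manifold $\tX$ in which $E$ sits with intersection matrix $L$. Grauert's criterion \cite{GRa} then contracts $E$ to a normal germ $(X,o)$.

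For part (b), the plan is to reduce, via Grauert's formal--analytic equivalence for neighbourhoods of exceptional loci, to the assertion that $\calO_{Z(r)}\simeq \calO_{Z'(r)}$ for one sufficiently large $r$ forces $\calO_{Z(r')}\simeq \calO_{Z'(r')}$ for all $r'\geq r$. I would establish this by induction, bumping up the multiplicities one vertex at a time. The obstruction to extending an isomorphism from $Z(r)$ to $Z(r)+E_v$ lives in $H^1$ of a conormal sheaf $\cali(r)/\cali(r)\cali_v$ supported on a component $E_w$, which after filtration decomposes into line bundles of the form $\calO_{E_w}(-\sum_u r_u (E_u,E_w))$. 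Because $(\,,\,)$ is negative definite, increasing the $r_u$'s drives each such degree to $-\infty$; standard vanishing on $\mathbb{P}^1$ then simultaneously kills both the $H^1$ carrying the obstruction and the $H^0$ controlling the automorphism ambiguity.

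The principal difficulty is ensuring that the threshold $r$ depends only on the combinatorics of $\Gamma$, not on the particular analytic structures $\tX, \tX'$. This requires quantifying, uniformly in analytic type, how large each $r_v$ must be so that every graded piece of the relevant conormal filtrations becomes sufficiently negative on every $E_w$ simultaneously. Such an effective bound can be extracted from the entries of the inverse of the intersection matrix together with $Z_K$, and is worked out explicitly in \cite[Th. 6.20]{Lauferbook}; for the applications in the rest of the paper only the qualitative existence of such an $r=r(\Gamma)$ is needed, so I would quote Laufer for the numerical value and concentrate the exposition on the inductive extension and the vanishing step.
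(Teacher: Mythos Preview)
The paper does not give its own proof of this statement: Theorem~\ref{th:La1} is quoted verbatim from Laufer (see the citations in the theorem header to \cite[Th.~6.20]{Lauferbook} and \cite[Prop.~3.8]{LaDef1}), and the text immediately continues with ``In particular, in the deformation theory of $\tX$\ldots'' without any proof environment. So there is nothing in the paper to compare your proposal against.

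That said, your sketch is a faithful outline of Laufer's original arguments. For (a), the plumbing of disc bundles over $\mathbb P^1$'s along a tree, with the self--intersections either read from $Z(r)$ (when $r_v>1$) or imposed from $\Gamma$ (when $r_v=1$), is exactly the construction in \cite[Prop.~3.8]{LaDef1}. For (b), your reduction via Grauert's formal principle and the inductive extension step---with obstructions in $H^1$ of conormal pieces on rational curves, killed by making the degrees sufficiently negative---matches the mechanism behind \cite[Th.~6.20]{Lauferbook}; and you correctly flag that the only delicate point is the uniformity of the threshold $r$ in $\Gamma$, which is precisely what Laufer's explicit bound provides. For the purposes of the present paper nothing more than the citation is needed.
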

In particular, in the deformation theory of $\tX$ it is enough to consider the deformations of
non--reduced spaces of type $Z(r)$.

Fix a non--reduced 1--dimensional space $Z=Z(r)$ with topological type $\Gamma(r)$.
Following Laufer and for technical reasons (partly motivated by further applications in the
forthcoming continuations of the series of manuscripts)
we also choose a closed subspace $Y$ of $Z$ (whose support can be smaller, it can be even empty).
More precisely, $(Z,Y)$ locally is isomorphic with $(\C\{x,y\}/(x^ay^b),\C\{x,y\}/(x^cy^d))$,
where $a\geq c\geq 0$, $b\geq d\geq 0$, $a>0$.
  The ideal of $Y$ in $\calO_Z$ is denoted by $\cali_Y$.
\begin{definition}\label{def:1}\ \cite[Def. 2.1]{LaDef1}
 A deformation of $Z$, fixing $Y$, consists of the following data:

(i) There  exists an analytic space $\calz$ and a proper map $\omegl:\calz\to Q$, where
$Q$ is a manifold containing a distinguished point $0$.

(ii) Over a point $q\in Q$ the fiber $Z_q$ is the subspace of $\calz$ determined by the ideal
sheaf $\omegl^* (\mathfrak{m}_q)$ (where $\mathfrak{m}_q$ is the maximal ideal of $q$). $Z$
is isomorphic with $Z_0$, usually they are  identified.

(iii) $\omegl$ is a trivial deformation of $Y$ (that is, there is a closed subspace
$\caly\subset \calz$ and the restriction of $\omegl$ to $\caly$ is a trivial deformation of $Y$).

(iv) $\omegl$ is {\it locally trivial} in a way which extends the trivial deformation $\omegl|_{\caly}$.
This means  that for ant $q\in Q$ and $z\in \calz$ there exist a neighborhood $W$ of $z$ in $\calz$,
a neighborhood $V$ of $z$ in $Z_q$, a neighborhood $U$ of $q$ in $Q$, and an isomorphism
$\phi:W\to V\times U$ such that $\omegl|_W=pr_2\circ \phi$ (compatibly with the trivialization
of $\caly$ from (iii)), where $pr_2$ is the second projection; for more see  [loc.cit.].
\end{definition}
One verifies that under deformations (with connected base space) the topological type of the fibers
$Z_q$,  namely $\Gamma(r)$, stays constant (see \cite[Lemma 3.1]{LaDef1}).

\begin{definition}\label{def:2}\ \cite[Def. 2.4]{LaDef1}
A deformation $\omegl:\calz\to Q$ of $Z$, fixing $Y$, is complete at $0$ if, given any deformation
$\tau:{\mathcal P}\to R$ of $Z$ fixing $Y$, there is a neighbourhood $R'$ of $0$ in $R$  and a
 holomorphic map $f:R'\to Q$ such that $\tau$ restricted to $\tau^{-1}(R')$ is the deformation
$f^*\omegl$. Furthermore, $\omegl$ is complete if it is complete at each point $q\in Q$.
\end{definition}
Laufer proved the following  results.
\begin{theorem}\label{th:La2}\ \cite[Theorems 2.1, 2.3, 3.4, 3.6]{LaDef1}
Let $\theta_{Z,Y}={\mathcal Hom}_{Z}(\Omega^1_Z,\cali_Y)$ be the sheaf of germs of vector fields on $Z$,  which vanish on $Y$, and let $\omegl :\calz\to Q$ be a deformation of $Z$, fixing $Y$.

 (a) If the Kodaira--Spencer map
 $\rho_0:T_0Q\to H^1(Z,\theta_{Z,Y})$ is surjective then $\omegl$ is complete at $0$.

 (b) If $\rho_0$ is surjective then $\rho_q$ is surjective for all $q$ sufficiently near to $0$.

 (c)  There exists a deformation $\omegl$ with $\rho_0$ bijective. In such a case in a neighbourhood $U$ of $0$ the deformation is essentially unique, and  the fiber above $q$ is isomorphic to $Z$
 for only at most countably many $q$ in $U$.
\end{theorem}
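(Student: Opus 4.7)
The plan is to follow the Kodaira--Spencer--Kuranishi framework, adapted here to deformations of the non--reduced $1$--dimensional space $Z$ that fix the closed subspace $Y$. The controlling sheaf for this relative problem is $\theta_{Z,Y}$: its $H^0$ parametrizes infinitesimal automorphisms of $Z$ restricting to the identity on $Y$, while $H^1$ parametrizes isomorphism classes of first--order deformations fixing $Y$, which is exactly what the Kodaira--Spencer map targets. The decisive structural fact is $\dim Z = 1$, which forces $H^i(Z,\theta_{Z,Y}) = 0$ for $i\geq 2$, so every obstruction that could appear in the inductive constructions below automatically vanishes.

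For (a), I would build the classifying map $f:R'\to Q$ order by order in the maximal ideal $\mathfrak{m}_0\subset \calO_{R,0}$. At first order, the Kodaira--Spencer map $\sigma_0:T_0R\to H^1(Z,\theta_{Z,Y})$ of $\tau$ factors through $\rho_0$ by surjectivity, providing a linear lift $df_0:T_0R\to T_0Q$. Inductively, assuming $f$ has been constructed modulo $\mathfrak{m}_0^{k+1}$ so that $\tau$ and $f^*\omegl$ are isomorphic to that order, the obstruction to extending the isomorphism to the next order is represented by a \v{C}ech $1$--cocycle with values in $\theta_{Z,Y}\otimes \mathfrak{m}_0^{k+1}/\mathfrak{m}_0^{k+2}$; its class in $H^1(Z,\theta_{Z,Y})$ can be cancelled by correcting the $(k+1)$--st jet of $f$, using surjectivity of $\rho_0$ once more. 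The main obstacle, as in every Kuranishi--style argument, is convergence of the resulting formal power series, which is handled by Cartan/Kodaira estimates on \v{C}ech cocycles relative to a fixed Stein cover of a neighbourhood of $E$; keeping $Y$ fixed introduces no new analytic difficulty, since the whole construction is carried out in the subcategory of deformations trivializing $\caly$ and with $\theta_Z$ replaced by $\theta_{Z,Y}$ throughout.

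For (b), I would globalize the pointwise Kodaira--Spencer maps to a morphism of coherent sheaves $\rho : TQ\to R^1\omegl_*\theta_{\calz/Q,\caly/Q}$ on $Q$ whose fibre at $q$ recovers $\rho_q$. Base change commutes with the formation of $R^1\omegl_*$ here because $H^2(Z_q,\theta_{Z_q,Y_q})=0$ by dimension, so this fibrewise identification is valid. Surjectivity of a morphism of coherent sheaves is an open condition on the base --- its cokernel is coherent and hence vanishes in a neighbourhood of any point where its stalk vanishes --- and this immediately yields (b).

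For (c), I would apply the Kuranishi construction with base $Q$ an open neighbourhood of $0$ in $H^1(Z,\theta_{Z,Y})$: since $H^2(Z,\theta_{Z,Y})=0$, no obstruction equations cut down $Q$, and one produces a deformation for which $\rho_0$ is the identity, in particular bijective. If $\omegl':\calz'\to Q'$ is another such, part (a) applied in both directions yields germs $f:(Q',0)\to (Q,0)$ and $g:(Q,0)\to (Q',0)$ pulling back the two deformations to each other; the relation $\rho_0\circ df_0=\rho_0'$ together with bijectivity of $\rho_0,\rho_0'$ forces $df_0,dg_0$ to be mutually inverse linear isomorphisms, so by the inverse function theorem $f$ and $g$ are inverse biholomorphisms near $0$, giving the essential uniqueness. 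Finally, the locus $\{q\in U : Z_q\simeq Z_0\}$ is an analytic subset of a small $U$; if it contained a positive--dimensional component through $0$, its tangent direction $v\in T_0Q$ would satisfy $\rho_0(v)=0$, contradicting injectivity. Hence the locus is $0$--dimensional and therefore, as a discrete analytic subset of a small neighbourhood, at most countable.
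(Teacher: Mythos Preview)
The paper does not prove this theorem: it is quoted verbatim from Laufer \cite[Theorems 2.1, 2.3, 3.4, 3.6]{LaDef1} and used as a black box. So there is no ``paper's own proof'' to compare against; your sketch is being measured against the standard Kodaira--Spencer--Kuranishi argument, which is indeed what Laufer carries out in \cite{LaDef1}. Parts (a) and (b) of your outline are accurate summaries of that argument, and your emphasis on the vanishing $H^2(Z,\theta_{Z,Y})=0$ (forced by $\dim Z=1$) as the reason obstructions disappear and base change behaves well is exactly the point.

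There is, however, a genuine gap in your treatment of the countability assertion in (c). You claim that the locus $\{q\in U: Z_q\simeq Z_0\}$ is an analytic subset, and that a tangent vector $v$ to a positive--dimensional component would satisfy $\rho_0(v)=0$. Neither step is justified. First, analyticity of the isomorphism locus is not automatic and requires an argument (e.g.\ via a relative Isom space). Second, and more seriously, ``all fibres isomorphic along a curve'' does \emph{not} imply ``Kodaira--Spencer class zero along that curve'': a family can have pairwise isomorphic fibres yet be non--trivial to first order whenever $H^0(Z,\theta_{Z,Y})\neq 0$, since the infinitesimal automorphisms then allow non--trivial regluings. What injectivity of $\rho_0$ actually buys is that the classifying map $f$ from any deformation is \emph{unique to first order}; combined with completeness at nearby points (your part (b)), Laufer shows that the set of $q$ with $Z_q\simeq Z$ is a countable union of analytic germs, each of which the first--order uniqueness forces to be a single point. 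You should replace your tangent--space argument by this two--step reasoning.
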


\bekezdes\label{bek:funk} {\bf Functoriality.} Let $Z'$ be a closed subspace of $Z$ such that
$\cali_{Z'}\subset \cali_Y\subset \calO_Z$. Then there is a natural reduction of pairs
$(\calO_Z,\calO_Y)\to (\calO_{Z'},\calO_Y)$. Hence, any deformation $\omegl:\calz\to Q$
of $Z$ fixing $Y$ reduces to a deformation $\omegl':\calz'\to Q$
of $Z'$ fixing $Y$. Furthermore, if $\omegl$ is complete then $\omegl'$ is automatically
complete as well (since $H^1(Z,\theta_{Z,Y})\to H^1(Z',\theta_{Z',Y})$ is onto).

\subsection{The `0--generic analytic structure'}\label{ss:0gen}
We wish to define when is the
 analytic structure of a  fiber  $Z_q$ $(q\in Q)$  of a deformation  `generic'.
 We proceed in two steps. The `0--genericity' is the first one (corresponding to the Chern
 class $l'=0$), which will be defined in this subsection.

 It is rather advantageous to set a definition, which is compatible with respect to all
  the restrictions $\calO_{Z}\to\calO_{Z'}$.
  In order to do this, let us fix
  the coefficients $\tir=\{\tir_v\}_v$ so large that for them Theorem \ref{th:La1} is valid.
  In this way basically we fix a resolution $(\tX,E)$ and some large infinitesimal
  neighbourhood $Z(\tir)$ associated with it. 
Moreover,  let us also fix a {\it complete}
 deformation $\omegl(\tir):\calz(\tir)\to Q$  whose fibers have the topological  type
 of $\Gamma(\tir)$.  Next, we consider all the other coefficient sets $r:=\{r_v\}_v$ such that
 $0\leq r_v\leq \tir_v$ for all $v$, not all $r_v=0$. Such a  choice,
 by restriction as in \ref{bek:funk}, automatically provides a
 deformation $\omegl(r):\calz(r)\to Q$.
Then set
\begin{equation}\label{eq:Delta0}
\Delta(0,r):=\{q\in Q\,:\,  \mbox{$h^i(Z(r)_q, \calO_{Z(r)_q})$ is not constant in a neighbourhood of
$q$ for some $i$}\}.
\end{equation}

Then $\Delta(0,r)$ is a  closed (reduced) proper subspace of $Q$, see \cite{Ri74,Ri76}
 (one can use also an argument similar to Lemma \ref{lem:semic} written for  $l'=0$). Define
$\Delta^0(\tir):= \cup_{ r_v\leq \tir_v}\Delta(0, r)$. Then
 $\Delta ^0(\tir)$ is also closed  and $\Delta^0(\tir)\not=Q$.
 \begin{definition}\label{def:GEN0}
 We say that the fiber $Z(\tir)_q$ of $\omegl(\tir):\calz(\tir)\to Q$ is 0--generic if
$q\in Q\setminus \Delta^0(\tir)$.
 \end{definition}
 Next, we wish to generalize this definition for all Chern classes $l'\in L' $, or, for all
 `natural line bundles', as generalizations of the trivial bundle corresponding to $l'=0$.
\subsection{Natural line bundles}\label{ss:natline}
Let us start  again with a good resolution $\phi:(\tX,E)\to (X,o)$
of a  normal surface singularity with rational homology sphere link,
and consider the cohomology exact sequence associated with the exponential exact sequence of sheaves
\begin{equation}\label{eq:exp}
0\to {\rm Pic}^0(\tX)\stackrel{\epsilon}{\longrightarrow}
{\rm Pic}(\tX)\stackrel{c_1}{\longrightarrow}H^2(\tX,\Z)\to 0.
\end{equation}
Here $c_1(\calL)\in H^2(\tX,\Z)=L' $ is the first Chern class of $\calL$.
Then, see e.g. \cite{OkumaRat,trieste},
there exists a unique homomorphism (split)
 $s:L'\to {\rm Pic}(\tX)$  of $c_1$   such that $c_1\circ s=id$ and
$s$ restricted to $L$ is $l\mapsto \calO_{\tX}(l)$.
The line bundles $s(l')$ are called {\it natural line
bundles } of $\tX$, and are denoted by $\calO_{\tX}(l')$. For several definitions of
them see  \cite{trieste}.
E.g., $\calL$ is natural if and only if one of its power has the form $\calO_{\tX}(l)$
for some {\it integral} cycle $l\in L$ supported on $E$.
Here  we recall another construction from \cite{OkumaRat,trieste},
which will be extended later to the deformations space of singularities.

Fix some $l'\in L'$ and let $n$ be the order of its class in $L'/L$.
Then $nl'$ is an integral cycle; its reinterpretation as a
 divisor supported on $E$ will be denoted by   ${\rm div}(nl')$.
We claim that there exists a divisor $D=D(l')$ in $\tX$ such that one has a linear equivalence
$nD\sim {\rm div}(nl')$ and $c_1(\calO_{\tX}(D))=l'$. Furthermore,
$D(l') $ is unique up to linear equivalence, hence
$l'\mapsto \calO_{\tX}(D(l'))$ is the wished split of (\ref{eq:exp}).
Indeed, since $c_1$ is onto, there exists a divisor $D_1$ such that $c_1(\calO_{\tX}(D_1))=l'$.
Hence $\calO_{\tX}(nD_1-{\rm div}(nl'))$ has the form $\epsilon(\calL)$ for some
$\calL\in {\rm Pic}^0(\tX)=H^1(\tX,\calO_{\tX})=\C^{p_g}$.
Define $D_2$ such that $\calO_{\tX}(D_2)=\frac{1}{n}\calL$ in
$H^1(\tX,\calO_{\tX})$. Then $D_1-D_2$ works. The uniqueness  follows from the fact that
${\rm Pic}^0(\tX)$ is torsion free.

The following warning is appropriate.
Note that if $\tX_1$ is a connected small convenient  neighbourhood
of the union of some of the exceptional divisors (hence $\tX_1$ also stays as the resolution
of the singularity obtained by contraction of that union of exceptional  curves) then one can repeat the definition of
natural line bundles at the level of $\tX_1$ as well. However,  the restriction to
$\tX_1$ of a natural line bundle of $\tX$ (even of type
$\calO_{\tX}(l)$ with $l$ integral cycle supported on $E$)  usually is not natural on $\tX_1$:
$\calO_{\tX}(l')|_{\tX_1}\not= \calO_{\tX_1}(R(l'))$
 (where $R:H^2(\tX,\Z)\to H^2(\tX_1,\Z)$ is the natural restriction), though their Chern classes coincide.

In the sequel we will deal with the family of `restricted natural line bundles'
 obtained by {\it restrictions
of $\calO_{\tX}(l')$}. Even if we need to descend to a `lower level' $\tX_1$ with smaller
exceptional curve, or to any cycle $Z$ with support included in $E$ (but not necessarily $E$)
our `restricted natural line bundles' will be associated with Chern classes  $l'\in L'=L'(\tX)$
via the restrictions $\pic(\tX)\to \pic(\tX_1)$ or $\pic(\tX)\to \pic(Z)$ of bundles of type
$\calO_{\tX}(l') \in {\rm Pic}(\tX)$.
This basically means that we fix a tower of singularities $\{\tX_1\}_{\tX_1\subset \tX}$,
or $\{\calO_Z\}_{|Z|\subset E}$, determined by the `top level' $\tX$, and all the restricted
natural line bundles,
even at intermediate levels, are restrictions  from the top level.

We use the notations $\calO_{\tX_1}(l'):=\calO_{\tX}(l')|_{\tX_1}$ and $\calO_Z(l'):=\calO_{\tX}(l')|_Z$ respectively.

\subsection{The universal family of natural line bundles}\label{bek:3.3.4}
Next, we wish to extend the definition of the line bundles
$\calO_{Z}(l')$ to the total space of a deformation (at least locally,  over small balls
in the complement of $\Delta^0(\tir)$).

We fix some $Z=Z(\tir)$ with all $\tir_v\gg0$, supported on $E$, such that Theorem
\ref{th:La1} is valid (similarly as in \ref{ss:0gen}). Fix also some $Y\subset Z$,
 and a complete deformation $\omegl:\calz(\tir)\to Q$ of $(Z,Y)$ as in Definition \ref{def:1}
such that all the fibers have the same fixed topological type $\Gamma(\tir)$.
We consider the discriminant $\Delta^0(\tir) \subset Q$, and we fix some
$q_0\in Q\setminus \Delta^0(\tir)$,
and a small ball $U$,  $q_0\in U\subset Q\setminus \Delta^0(\tir)$.
Above $U$ the topologically trivial family of irreducible
exceptional curves
form the irreducible divisors $\{{\cale}_v\}_v$, such that ${\cale}_v$ above any point $q\in U$ is the
corresponding irreducible exceptional curve $E_{v,q}$
of $\tX_q$. With the notations of the previous paragraph, if $nl'$
has the form
$\sum_vn_vE_v$ write ${\rm div}_{\omegl}(nl'):=\sum _vn_v{\cale}_v$ for the corresponding divisor
in $\omegl^{-1}(U)$. Since $U$ is contractible, one has $H^2(\omegl^{-1}(U),\Z)=L'$
and $H^1(\omegl^{-1}(U),\Z)=0$,
hence  the exponential exact sequence on $\omegl^{-1}(U)$ gives
  \begin{equation}\label{eq:exp2}
0\to {\rm Pic}^0(\omegl^{-1}(U))\longrightarrow
{\rm Pic}(\omegl^{-1}(U))\stackrel{c_1}{\longrightarrow}L' \to H^2(\omegl^{-1}(U), \calO_{\omegl^{-1}(U)}).
\end{equation}
 \begin{lemma}\label{lem:LerayPic} $H^2(\omegl^{-1}(U), \calO_{\omegl^{-1}(U)})=0$ and
the first Chern class  morphism $c_1$ in (\ref{eq:exp2})
 is onto.
 \end{lemma}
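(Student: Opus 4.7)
The plan is to reduce both claims of the lemma to a single coherent-cohomology vanishing, obtained from the Leray spectral sequence for the proper map $\omegl|_{\omegl^{-1}(U)}:\omegl^{-1}(U)\to U$. Concretely, the surjectivity of $c_1$ follows mechanically from $H^2(\omegl^{-1}(U),\calO_{\omegl^{-1}(U)})=0$ via the exact sequence (\ref{eq:exp2}), so essentially the only thing that needs to be established is this vanishing.

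First, I would observe that $U$ is Stein, being an open ball inside the complex manifold $Q$. Cartan's Theorem B then gives $H^p(U,\calF)=0$ for every coherent $\calF$ on $U$ and $p\geq 1$. Combined with the coherence of the higher direct images $R^q\omegl_*\calO_{\omegl^{-1}(U)}$ (supplied by Grauert's direct image theorem, whose hypotheses are met because $\omegl$ is proper by Definition~\ref{def:1}(i)), the Leray spectral sequence
\[
E_2^{p,q}=H^p\bigl(U,R^q\omegl_*\calO_{\omegl^{-1}(U)}\bigr)\ \Longrightarrow\ H^{p+q}\bigl(\omegl^{-1}(U),\calO_{\omegl^{-1}(U)}\bigr)
\]
collapses to give $H^n(\omegl^{-1}(U),\calO)\simeq H^0(U,R^n\omegl_*\calO)$ for every $n$.

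Next, I would analyse the fibres of $R^q\omegl_*\calO$ for $q\geq 2$. Each fibre $Z(\tir)_q$ is a (possibly non-reduced) compact $1$-dimensional analytic space supported on the topological model of $E$, so the Cartan--Serre vanishing above the complex dimension gives $H^q(Z(\tir)_q,\calO_{Z(\tir)_q})=0$ for all $q\geq 2$. Feeding this pointwise vanishing into Grauert's base change theorem (applied to the coherent sheaves $R^q\omegl_*\calO$) forces $R^q\omegl_*\calO_{\omegl^{-1}(U)}=0$ on $U$ for every $q\geq 2$. Specialising to $q=2$ yields the first claim $H^2(\omegl^{-1}(U),\calO_{\omegl^{-1}(U)})=0$, and the surjectivity statement follows at once from (\ref{eq:exp2}) since the cokernel of $c_1$ injects into this now vanishing group.

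The main obstacle is essentially bookkeeping: one must verify that Grauert's coherence and base change theorems apply in the present setup of a proper, flat-enough family of non-reduced analytic curves over a smooth base, and that the identification $H^2(\omegl^{-1}(U),\Z)=L'$ used in (\ref{eq:exp2}) is consistent with the Chern-class map appearing there. The geometric heart of the argument, namely that $1$-dimensional compact (even non-reduced) analytic fibres carry no higher coherent cohomology beyond degree $1$, is straightforward and requires no genericity or deformation-theoretic input.
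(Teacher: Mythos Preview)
Your argument is correct and follows essentially the same route as the paper: both compute $H^2(\omegl^{-1}(U),\calO)$ via the Leray spectral sequence for $\omegl$, kill the $E_2^{p,q}$ terms with $p>0$ using that $U$ is a ball, and kill $R^2\omegl_*\calO$ by base change together with the $1$--dimensionality of the fibers. The only cosmetic difference is that the paper invokes the local freeness of $R^i\omegl_*\calO$ (coming from the constancy of $h^i$ on $U\subset Q\setminus\Delta^0(\tir)$) to get $H^{>0}(U,R^i\omegl_*\calO)=0$, whereas you use Grauert coherence plus Cartan's Theorem~B; your version is marginally cleaner since it does not actually need the hypothesis $U\subset Q\setminus\Delta^0(\tir)$.
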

 \begin{proof}
 We use the Leray spectral sequence.
 Recall, see e.g. EGA III.2 \S 7, or \cite{osserman},
that if $q\mapsto h^i(Z(\tir)_q, \calO_{Z(\tir)_q})$ is constant over
some open set $U$ (and all $i$) then
$R^i \omegl(\tir)_*\calO_{\calz(\tir)}$ is locally free over $U$ and
$R^i \omegl(\tir)_*\calO_{\calz(\tir)}\otimes _{\calO_U}\C(q)\to H^i(Z(\tir)_q, \calO_{Z(\tir)_q})$
is an isomorphism for $q\in U$.

Hence, since $R^i\omegl_*\calO_{\omegl^{-1}(U)}$ is locally free,
$H^i(U, R^{2-i}\omegl_*\calO_{\omegl^{-1}(U)})=0$ for $i>0$.
 On the other hand, $R^2\omegl_*\calO_{\omegl^{-1}(U)}=0$ since
 $R^2\omegl_*\calO_{\omegl^{-1}(U)}\otimes _{\calO_{U}}\C(q)\to H^2(Z(\tir)_q, \calO_{Z(\tir)_q})$ is an
 isomorphism and  $H^2(Z(\tir)_q, \calO_{Z(\tir)_q})=0$  by dimension argument.
 \end{proof}

Then,  if in the above  construction of the split of $c_1$ in (\ref{eq:exp})
we replace $\tX$  by $\omegl^{-1}(U)$ and ${\rm  div }(nl')$ by
${\rm div}_{\omegl}(nl')$,  we get the following statement.
\begin{lemma}\label{lem:natline}
For any $l'\in L'$
there exists a divisor $D_\omegl(l')$ in $\omegl^{-1}(U)$ such that one has a linear equivalence
$nD_\omegl(l')\sim {\rm div}_\omegl(nl')$ in $\omegl^{-1}(U)$
 and $c_1(\calO_{\omegl^{-1}(U)}(D_\omegl(l'))=l'$. Furthermore, $D_\omegl(l') $ is unique up to linear equivalence, hence $l'\mapsto \calO_{\omegl^{-1}(U)}(D_\omegl(l'))$ is a split of (\ref{eq:exp2})
 which extends the natural split $L\ni \sum_vm_vE_v\mapsto \calO_{\omegl^{-1}(U)}(\sum_v
 m_v{\cale}_v)$ over $L$. Since  ${\rm Pic}^0(\omegl^{-1}(U))=H^1 (\omegl^{-1}(U),
 \calO_{\omegl^{-1}(U)})$ is torsion free,
 there exists a unique split over $L'$ with this extension property.
\end{lemma}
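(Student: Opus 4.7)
The plan is to imitate verbatim the construction of natural line bundles on a single resolution $\tX$ given in \S\ref{ss:natline}, reading the total space $\omegl^{-1}(U)$ in place of $\tX$ and the relative divisors $\cale_v$ in place of $E_v$. The role of ${\rm Pic}^0(\tX)\cong H^1(\tX,\calO_{\tX})$ being a torsion-free divisible complex vector space is played by ${\rm Pic}^0(\omegl^{-1}(U))\cong H^1(\omegl^{-1}(U),\calO_{\omegl^{-1}(U)})$, the isomorphism coming from the exponential sequence (\ref{eq:exp2}) together with $H^1(\omegl^{-1}(U),\Z)=0$; the surjectivity of $c_1$ in (\ref{eq:exp2}) is exactly Lemma~\ref{lem:LerayPic}.

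First, given $l'\in L'$ of order $n$ in $L'/L$, the cycle $nl'\in L$ defines the honest divisor $\mathrm{div}_\omegl(nl')=\sum_v (nl')_v\cale_v$. By Lemma~\ref{lem:LerayPic} I pick any $D_1$ in $\omegl^{-1}(U)$ with $c_1(\calO_{\omegl^{-1}(U)}(D_1))=l'$; then $\calO(nD_1-\mathrm{div}_\omegl(nl'))$ lies in ${\rm Pic}^0(\omegl^{-1}(U))$. Using divisibility of the complex vector space $H^1(\omegl^{-1}(U),\calO_{\omegl^{-1}(U)})$ I divide this class by $n$, represent it as $\calO(D_2)$ for some divisor $D_2$, and set $D_\omegl(l'):=D_1-D_2$. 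A direct check gives $c_1(\calO(D_\omegl(l')))=l'$ and $nD_\omegl(l')\sim \mathrm{div}_\omegl(nl')$ in $\omegl^{-1}(U)$.

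Next, uniqueness up to linear equivalence is immediate: if $D,D'$ both satisfy the defining conditions, then $\calO(D-D')\in {\rm Pic}^0$ is $n$-torsion, hence trivial by torsion-freeness. To see that $l'\mapsto \calO(D_\omegl(l'))$ is a group homomorphism splitting $c_1$, I compare, for $l'_1,l'_2\in L'$ and a common multiple $n$ of the orders of $l'_1$, $l'_2$ and $l'_1+l'_2$, the line bundles $\calO(D_\omegl(l'_1+l'_2))$ and $\calO(D_\omegl(l'_1))\otimes\calO(D_\omegl(l'_2))$: both have Chern class $l'_1+l'_2$, and the $n$-th tensor power of each equals $\calO(\mathrm{div}_\omegl(n(l'_1+l'_2)))$, so the uniqueness just proved forces them to coincide. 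For $l'=\sum_v m_v E_v\in L$ the case $n=1$ with $D_1=\sum_v m_v\cale_v$ applies, yielding the asserted extension. Finally, if $s_1,s_2$ are two splits of $c_1$ extending the natural split on $L$, then for $l'\in L'$ of order $n$ we have $s_1(l')\otimes s_2(l')^{-1}\in{\rm Pic}^0$ and $(s_1(l')\otimes s_2(l')^{-1})^{\otimes n}\cong s_1(nl')\otimes s_2(nl')^{-1}\cong\calO$, hence $s_1(l')\cong s_2(l')$ by torsion-freeness.

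The main obstacle is not really substantive here: everything geometric has already been packaged into Lemma~\ref{lem:LerayPic} and into the identification of ${\rm Pic}^0(\omegl^{-1}(U))$ with a torsion-free divisible complex vector space. What remains is the clean bookkeeping of Chern classes and of which integer $n$ is used at each step, but since the vector space structure of ${\rm Pic}^0$ makes every finite collection of $n$-th root choices compatible up to linear equivalence, this is routine.
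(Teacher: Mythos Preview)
Your proof is correct and follows exactly the approach the paper intends: the paper's own justification for this lemma is simply the sentence preceding it, namely that one repeats verbatim the construction of the split in \S\ref{ss:natline} with $\tX$ replaced by $\omegl^{-1}(U)$ and ${\rm div}(nl')$ by ${\rm div}_\omegl(nl')$, using Lemma~\ref{lem:LerayPic} for the surjectivity of $c_1$. You have in fact written out more details (the explicit verification of additivity and of the uniqueness of the split) than the paper provides.
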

Let us summarize what we obtained: For any  $q_0\in  Q\setminus \Delta^0(\tir)$, and
small ball $U$ with  $q_0\in U\subset Q\setminus \Delta^0(\tir)$,
 we have defined for each $l'\in L'$ a line bundle $\calO_{\omegl^{-1}(U)}(D_\omegl(l'))$
in ${\rm Pic}(\omegl^{-1}(U))$, such that its restriction  to each fiber $Z(\tir)_q$  is the line bundle
$\calO_{Z(\tir)_q}(l')$. Let us denote it by
  $\calO_{\omegl^{-1}(U)}(l')$.

\subsection{The semicontinuity of $q\mapsto h^1(Z_q,\calO_{Z_q}(l'))$} \label{ss:semic}
We fix a complete deformation $\lambda: \calz(\tir)\to Q$,
and we consider the
set of multiplicities $r_v\leq \tir_v$, not all zero, as in \ref{ss:0gen}.
Then, for each $r$,  we have a restricted
deformation $\omegl(r):\calz(r)\to Q$ of $Z(r)$  as in \ref{bek:3.3.4}.
\begin{lemma}\label{lem:semic} For any restricted natural line bundle the map
$q\mapsto h^i(Z(r)_q,\calO_{Z(r)_q}(l'))$ is semicontinuous over $Q\setminus \Delta^0(\tir)$,
for  $i=0,1$.
\end{lemma}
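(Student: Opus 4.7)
The plan is to verify semicontinuity locally on $Q \setminus \Delta^0(\tir)$, where Lemma \ref{lem:natline} already supplies a coherent family of natural line bundles over the total space, and then to invoke the classical upper semicontinuity theorem for cohomology in a proper flat family (EGA III.2, 7.7.5, or Grauert's theorem). Since semicontinuity is a local property on the base, it suffices to handle small open neighbourhoods separately.

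Concretely, I would fix an arbitrary $q_0 \in Q \setminus \Delta^0(\tir)$ and a small contractible ball $U \subset Q \setminus \Delta^0(\tir)$ containing $q_0$. By Lemma \ref{lem:natline} applied to the complete deformation $\omegl(\tir):\calz(\tir) \to Q$ there is a line bundle $\calO_{\omegl(\tir)^{-1}(U)}(l')$ on $\omegl(\tir)^{-1}(U)$ whose restriction to each fibre $Z(\tir)_q$ is the natural line bundle $\calO_{Z(\tir)_q}(l')$ in the sense of Subsection \ref{ss:natline}. For any $r$ with $0 \leq r_v \leq \tir_v$ and not all $r_v = 0$, the ideal inclusion $\cali(\tir) \subset \cali(r)$ realises $\calz(r)$ as a closed subspace of $\calz(\tir)$ over $Q$. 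Pulling $\calO_{\omegl(\tir)^{-1}(U)}(l')$ back along the closed immersion $\omegl(r)^{-1}(U) \hookrightarrow \omegl(\tir)^{-1}(U)$ produces a line bundle $\calL$ on $\omegl(r)^{-1}(U)$ whose fibre over each $q \in U$ is precisely the restricted natural line bundle $\calO_{Z(r)_q}(l')$; this compatibility is read off from the uniqueness statement of Lemma \ref{lem:natline} together with the functoriality of restriction along closed embeddings described in Subsection \ref{ss:natline}.

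To invoke the semicontinuity theorem I would then check that $\omegl(r)\colon\omegl(r)^{-1}(U) \to U$ is proper and flat and that $\calL$ is coherent and flat over $U$. Properness follows from Definition \ref{def:1}(i); flatness of $\omegl(r)$ is an immediate consequence of local triviality (Definition \ref{def:1}(iv)), since local products project flatly; and $\calL$, being locally free of rank one, is certainly flat over $U$. The standard upper semicontinuity theorem (EGA III.2, 7.7.5) therefore gives that $q \mapsto h^i(Z(r)_q, \calO_{Z(r)_q}(l'))$ is upper semicontinuous on $U$ for every $i$, and in particular for $i=0,1$. Since $q_0$ was arbitrary, this yields semicontinuity on all of $Q \setminus \Delta^0(\tir)$.

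The main difficulty, already disposed of by Lemma \ref{lem:natline}, is the coherent assembly of the fibrewise natural line bundles into a single locally free sheaf on the total space of the deformation; this is precisely why the statement is restricted to the complement of $\Delta^0(\tir)$, where the base change vanishing used in Lemma \ref{lem:LerayPic} is available. Once this family is in hand, the lemma is a formal application of standard cohomology-and-base-change.
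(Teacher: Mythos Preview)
Your proof is correct and takes a genuinely different route from the paper's argument. You assemble the universal family of restricted natural line bundles (via Lemma \ref{lem:natline} and restriction along the closed embedding $\calz(r)\hookrightarrow\calz(\tir)$), verify properness and flatness of $\omegl(r)$ from the local triviality in Definition \ref{def:1}, and then apply the standard upper semicontinuity theorem for cohomology in a proper flat family. This is clean and efficient.

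The paper instead argues by hand: it writes down the Mayer--Vietoris sequence associated with the decomposition of $Z(r)_q$ into the pieces $r_vE_{v,q}$, uses tautness of a single rational curve (so that each $r_vE_{v,q}$ and the bundle on it can be trivialised in the family), and reduces the question to the semicontinuity of the rank of an explicit map $\delta(q)$ between \emph{fixed} finite-dimensional vector spaces. What this buys is an explicit presentation of the $q$--dependence as the rank of a concrete matrix, which makes the jump locus very tangible; your black-box approach is shorter but hides this structure. Both are valid, and given that the universal bundle has already been constructed in \S\ref{bek:3.3.4}, your direct appeal to Grauert/EGA semicontinuity is arguably the more natural conclusion.
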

\noindent
(Note that if each $r_v>1$ then the
 intersection form on $\Gamma(r)$  is well--defined. In particular,
 the semicontinuities of $h^0$ and $h^1$ are equivalent,
since $h^0-h^1=(Z(r),l')+\chi(Z(r))$ by Riemann--Roch.)
 \begin{proof}
 We fix a small ball $U$ in  $Q\setminus \Delta^0(\tir)$ as in subsection \ref{bek:3.3.4},
 and we run $q\in U$.

 Let us denote (as above)
 the exceptional curves in the fiber $\omegl(r)^{-1}(q)$ by $\{E_{v,q}\}_v$,
 hence the cycle  $Z(r)_q$ is $\sum_vr_vE_{v,q}$.
 Then one has the short exact sequence of sheaves
 $$0\to \calO_{Z(r)_q}\otimes \calO_{\omegl^{-1}(U)}(l')\to \oplus_v \calO_{r_vE_{v,q}}\otimes \calO_{\omegl^{-1}(U)}(l')
 \to \oplus_{(v,w)} \C\{x,y\}/(x^{r_v}y^{r_w})\to 0,$$
 where the sum in the last term runs over the edges $(v,w)$ of $\Gamma(r)$.
 This gives  the Mayer--Vietoris exact sequence
 \begin{equation*}\label{eq:MV}
0\to
H^0(Z(r)_q,\calO_{\omegl^{-1}(U)}(l')|_{Z(r)_q}) \to
\oplus_v H^0(r_vE_{v,q},\calO_{\omegl^{-1}(U)}(l')|_{r_vE_{v,q}})
\stackrel{\delta}{\longrightarrow}
  \oplus_{(v,w)} \C\{x,y\}/(x^{r_v}y^{r_w}) \to \ldots
  \end{equation*}
Next, we analyse the vector space $H^0(r_vE_{v,q},\calO_{\omegl^{-1}(U)}(l')|_{r_vE_{v,q}})$ for any  $v$. Let us fix an arbitrary $q_0\in U$.
Note that a singularity with a resolution consisting only one rational irreducible
divisor is taut, see \cite{Laufer73}, hence the analytic family $\{Z(\tir)_q\}_q$
restricted to $\{r_vE_{v,q}\}_v$
over a small neighbourhood $U'\subset U $ of
$q_0$ can be trivialized. Furthermore, ${\rm Pic }^0(r_vE_{v,q})=0$, hence the line bundle
$\calO_{\omegl^{-1}(U)}(l')|_{r_vE_{v,q}}$ is uniquely determined topologically by $l'$ and $r$.
Hence, $\calO_{\omegl^{-1}(U)}(l')|_{r_vE_{v,q}}$ also can be trivialised over a small $U'$.
In particular, by these trivializations,
$H^0(r_vE_{v,q},\calO_{\omegl^{-1}(U)}(l')|_{r_vE_{v,q}})$ can be replaced by the fixed
  $H^0(r_vE_{v,q_0},\calO_{\omegl^{-1}(U)}(l')|_{r_vE_{v,q_0}})$, and the $q$--dependence is codified in the
restriction   morphism $\delta$. Hence, there exists a morphism
   \begin{equation}\label{eq:MV2}
 \oplus_v H^0(r_vE_{v,q_0},\calO_{\omegl^{-1}(U)}(l')|_{r_vE_{v,q_0}})
\stackrel{\delta(q)}{\longrightarrow}
  \oplus_{(v,w)} \C\{x,y\}/(x^{r_v}y^{r_w})
  \end{equation}
  whose kernel is
$H^0(Z(r)_q,\calO_{Z(r)_q}(l'))$.
Since the rank of $\delta(q)$ is semicontinuous, the statement follows for $h^0$.
But   $h^1(Z(r)_q,\calO_{Z(r)_q}(l'))=\dim {\rm coker}(\delta(q))+
h^1(r_vE_{v,q},\calO_{\omegl^{-1}(U)}(l')|_{r_vE_{v,q}})$, and the second term in this last sum  is also
topological and constant (by the same argument as above), hence semicontinuity for $h^1$ follows as well.
 \end{proof}

 \subsection{The `generic analytic structure'}\label{ss:gen}
 Now we are ready to give the definition of the   `generic structure'.   Let us  fix a {\it complete}
 deformation $\omegl(\tir):\calz(\tir)\to Q$  as in \ref{ss:0gen} (with $\tir_v$ large)
 whose fibers have the topological  type
 of $\Gamma(\tir)$.  Similarly as there,
  we consider all the other coefficient sets $r:=\{r_v\}_v$ such that
 $r_v\leq \tir_v$ for all $v$, not all zero, and the induced
 deformations $\omegl(r):\calz(r)\to Q$.
Then  for any $l'\in L'$ consider
 \begin{equation}\label{eq:MIN}
 {\rm MIN}(l',r):= \min_{q\in Q\setminus \Delta^0(\tir)}\{h^1(Z(r)_q,
 \calO_{Z(r)_q}(l'))\}\end{equation}
 and
 \begin{equation}\label{eq:Delta}
  \Delta(l',r):= \mbox{closure of}\ \{q\in Q\setminus \Delta^0(\tir)
  \,:\, h^1(Z(r)_q,\calO_{Z(r)_q}(l'))> {\rm MIN}(l',r)\}.
  \end{equation}
 Then $\Delta(l',r)$ is a  closed (reduced) proper subspace of $Q$
 (for this use e.g.
 an argument as in the proof of Lemma \ref{lem:semic}, or  \cite{Ri74,Ri76}).
Then set the countable union of closed  proper subspaces
$\Delta(\tir):=(\cup_{l'\in L'}\ \cup_{r_v\leq \tir_v}\Delta(l',r))\cup \Delta^0(\tir)$.
Clearly, $\Delta (\tir)\subsetneqq Q$.
\begin{definition}\label{def:GEN} (a)
For a fixed $\Gamma(\tir)$ and for any complete deformation
$\omegl(\tir):\calz(\tir)\to Q$ (with all $\tir_v\gg 0$)
 we say that the fiber $Z(\tir)_q$ of $\omegl(\tir):\calz(\tir)\to Q$ is generic if
$q\in Q\setminus \Delta(\tir)$.

(b) Consider a singularity $(X,o)$  and one of its resolutions
$\tX$ with dual graph $\Gamma$. We say that the analytic type on
$\tX$ is generic if there exists $\tir\gg 0$,
 and a complete deformation $\omegl(\tir):\calz(\tir)\to Q$ with fibers of topological type $\Gamma(\tir)$,
and   $q\in Q\setminus \Delta(\tir)$ such that  $\omegl(\tir)^{-1}(q)=\calO_{\tX}|_{ \sum_v \tir_vE_v}$.

\end{definition}
\begin{remark}\label{rem:generic}
(a) Fix any 1--dimensional space $Z$ with fixed topology $\Gamma(\tir)$
with all $\tir_v\gg 0$.
Then in any complete deformation $\omegl$ of $Z$ there exists a generic structure arbitrary close
to $Z$.

(b) Though the above construction  does not automatically imply that
$Q\setminus \Delta(\tir)$ is open, for any $q_0\in Q\setminus \Delta(\tir)$ and for any
{\it finite } set $FL'\subset L'$ there exists a small neighbourhood $U$ of $q_0$ such  that
$ h^1(\calO_{Z(r)_q},\calO_{Z(r)_q}(l'))= {\rm MIN}(l',r)$ for any
$r$ (as above),  $l'\in FL'$, and $q\in U$.

(c) 
Fix  a complete deformation $\omegl:\calz(\tir)\to Q$ of some $(Z,Y)$ with some fixed
 $\tir_v\gg0$ as above.
Then, by Theorem \ref{th:La1}{\it (b)} for any $q\in Q$ the fiber $Z(\tir)_q$
determines uniquely a holomorphic
neighborhood  $\tX_q$ of   $E$.  (Some  $\{\tir_v\}_v$ very large
works uniformly for all fibers, since a convenient $\{\tir_v\}_v$ can be chosen topologically.)
Furthermore,  $h^1(\tX_q,\calO_{\tX_q})$ can be
recovered from $\omegl$
as  $h^1(Z(\tir)_q,\calO_{Z(\tir)_q})$ by the formal function theorem.
This is the geometric genus of the singularity $(X_q,o)$ obtained by contracting $E$ in this $\tX_q$.
Since  $\Delta(0,\tir)=
\{q\in Q\,:\, p_g(X_q,o)={\rm MIN}(\Gamma)\}$ is part of the discriminant $\Delta(\tir)$ (and it is closed),
for any `generic' $q\in Q\setminus \Delta (\tir)$ there is a ball
$q\in U\subset Q\setminus \Delta (0,\tir)$
such that $\omegl$ simultaneously blows down to a flat family
${\mathcal X}\to U$. This follows from \cite{Ri74,Ri76,Wa76} by the constancy of $\Gamma$ and $p_g$.
\end{remark}
\subsection{Extension of  sections.} \label{ss:extsec}
 Consider a complete deformation $\lambda(\tir):{\mathcal Z}(\tir)\to Q$
as above, and {\it let $Z(\tir)_q$  be a generic fiber} as in Definition \ref{def:GEN}. Let
$U$ be a small neighbourhood of $q$ such that $U\subset Q\setminus \Delta^0(\tir)$.
For any $l'\in L'$ fixed consider the universal family of line bundles
$\calO_{\lambda^{-1}(U)}(D_\lambda (l'))$ constructed in subsection \ref{bek:3.3.4}.
Fix also some  $r:=\{r_v\}_v$ ($0\leq r_v\leq \tir_v$ for all $v$, not all $r_v=0$, as above).
Assume that $\calO_{Z(r)_q}(l')=\calO_{\lambda^{-1}(U)}(D_\lambda (l'))|_{Z(r)_q}$
admits a global section $s\in H^0(Z(r)_q,\calO_{Z(r)_q}(l'))$  without fixed components.
\begin{lemma}\label{lem:extension}\ After decreasing $U$ if it necessary, the following facts hold:

(a) the section $s$ has an extension
${\mathfrak s}\in H^0(\lambda(r)^{-1}(U),\calO_{\lambda(r)^{-1}(U)}(D_\lambda (l'))$ with
${\mathfrak s}_q=s$.

(b) ${\mathfrak s}_{q'}$ ($q'\in U,\  q'\not=q$) has no fixed components either.
\end{lemma}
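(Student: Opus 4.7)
My approach is to deduce both parts from Grauert's base-change theorem. For (a) the point is that, once the relevant cohomology dimensions are constant on a neighborhood of $q$, the pushforward of the universal line bundle becomes locally free, so sections on the central fiber extend. For (b) the point is that ``$E_{v,q'}$ is a fixed component of $\mathfrak{s}_{q'}$'' defines a closed condition on the base, which near $q$ must be empty.

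For part (a), I first shrink $U$ so that $q' \mapsto h^1(Z(r)_{q'}, \calO_{Z(r)_{q'}}(l'))$ is constant on $U$. This is possible because $q \in Q \setminus \Delta(\tir)$ implies $q \notin \Delta(l',r)$, so $h^1$ attains its minimum at $q$, while by Lemma \ref{lem:semic} this integer-valued function is upper semicontinuous on $Q\setminus\Delta^0(\tir)\supset U$, hence locally constant where it equals its minimum. Since $\Gamma(r)$ is topologically constant in the family, Riemann--Roch forces $h^0$ to be constant on $U$ too. The morphism $\lambda(r)\colon\lambda(r)^{-1}(U)\to U$ is proper and flat by the local triviality of Definition \ref{def:1}(iv), and the line bundle $\calO_{\lambda^{-1}(U)}(D_\lambda(l'))$ restricts to a flat coherent sheaf on $\lambda(r)^{-1}(U)$. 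Grauert's theorem then gives that $\lambda(r)_*\calO_{\lambda^{-1}(U)}(D_\lambda(l'))$ is locally free on $U$ with formation compatible with base change, so the fiber restriction $H^0(\lambda(r)^{-1}(U),\calO_{\lambda^{-1}(U)}(D_\lambda(l'))) \to H^0(Z(r)_q,\calO_{Z(r)_q}(l'))$ is surjective. Pick any lift $\mathfrak{s}$ of $s$.

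For part (b), fix $v\in\calv$ and let $\cale_v\subset\lambda^{-1}(U)$ be the divisor from \S\ref{bek:3.3.4}. By the exact sequence recalled in \S\ref{ss:Integ}, $\mathfrak{s}_{q'}$ admits $E_{v,q'}$ as a fixed component iff the image of $\mathfrak{s}_{q'}$ in $H^0(E_{v,q'},\calO_{Z(r)_{q'}}(l')|_{E_{v,q'}})$ vanishes. Repeating the Grauert argument with the coefficient vector supported at $v$ with value $1$ (so that $Z(r)_{q'}$ is replaced by the smooth rational curve $E_{v,q'}$, and the cohomology dimensions are manifestly topological, being determined by $E_{v,q'}^2$ and $l'$), one sees that $\lambda_*(\calO_{\lambda^{-1}(U)}(D_\lambda(l'))|_{\cale_v})$ is locally free on $U$. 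The image of $\mathfrak{s}$ under the natural restriction morphism is then a holomorphic section of this locally free sheaf, and its vanishing locus $F_v\subset U$ is closed. By hypothesis on $s$ we have $q\notin F_v$, so after further shrinking $U$ we may assume $F_v=\emptyset$. The finiteness of $\calv$ lets us carry out this shrinking for all $v$ simultaneously.

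The main technical step is the verification of the hypotheses of Grauert's theorem, which hinges on the constancy of the fiberwise cohomology. This is precisely where the genericity $q\notin\Delta(\tir)$ is indispensable: without it, $h^1$ could jump in arbitrarily small neighborhoods and neither the base-change statement in (a) nor the openness in (b) would be available. It is also essential that $\calO_{\lambda^{-1}(U)}(D_\lambda(l'))$ is a genuine global line bundle on the total space, as constructed in \S\ref{bek:3.3.4}, so that Grauert has something coherent to act on.
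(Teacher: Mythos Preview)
Your argument for (a) is essentially identical to the paper's: genericity of $q$ gives local constancy of $h^1$ (hence of $h^0$), and then Grauert/EGA base-change makes the direct image locally free with surjective evaluation at $q$. For (b) the paper in fact gives no explicit argument, so your openness argument via restriction to $\cale_v$ and the vanishing locus of the induced section of the (locally free) pushforward is a welcome and correct addition; the only cosmetic point is that you should quantify over $v\in\calv(|Z|)$ rather than all of $\calv$, since the fixed-component condition only concerns those $E_v$ in the support of $Z(r)$.
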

\begin{proof} (a)
Since $Z(\tir)_q$ is generic, $q$  does not
 sit in the union of the discriminant spaces considered in \ref{ss:gen}. In that subsection we considered
all the discriminants associated with all the Chern classes and the `$r$--tower', hence, in particular,
we had countably many discriminant obstructions. By assumption, $q$ is not contained in any of these.
In this proof we have to concentrate on the Chern class $l'$ and the tower level $Z(r)$, hence only one discriminant.  In particular,
$q\in Q$ has a small neighbourhood which does not intersect it.
Therefore, decreasing the representative of  $(Q,q)$ we get the stability of the corresponding $h^1$--cohomology sheaves. Furthermore, $\omegl$ is proper,
 $\calO_{\lambda(r)^{-1}(U)}(D_\lambda (l'))$ is
coherent, and $q'\mapsto h^1(Z(r)_{q'},\calO_{Z(r)_{q'}}(l'))$ is constant.
 Hence by EGA III.2 \S 7
(or, see e.g. \cite{osserman}),
$R^0\omegl_*(\calO_{\lambda(r)^{-1}(U)}(D_\lambda (l')) ) $ is locally free and
$R^0\omegl_*( \calO_{\lambda(r)^{-1}(U)}(D_\lambda (l')))\otimes _{\calO_{(Q,q)}}\C(q)  \to
H^0(Z(r)_q,\calO_{Z(r)_q}(l'))$ is an isomorphism.
\end{proof}

\section{A special 1--parameter family of deformation.}\label{ss:1def}

\subsection{}  Next, we describe a special  1--parameter deformation
of a fixed resolution of a normal surface singularity $(X,o)$, what will play a crucial role in the proof of
the main Theorem \ref{th:CLB1}.

We choose any good resolution $\phi:(\tX,E)\to (X,o)$, and write
 $\cup_v E_v=E=\phi^{-1}(o)$ as above.
Since each $E_v$ is rational, a small tubular neighborhood of $E_v$ in $\tX$ can be identified with
the disc-bundle associated with the total space $T(e_v)$ of $\calO_{\bP^1}(e_v)$, where $e_v=E_v^2$.
(We will abridge $e:=e_v$.)
Recall that $T(e)$ is obtained by gluing $\C_{u_0}\times \C_{v_0}$ with $\C_{u_1}\times \C_{v_1}$
via identification  $\C_{u_0}^*\times \C_{v_0}\sim\C_{u_1}^*\times \C_{v_1}$,
$u_1=u_0^{-1}$, $v_1=v_0u_0^{-e}$, where $\C_w$ is the affine line with coordinate $w$, and $\C^*_w=\C_w\setminus \{0\}$.

Next, fix any curve $E_w$ of $\phi^{-1}(o)$  and also a {\it generic} point $P_w\in E_w$.
There exists an identification  of the tubular neighbourhood of $E_w$ via $T(e)$ such that
 $u_1=v_1=0$ is $P_w$.
By blowing up $P_w\in \tX$ we get a second resolution $\psi:\tX'\to \tX$;  the strict transforms of $\{E_v\}$'s will be denoted by $E_v'$, and the new
exceptional $(-1)$ curve by $E_{new}$. If we contract  $E_w'\cup E_{new}$ we get a cyclic quotient singularity, which is taut, hence the
 tubular neighbourhood of $E_w'\cup E_{new}$ can be identified with the tubular neighbourhood
 of the union of the zero sections in $T(e-1)\cup T(-1)$. Here we represent $T(e-1)$ as the gluing
 of $\C_{u_0'}\times \C_{v_0'}$ with $\C_{u_1'}\times \C_{v_1'}$ by
$u_1'=u_0'^{-1}$, $v_1'=v_0'u_0'^{-e+1}$. Similarly, $T(-1)$ as
$\C_{\beta}\times \C_{\alpha}$ with $\C_{\delta}\times \C_{\gamma}$ by
$\delta=\beta^{-1}$, $\gamma=\alpha\beta$. Then $T(e-1)$ and $T(-1)$ are glued
along $\C_{u_1'}\times \C_{v_1'}\sim \C_{\beta}\times \C_{\alpha}$
by
$u_1'=\alpha$, $v_1'=\beta$ providing a neighborhood of $E_w'\cup E_{new}$ in $\tX'$.
Then the neighbourhood  $\tX'$ of  $\cup_vE_v'\cup E_{new}$ will be modified by the following 1--parameter family of spaces: the neighbourhood of $\cup_vE_v' $ will stay unmodified, however
$T(-1)$, the neighbourhood of $E_{new}$ will be glued along $\C_{u_1'}\times \C_{v_1'}\sim \C_{\beta}\times \C_{\alpha}$  by
$u_1'+t=\alpha$, $v_1'=\beta$, where $t\in (\C,0)$ is a small holomorphic parameter. The smooth
complex surface obtained in this way will be denoted by $\tX'_t$, and the `moved' $(-1)$--curve
in $\tX'_t$ by  $E_{new,t}$. If we blow down $E_{new,t}$ we obtain  the surface $\tX_t$.

By construction, the  family of spaces $\{\tX'_t\}_{t\in (\C,0)}$ form a smooth 3--fold
$\widetilde{\calx}'$, together with a flat map
$\lambda':(\widetilde{\calx}',\tX') \to (\C,0)$, a $C^\infty$
trivial fibration, such that $\lambda'^{-1}(t)=\tX'_t$. Similarly,
the  family $\{\tX_t\}_{t\in (\C,0)}$ form a smooth 3--fold
$\widetilde{\calx}$, together with a flat map
$\lambda:(\widetilde{\calx},\tX) \to (\C,0)$, a $C^\infty$
trivial fibration, such that $\lambda^{-1}(t)=\tX_t$.

\begin{remark}
Such a deformation $\lambda:(\widetilde{\calx},\tX) \to (\C,0)$, reduced to some $\Gamma(\tir)$, say with
$\tir\gg 0$,
is always the pullback of a complete deformation of $\calO_{\tX}|Z(\tir)$.
Hence, if $\tX$ is generic, then the base point
$q_0$ corresponding to the fiber $\calO_{\tX}|Z(\tir)$ is in $Q\setminus \Delta(\tir)$. Since for
such $q_0$ there is a ball
$q\in U\subset Q\setminus \Delta (0,\tir)$
such that $\omegl$ simultaneously blows down to a flat family
${\mathcal X}\to U$ (cf. \ref{rem:generic}(c)),
the  deformation $\lambda:(\widetilde{\calx},\tX) \to (\C,0)$ also
 blows down to a deformation $ \calx\to (\C,0)$ of $(X,o)$.
In fact, $\lambda$  is a weak simultaneous resolution of the
(topological constant) deformation $\calx\to (\C,0)$, cf.  \cite{LaW,KSB}.
The point is that along the deformation $\lambda $ automatically
 we will have the $h^1$--stabilities
for {\it any} other finitely many restricted  natural line bundles as well, cf. Remark \ref{rem:generic}(b)
(that is, for the very same $\tX$ and its deformation $\lambda$,
the finitely many Chern classes --- whose $h^1$--stability we wish ---
can be chosen arbitrarily, depending on the geometrical situation
we treat).
\end{remark}

\section{The cohomology of restricted natural line bundles}\label{s:CNLB}

\subsection{The setup}\label{ss:setup2}
We fix a normal surface singularity $(X,o)$ and one of its good resolutions $\tX$ with
exceptional divisor $E$ and dual graph $\Gamma$.
For any integral effective cycle  $Z=Z(r)$ whose support $|Z|$ is included in
 $E$ (not necessarily the same as $E$) write $\calv(|Z|)$ for the set of vertices
 $\{v:\ E_v\subset |Z|\}$ and $\calS'(|Z|)\subset L'(|Z|) $ for the Lipman cone associated with
 the induced lattice $L(|Z|)$.
 As above, for any $l'\in L'$ we  denote the restriction of the natural
line bundle $\calO_{\tX}(l') $ to $Z$ by $\calO_Z(l')$. Denote  also by $\tilde{l}$
the cohomological restriction $ R(l')$  of $l'\in L'$ to $ L'(|Z|)$.
Recall also that  for any  $-\tilde{l}\in \calS'(|Z|)$
 one has the Abel map
$c^{\tilde{l}}:\eca ^{\tilde{l}}(Z)\to \pic^{\tilde{l}}(Z)$.

\begin{theorem}\label{th:CLB1}   Assume that  $\tX$
is generic in the sense of Definition \ref{def:GEN}. Fix also some $Z=Z(r)$ as above.
   Choose $l'=\sum_{v\in \calv}l'_vE_v \in L'$ such that
$l'_v <0$ for any $v\in\calv(|Z|)$. Then the following facts hold.

\vspace{1mm}

\noindent (I) \  Assume additionally that
$- \tilde{l}\in \calS'(|Z|)\setminus \{0\}$. Then
  the following facts are equivalent:

(a) $\calO_Z(l')\in {\rm im}(c^{\tilde{l}})$, that is,   $H^0(Z,\calO_Z(l'))_{reg}\not=\emptyset $;

(b) $c^{\tilde{l}}$ is dominant, or equivalently,
for a generic line bundle $\calL_{gen}\in \pic^{\tilde{l}}(Z)$ one has
$\calL_{gen}\in {\rm im}(c^{\tilde{l}})$ (that is,
 $H^0(Z,\calL_{gen})_{reg}\not=\emptyset $).

 (c) $\calO_Z(l')\in {\rm im}(c^{\tilde{l}})$,
 and for any $D\in (c^{\tilde{l}})^{-1}(\calO_Z(l'))$ the tangent map
 $T_Dc^{\tilde{l}}: T_D\eca^{\tilde{l}}(Z)\to T_{\calO_Z(l')}\pic^{\tilde{l}}(Z)$ is surjective.

 \vspace{1mm}

 \noindent (II)  
$h^i(Z,\calO_Z(l'))=h^i(Z,\calL_{gen})$
for a   generic line bundle  $\calL_{gen}\in \pic^{\tilde{l}}(Z)$ and $i=0,1$.
\end{theorem}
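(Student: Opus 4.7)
My plan is to prove (I) via the cycle (c) $\Rightarrow$ (b) $\Rightarrow$ (a) $\Rightarrow$ (c), and then deduce (II) from (I). The truly hard step will be (a) $\Rightarrow$ (c): this is where the genericity hypothesis on $\tX$ must be exploited, via Laufer duality and the 1-parameter family of \S\ref{ss:1def}. The other implications, by contrast, are essentially formal consequences of dominance and of the extension properties packaged in Lemmas \ref{lem:semic} and \ref{lem:extension}.

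The implication (c) $\Rightarrow$ (b) is immediate: surjectivity of $T_D c^{\tilde l}$ at some preimage $D$ makes $c^{\tilde l}$ a submersion near $D$, so its image contains a neighborhood of $\calO_Z(l')$, whence $c^{\tilde l}$ is dominant (equivalently, a generic $\calL_{gen} \in \pic^{\tilde l}(Z)$ lies in its image). For (b) $\Rightarrow$ (a), I would run the one-parameter deformation $\lambda : (\widetilde{\calx}, \tX) \to (\C, 0)$ of \S\ref{ss:1def}. On a generic nearby fiber $\tX_t$, dominance together with the genericity of $\tX$ (and of $\tX_t$, by Remark \ref{rem:generic}(b) applied to the finite set $\{l'\}$) forces $\calO_{Z_t}(l')$ to be in the image of the relevant Abel map, yielding a section of $\calO_{Z_t}(l')$ without fixed components; then Lemma \ref{lem:extension}, combined with the constancy of $h^0$ over $Q \setminus \Delta(\tir)$ guaranteed by Lemma \ref{lem:semic}, propagates such a section back to $t = 0$.

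The substantive content is (a) $\Rightarrow$ (c). Fix $D \in (c^{\tilde l})^{-1}(\calO_Z(l'))$. Via Serre duality $T_{\calO_Z(l')} \pic^{\tilde l}(Z) \simeq H^1(Z, \calO_Z)$, surjectivity of $T_D c^{\tilde l}$ is equivalent, through Laufer duality \eqref{eq:LD} and the explicit integral formulas \eqref{eq:Tomega1}--\eqref{eq:Tomega2}, to non-degeneracy of the pairing that sends a class $[\tomega]$ to the linear functional on tangent directions obtained by integration. Suppose some $[\tomega] \in H^1(\tX,\calO_{\tX})^*$ annihilates the image. Since $\calO_Z(l')$ has a section without fixed components (by (a)) and $l'_v < 0$ on $\calv(|Z|)$, one may slide $D$ inside its fiber so that its support consists of generic smooth points $\{P_w\}$ of $E$. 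Applying the horizontal deformation of \S\ref{ss:1def} at each $P_w$, Remark \ref{rem:generic}(b) together with Lemma \ref{lem:extension} (applied to the single Chern class $l'$) produces a horizontal lift of $D$ along which the pairing $\langle\langle \,\cdot\,, [\tomega]\rangle\rangle$ remains identically zero. Formula \eqref{eq:Tomega2} then forces the leading polar coefficients of $\tomega$ along the relevant $E_w$ to vanish at each $P_w$; letting $P_w$ range over the generic locus of $E_w$ kills the pole of $\tomega$ along $E_w$ entirely, and iterating over all components carrying points of $D$ yields $[\tomega] = 0$. This is the main obstacle: making sure the deformations are organized so that the local polar contributions captured by \eqref{eq:Tomega2} actually sweep out all of $H^1(\tX,\calO_{\tX})^*$, with careful bookkeeping of multiplicities in $D$ and of the components $E_w$ intersecting $|Z|$.

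Part (II) is then a corollary of (I). Once $\calO_Z(l')$ and $\calL_{gen}$ are known to lie on the same side of the Abel map, the dimension formulas of \cite[Th. 5.3.1]{NNI}, combined with Riemann--Roch to handle $h^0$ and with the upper semicontinuity of $h^0$ on fibers of $c^{\tilde l}$, give $h^i(Z, \calO_Z(l')) = h^i(Z, \calL_{gen})$ for $i = 0, 1$.
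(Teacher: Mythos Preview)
Your argument for (b)$\Rightarrow$(a) is circular. Dominance of $c^{\tilde l}$ says only that a \emph{generic} element of $\pic^{\tilde l}(Z)$ lies in the image; it gives no reason why the \emph{specific} bundle $\calO_{Z_t}(l')$ on a nearby fiber should be hit. Asserting that ``genericity of $\tX_t$ forces $\calO_{Z_t}(l')$ to be in the image'' is exactly the statement (a) you are trying to prove. The paper avoids this by running a double induction on $|\calv(|Z|)|$ and $\sum_v r_v$: if $H^0(Z,\calO_Z(l'))_{reg}=\emptyset$ then some $E_v$ is a fixed component, and the inductive Part (II) applied to $(Z-E_v,\,l'-E_v)$ yields $h^0(Z,\calO_Z(l'))=h^0(Z-E_v,\calO_{Z-E_v}(l'-E_v))=h^0(Z-E_v,\calL_{gen}(-E_v))<h^0(Z,\calL_{gen})$, contradicting semicontinuity. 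No deformation is used here; the induction is essential and you have omitted it.

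Your sketch of (a)$\Rightarrow$(c) has a more serious gap: it only detects first-order poles. If $[\tomega]$ has a pole of order $o>1$ along some $E_w\subset |Z|$, then moving a reduced point of $D$ transversally along $E_w$ (the deformation $v\mapsto v+t$ in \eqref{eq:Tomega1}) produces a linear term governed by the coefficient $a_{-1,0}$, not by $a_{-o,0}$; so you can have a nontrivial $[\tomega]$ that annihilates all such deformations. Your claim that ``letting $P_w$ range over the generic locus of $E_w$ kills the pole of $\tomega$ along $E_w$ entirely'' is simply false for $o\geq 2$. The paper's remedy is the tower of blow-ups $b_n:\tX_n\to\tX$ at successive generic points, chosen so that the pole order of $b_k^*\tomega$ along the new curve $F_{k-1}$ drops to exactly one (Lemma \ref{lem:AC2} and \ref{bek:k}). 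Only then does the $1$-parameter deformation of \S\ref{ss:1def}, applied to $F_k$ moving along $F_{k-1}$, produce via \eqref{eq:Tomega2} a path $t\mapsto \calO_{\pi_U(b_k^*Z)}(\ell(P_t-P))$ whose derivative against $\omeg^U_k$ is nonzero; the extension Lemma \ref{lem:extension} supplies the path of divisors $D_t$, and comparing the two computations of the derivative (\eqref{eq:dergamma} versus \eqref{eq:dergamma4}) gives the contradiction. The factorization through the tubular neighbourhood $U$ (Lemma \ref{lem:restrict}) and the crucial use of $l'_w<0$ to guarantee $\ell\neq 0$ in \ref{bek:sum} are further ingredients your sketch does not address.

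Finally, your Part (II) is too optimistic. When the equivalent conditions of (I) fail, neither $\calO_Z(l')$ nor $\calL_{gen}$ is in ${\rm im}(c^{\tilde l})$, and \cite[Th.~5.3.1]{NNI} alone does not compare their $h^0$; the paper again invokes the inductive hypothesis on $Z-E_v$. There is also a separate, delicate case $\tilde l=0$ (subsection \ref{ss:PartII2}) where one must rule out $\calO_Z(l')=\calO_Z$ for generic $\tX$, using once more the deformation argument. Without the induction scaffold none of this goes through.
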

(For a remark regarding the assumptions of the theorem see \ref{rem:cohGen}(c).)
\begin{remark}\label{rem:natisgen}
The theorem  shows that if we fix $\Gamma(r)$
 then the restrictions of natural line bundles of generic singularities cohomologically
behave similarly as the generic line bundles.
This  is the main  guiding principle of the present article. This principle, in general, can be
formulated as follows. Fix some invariant associated with line bundles of resolutions with fixed graph and fixed Chern class.
Then one expects that the  invariant evaluated on the restricted  natural line bundle
 in the context of the  generic singularity  agrees with the value of the invariant evaluated
  on the   generic bundle with the same topological data
  (associated with an arbitrary fixed analytic type).

Note that by  \cite[Theorem 5.3.1]{NNI}
 the cohomology of the generic line bundles depends only on the combinatorics of $\Gamma$
 (for the formula see e.g. the introduction or (\ref{eq:cohGen})).

\end{remark}

\bekezdes {\bf Starting the proof of Theorem \ref{th:CLB1}.}\label{bek:indstep}
We use double induction over the cardinality of the subset
$\calv(|Z|)\subset \calv$  and $\sum_vr_v$.

If $|\calv(|Z|)|=1$ then $\pic^0(Z)=0$ and all line bundles with the same Chern class are isomorphic,
hence all the statements are trivially true for any $Z$ and any $l'$.
Hence let us fix some virtual support $|Z|$ and assume that
all the statements are valid for any cycle with support smaller than $|Z|$
and for any  $l'$ with the corresponding restrictions.

Next, we run induction over $\sum_{v\in \calv(|Z|)}r_v$.
Assume that $r_v\leq 1$ for all $v$. Then $\pic^{0}(Z)=0$ again and  both (I) and (II) hold.
Hence, we  assume that  (I) and (II) hold  for all cycles  with $\sum_vr_v<N$ (and any
$l'$ with the required restrictions) and we consider some
 $Z=Z(r)$ with  $\sum_vr_v=N$.

\bekezdes{\bf The first part of the proof of Theorem \ref{th:CLB1}(I).}\label{bek:kezdes}
First we verify the `easy' implications.

\vspace{1mm}

\noindent $(c)\Rightarrow(b)$ \
Since $\eca^{\tilde{l}}(Z)$ is smooth (cf. \cite[Th. 3.1.10]{NNI}), by local submersion theorem,
 if $T_Dc^{\tilde{l}}$ is surjective then the germ
$c^{\tilde{l}}:(\eca^{\tilde{l}}(Z),D)\to (\pic^{\tilde{l}}(Z),\calO_Z(l'))$ is surjective too. Since $c^{\tilde{l}}$ is
an algebraic morphism and its image contains a small analytic ball of top dimension,
$c^{\tilde{l}}$ is dominant.

\vspace{1mm}

\noindent $(b)\Rightarrow(a)$ \
Since $H^0(Z,\calL_{gen})_{reg}\not=\emptyset$,
one has $h^0(Z,\calL_{gen})\not=0$, hence by the semicontinuity of $\calL\mapsto h^0(Z,\calL)$ (cf.
\cite[Lemma 5.2.1]{NNI}) $h^0(Z,\calO_Z(l'))\not=0$ too. Next, assume that
$h^0(Z,\calO_Z(l'))_{reg}=\emptyset$, that is, there exists $v\in\calv(|Z|)$ such that
$h^0(Z,\calO_Z(l'))=h^0(Z-E_v, \calO_Z(l')(-E_v))$.
Note that $\calO_Z(l')(-E_v)|_{Z-E_v}$ is also a restricted natural line bundle, it is
$\calO_{Z-E_v}(l'-E_v)$.
Furthermore,  from  $l'_u<0$ for $u\in \calv(|Z|)$ we obtain
$(l'-E_v)_u< 0$ too.
Therefore, by the inductive step (part II)
$h^0(Z-E_v, \calO_Z(l'-E_v))=h^0(Z-E_v, \calL_{gen}(-E_v))$ and by
the assumption $h^0(Z-E_v, \calL_{gen}(-E_v))< h^0(Z,\calL_{gen})$. Thus
 $h^0(Z,\calO_Z(l'))<h^0(Z,\calL_{gen})$, a fact, which contradicts  the semicontinuity
of $\calL\mapsto h^0(Z,\calL)$.

The proof of  $(a)\Rightarrow(c)$ in (I)
 is much harder and longer, and it is the core of the present theorem.

\subsection{ The proof of $(a)\Rightarrow(c)$ in short  } \label{ss:proofAC1} The detailed proof
is presented in \ref{ss:proofAC}; in this subsection we
summarize  the main steps in order to  help the
reading of the complete  proof, though in this way inevitably some repetitions will occur.
(Since the idea of the proof -- based on the construction of the 1--parameter family -- is quite fruitful, it will be used several times in forthcoming  manuscripts as well, hence in the future work we will refer to these paragraphs as the
basic prototype.)

First we identify  $\pic^{\tilde{l}}(Z)$  with $\pic^0(Z)$
 by $\calL\mapsto \calL\otimes \calO_Z(-l')$, and  $\pic^0(Z)$  with  $H^1(Z,\calO_Z)$,
 and we replace $c^{\tilde{l}}(Z)$ with  $\widetilde{c}^{l'}(Z): \eca^{\tilde{l}}(Z)\to
H^1(\calO_Z)$.  Therefore, we wish to show that for any
 $D\in (\widetilde{c}^{l'})^{-1}(0)$ the tangent map
 $T_D\widetilde{c}^{l'}: T_D\eca^{\tilde{l}}(Z)\to T_{0}H^1(\calO_Z)$ is surjective.

 Assume that this is not happening. Then there exists a linear functional
 $\omeg\in H^1(\calO_Z)^*$, $\omeg\not=0$,
such that $\omeg|_{{\rm im}(T_D\widetilde{c}^{l'})}=0$.
This lifts to a nonzero functional $\widetilde{\omeg}$ of $H^1(\calO_{\tX})$,
which necessarily  has the form
$\widetilde{\omeg}=\langle\cdot, [\tomega]\rangle$
for some $\tomega\in H^0(\tX\setminus E,\Omega^2_{\tX})$, which
 necessarily must have a pole along some $E_w$. Using \cite{NNI} one shows that in fact
 we can choose  $E_w\subset |Z|$.
Next, we modify $\tX$ by a sequence of  blow ups.
First  we blow up $\tX$ at generic point of $E_w$ creating the new exceptional divisor
$F_1$, then we blow up a generic point of $F_1$
creating $F_2$, etc.
The sequence of $n$ such blow  ups will be denoted by $b_n:\tX_n\to \tX$, which has exceptional divisors
$\cup_{i=1}^n F_i$.
We define  $\omeg_n$ by the composition
$H^1(\calO_{b^*_n(Z)})\to H^1(\calO_Z)\stackrel{\omeg}\longrightarrow \C$
(where the first arrow is an isomorphism by Leray spectral sequence);
and similarly we set 
$\widetilde{\omeg}_n$ associated with some $\widetilde{Z}\gg0 $ (instead of $Z$).
Note  that $\tomeg_{n}\circ \,\widetilde{c}^{-F^*_n}(b_n^*(\tZ))$ corresponds to an integration
of the 2--form $b_n^*(\tomega)$
paired with divisors supported on $F_n$.
Since the pole order along $F_n$  of $b_n^*(\tomega)$
decreases by one after each blow up, after some steps $n$ it will have no pole along  $F_n$, hence
$\omeg_n\circ \widetilde{c}^{-F^*_n}(b_n^*(Z)):
\eca^{-F^*_n}(b_n^*(Z)) \to  H^1(\calO_{b^*_n(Z)}) \to \C$ is constant.
Let  $k$ be the smallest integer such that this map is constant. Then
$b_k^*(\tomega)$ has a pole of order one along $F_{k-1}$.

Next,  let $U \subset \tX_k$ be a small
 tubular neighbourhood of the exceptional curve
 $E_U:=E\cup (\cup_{i=1}^{k-1} F_i)$. Let $\Gamma_U$ be the dual graph of $E_U$.
  One  considers
the homological projection $\pi_U:L(\Gamma)\to L(\Gamma_U)$ and the
cohomological restriction $R_U:L'(\Gamma)\to L'(\Gamma_U)$
 (dual to the natural homological injection of
 cycles). Then first one identifies the germs in the corresponding spaces of effective Cartier divisors
 $(\eca^{\tilde{l}}(Z),D)\simeq
 (\eca^{b_k^*(\tilde{l})}(b_k^*(Z)),D)\simeq
 (\eca^{R_U(b_k^*(\tilde{l}))}(\pi_U(b_k^*(Z))),D)$, then one shows that
$ (\eca^{\tilde{l}}(Z),D)
 \stackrel{\widetilde{c}^{l'}}{\longrightarrow}
 H^1(\calO_Z)
 \stackrel{\omeg}{\longrightarrow} \C$ factorizes through
 $(\eca^{R_U(b_k^*(\tilde{l}))}(\pi_U(b_k^*(Z))),D)
 \stackrel{\widetilde{c}^{R_Ub^*_k(l')}}{\longrightarrow}
 H^1(\calO_{\pi_U(b^*_k(Z))})
 \stackrel{\omeg_k^U}{\longrightarrow} \C$.
 This, and the choice of $\omeg$ show that
\begin{equation*}
(\dag) \ \ \ \  \omeg_k^U\circ T_D(\widetilde{c}^{R_U(b_k^*(l'))}(\pi_U(b^*_k(Z)))=0.
\end{equation*}
Now we continue with the key construction of the proof. Using the exceptional divisors $F_{k-1}$ and $F_k$ we construct the 1--parameter family of deformation $\{\tX_{k,t}\}_t$ of $\tX_k$ (by mowing
the intersection point of $F_{k,t}$ along $F_{k-1}$), as in section \ref{ss:1def}. In this deformation one considers the universal family of natural line bundles.
Since in the central fiber $D$ is the divisor of a section of the corresponding natural line bundle,
and along the deformation the cohomology groups of the bundles are stable
(here we use the genericity),  by Lemma  \ref{lem:extension} this extends to a family of sections.
In this way we construct  a path in $\eca^{R_U(b^*_k(\tilde{l}))}(\pi_U(b^*_k(Z)))$ at $D$, $t\mapsto \gamma(t)$ (or, $\{D_t\}_t$ with $D_0=D$).
By the choice of $\omeg$ and ($\dag$) and the chain rule, 
 $\omeg\circ \widetilde{c}\circ \gamma$
must have zero derivative at $t=0$. This is valid even  for any common multiple of the divisors
$\{D_t\}_t$.
On the other hand, this derivative can be computed differently by Laufer integration.
Indeed, by taking a convenient multiple, the corresponding
powers of the members of the family of natural line bundles restricted on $U$ have the
form $\calO_{\pi_U(b^*_{k}(Z))}(\sum_v Nl'_vE_v+\ell\sum _{i=1}^{k-1} F_{i}+\ell F_{k,t})$ with $\ell\not=0$.
Here $\ell F_{k,t}\cap F_{k-1}$ is  moving divisor  along $F_{k-1}$. It paired with the
differential form of pole one by Laufer pairing has a non-trivial linear part,
cf. (\ref{eq:Tomega2}).
Hence its derivative
 at $t=0$ is nonzero, a fact which contradicts the previous statement.

\subsection{ The detailed proof of $(a)\Rightarrow(c)$  } \label{ss:proofAC}
Fix any $l^*\in L'$ and write $\overline{l}\in L'(|Z|)$ for its restriction.
Then
there is a canonical   identification of  $\pic^{\overline{l}}(Z)$  with $\pic^0(Z)$
 by $\calL\mapsto \calL\otimes \calO_Z(-l^*)$.
  Also,  $\pic^0(Z)$ identifies with  $H^1(Z,\calO_Z)$ by the inverse
of the exponential map such that $\calO_Z$ is identified with $0$.
In particular, $c^{\overline{l}}(Z): \eca^{\overline{l}}(Z)\to
\pic^{\overline{l}}(Z)$ can be identified with its composition with the above two maps,
 namely with $\widetilde{c}^{l^*}(Z): \eca^{\overline{l}}(Z)\to
H^1(\calO_Z)$.
In the sequel  $l^*$ will stay either for   $l'$ or for different cycles
of type $E_u^*$ with $E_u\in |Z|$. In this latter case,  the restriction of
$E^*_u\in L'$ is $E^*_u(|Z|)$, where this second dual is
 considered in $L'(|Z|)$. We use sometimes
 the same  notation $E^*_u$ for both of them, from the context
will be clear which one is considered.

Therefore, the wished statement $(a)\Rightarrow(c)$ transforms into
the following:
If   $D\in (\widetilde{c}^{l'})^{-1}(0)$ then the tangent map
 $T_D\widetilde{c}^{l'}: T_D\eca^{\tilde{l}}(Z)\to T_{0}H^1(\calO_Z)$ is surjective
 (under the assumptions of part (I)).

Assume that this is not the case for some $D$.
Then there exists a linear functional $\omeg\in H^1(\calO_Z)^*$, $\omeg\not=0$,
such that $\omeg|_{{\rm im}(T_D\widetilde{c}^{l'})}=0$.
During the proof we fix such a $D\in (\widetilde{c}^{l'})^{-1}(0)$
 and  $\omeg$.

First, we concentrate on $\omeg$.

\begin{lemma}\label{lem:AC1} For any $\omeg\in  H^1(\calO_Z)^*$, $\omeg\not=0$,
  there exists $E_w\subset |Z|$ such that $\,\omeg\circ\,\widetilde{c}^{-E^*_w}:
\eca^{-E^*_w}(Z)\to \C$ is not constant.
\end{lemma}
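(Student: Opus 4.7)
The plan is to exploit Laufer duality \eqref{eq:LD}--\eqref{eq:Tomega2} and reduce the non-constancy of $\omeg\circ\widetilde{c}^{-E^*_w}$ to an explicit residue computation. Specifically, I will interpret $\omeg$ as the class of a meromorphic $2$--form $\tomega$ having a pole along some $E_w\subset|Z|$, and then exhibit a one--parameter arc in $\eca^{-E^*_w}(Z)$ along which the pairing with $\tomega$ has a non-zero linear term.

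From the short exact sequence $0\to \calO_{\tX}(-Z)\to \calO_{\tX}\to \calO_Z\to 0$, together with the vanishing $H^2(\calO_{\tX}(-Z))=0$ (valid on a Stein representative of $\tX$), one gets the surjection $H^1(\tX,\calO_{\tX}) \twoheadrightarrow H^1(Z,\calO_Z)$, and dually the injection $H^1(Z,\calO_Z)^* \hookrightarrow H^1(\tX,\calO_{\tX})^*$. Combining Serre duality on $Z$ with the adjunction identity $\omega_Z \simeq \Omega^2_{\tX}(Z)|_Z$, and with Laufer's realization \eqref{eq:LD}, one identifies
\[
H^1(Z,\calO_Z)^* \;\simeq\; H^0(\tX,\Omega^2_{\tX}(Z))\,/\,H^0(\tX,\Omega^2_{\tX}),
\]
i.e.\ $\omeg$ is represented by a $2$--form $\tomega$ which has a pole of order at most $r_v$ along each $E_v\subset|Z|$ and is regular along the other components. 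Since $\omeg\neq 0$, this $\tomega$ cannot be globally regular, so it has a genuine pole of some order $o$ with $1\leq o\leq r_w$ along at least one $E_w\subset|Z|$.

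Fix such an $E_w$ and choose a point $p\in E_w$ that is smooth in $E$ (lies on no other $E_v$) and at which the leading coefficient of $\tomega$ along $E_w$ is nonzero. In local coordinates $(u,v)$ at $p$ with $E_w=\{u=0\}$, one may write $\tomega = (h(u,v)/u^o)\,du\wedge dv$ with $h(0,0)\neq 0$. Consider the family $\{\widetilde{D}_t\}_t$ of effective Cartier divisors on $Z$ whose ideal at $p$ is $(v+tu^{o-1})\subset \calO_{Z,p}=\calO_{\tX,p}/(u^{r_w})$: the bound $o-1\leq r_w-1$ ensures these are well-defined, and they lie in $\eca^{-E^*_w}(Z)$ (the support at $p$ is zero-dimensional and the Chern class is continuous in $t$). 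Direct application of \eqref{eq:Tomega2} then gives
\[
\omeg\big(\widetilde{c}^{-E^*_w}(\widetilde{D}_t)\big) - \omeg\big(\widetilde{c}^{-E^*_w}(\widetilde{D}_0)\big) \;=\; c\,t+O(t^2),\qquad c=(2\pi i)^2\,h(0,0)\neq 0,
\]
proving that $\omeg\circ\widetilde{c}^{-E^*_w}$ is not constant.

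The only delicate point I foresee is the identification of $H^1(Z,\calO_Z)^*$ with classes of forms having pole order $\leq r_v$ only along the components of $|Z|$; this requires the adjunction/Serre duality package together with a vanishing on a Stein representative. Once this is in place, the rest is the explicit residue calculation already performed in the text preceding \eqref{eq:Tomega2}, so no further obstacle is expected.
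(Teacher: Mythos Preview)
Your proof is correct and follows essentially the same strategy as the paper: represent $\omeg$ by a meromorphic $2$--form with a genuine pole along some $E_w\subset|Z|$, then use the explicit residue computation \eqref{eq:Tomega2} along a one--parameter family of divisors to exhibit non-constancy.

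The only noteworthy difference is in how the representing form is produced. The paper lifts $\omeg$ along the surjection $H^1(\calO_{\tZ})\to H^1(\calO_Z)$ for $\tZ\gg0$ and then invokes \cite[Theorems 6.1.9(d), 8.1.3]{NNI} to conclude that the form must have a pole along $|Z|$; you instead identify $H^1(\calO_Z)^*$ directly with $H^0(\Omega^2_{\tX}(Z))/H^0(\Omega^2_{\tX})$ via Serre duality and adjunction, which immediately bounds the pole orders by the multiplicities of $Z$. Your route is slightly more self-contained (no citation to \cite{NNI} needed for this step), and the identification you use is in any case invoked later in the paper (subsection \ref{ss:module}). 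The paper's route, working through $\tZ$, has the minor advantage that the Laufer integration formula \eqref{eq:Tomega2} is literally stated for divisors on $\tX$; in your argument one should remark that the pairing is compatible with the restriction $H^1(\calO_{\tX})\to H^1(\calO_Z)$, so that the local integral computes $\omeg$ applied to the class in $H^1(\calO_Z)$---but this is immediate from how you set up the duality.
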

\begin{proof}
Let $\tZ=\sum_v\tir_vE_v$ be a large cycle with all $\tir_v\gg0$ $(v\in\calv$)
so that $h^1(\calO_{\tZ})=h^1(\calO_{\tX})$.
Define $\tomeg$ by the composition
$H^1(\calO_{\tZ})\stackrel{\rho}{\longrightarrow} H^1(\calO_Z)\stackrel{\omeg}{\longrightarrow} \C$.
Since $\rho$ is onto, $\tomeg\not=0$ too.
Recall that
any functional  on $H^1(\calO_{\tX})$ has the form
$\widetilde{\omeg}=\langle\cdot, [\tomega]\rangle$, cf. (\ref{eq:Stokes}),
for some $\tomega\in H^0(\tX\setminus E,\Omega^2_{\tX})$.
Since $\tomeg\not=0$ the  form necessarily must have a pole along some $E_w$.
By combination of Theorems 6.1.9(d) and 8.1.3 of \cite{NNI} we know that the kernel of $\rho$
is dual with the subspace of forms which have no pole along $|Z|$.
Therefore, $\widetilde{\omega}$  must have a pole along some
$E_w\subset |Z|$.
Since
$\eca^{-E^*_w}(Z)$ is the space of effective Cartier divisors of $\tX$ (up to the equation of $Z$),
which intersect (transversally) only $E_w$,
 again by local nature of the  integration formula,
$\tomeg\circ \widetilde{c}^{-E^*_w}(\tZ):\eca^{-E^*_w}(\tZ)\to \C$ is nonconstant, cf. (\ref{eq:Tomega2}).
But
$\omeg\circ\widetilde{c}^{-E^*_w}(Z)$ composed with
$R:\eca^{-E^*_w}(\tZ)\to \eca^{-E^*_w}(Z)$ is exactly this 
map $\tomeg\circ \widetilde{c}^{-E^*_w}(\tZ)$.
Since $R$ is surjective (cf. \cite[Theorem 3.1.10]{NNI}),
$\omeg\circ\widetilde{c}^{-E^*_w}(Z)$ is nonconstant too.
\end{proof}

\bekezdes \label{bek:szam}
Let $Z$, $\omeg$ and $E_w\subset |Z|$ be as in Lemma \ref{lem:AC1}, and $\tomega$ as in its proof,
$\widetilde{\omeg}=\langle\cdot, [\tomega]\rangle$.
We wish to modify
the resolution $\tX$ (and the space $Z$) dictated by a certain property of $\tomega$.
For this we blow up $\tX$ at generic point of $E_w$ creating the new exceptional divisor
$F_1$, then we blow up a generic point of $F_1$
creating the new exceptional divisor $F_2$, etc.
The sequence of $n$ such blow  ups will be denoted by $b_n:\tX_n\to \tX$, which has exceptional divisors
$\cup_{i=1}^n F_i$. Note also that $H^1(\calO_{b^*_n(Z)})\to H^1(\calO_Z)$ is an isomorphism (use Leray spectral sequence).
We define  $\omeg_n$ by the composition
$H^1(\calO_{b^*_n(Z)})\to H^1(\calO_Z)\stackrel{\omeg}\longrightarrow \C$.
\begin{lemma}\label{lem:AC2}
For $n$ sufficiently large the next morphism is constant:
\begin{equation}\label{eq:omega1}
\omeg_n\circ \widetilde{c}^{-F^*_n}(b_n^*(Z)):
\eca^{-F^*_n}(b_n^*(Z)) \to  H^1(\calO_{b^*_n(Z)}) \to \C.\end{equation}
 \end{lemma}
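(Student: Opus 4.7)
The plan is to realize $\omeg_n\circ\widetilde{c}^{-F^*_n}(b_n^*(Z))$ as Laufer integration against $b_n^*(\tomega)$ localized near $F_n$, and then exploit the fact that the pole order of $b_n^*(\tomega)$ along the newly created $F_n$ drops by one at each blow-up.

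First I would track this pole order. Choose local coordinates $(u,v)$ at the generic point $P_1\in E_w$ with $E_w=\{u=0\}$ and write $\tomega=u^{-k}h(u,v)\,du\wedge dv$, where $k\geq 1$ is the pole order of $\tomega$ along $E_w$ and $h(0,v)\not\equiv 0$. In the chart $(u_1,v_1)\mapsto(u_1,u_1v_1)$ of the blow-up at $P_1$ one has $du\wedge dv=u_1\,du_1\wedge dv_1$, so $b_1^*(\tomega)=u_1^{-(k-1)}h(u_1,u_1v_1)\,du_1\wedge dv_1$, meaning the pole order along $F_1=\{u_1=0\}$ is $k-1$. Iterating the same computation at the generic points of $F_1,F_2,\ldots$ gives pole order $k-n$ along $F_n$, so for $n\geq k$ the form $b_n^*(\tomega)$ is holomorphic in a Zariski open neighborhood of $F_n\setminus(F_n\cap F_{n-1})$.

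Next I would identify $\omeg_n$ with Laufer integration against $b_n^*(\tomega)$. Lift $\omeg$ to a functional $\tomeg$ on $H^1(\calO_{\tZ})\cong H^1(\calO_{\tX})$ represented via (\ref{eq:Stokes}) by $\tomega$. Since $b_n$ is a composition of blow-ups at smooth points of $\tX$, the induced map $H^1(\tX,\calO_{\tX})\to H^1(\tX_n,\calO_{\tX_n})$ is an isomorphism and the Laufer class of $b_n^*(\tomega)$ represents the corresponding lift of $\omeg_n$. For a divisor $D\in\eca^{-F^*_n}(b_n^*(Z))$ the cocycle representing $\widetilde{c}^{-F^*_n}(D)$ is supported in a small bidisc $B$ around the point where $D$ meets $F_n$, exactly as in (\ref{eq:Tomega1}), so by (\ref{eq:Stokes}) only the $B$-contribution survives; any constant shift arising from identifying $\pic^{-F^*_n}(b_n^*(Z))$ with $H^1(\calO_{b_n^*(Z)})$ is irrelevant to whether the composition is constant.

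Finally I would conclude via vanishing on a generic divisor. By \cite[Th. 3.1.10]{NNI}, $\eca^{-F^*_n}(b_n^*(Z))$ is smooth and irreducible of dimension equal to the coefficient of $F_n$ in $b_n^*(Z)$, and a generic point is supported at distinct reduced points transverse to $F_n$ away from $F_n\cap F_{n-1}$. At each such point I apply (\ref{eq:Tomega1}) with $\ell=1$; since $b_n^*(\tomega)$ has no pole there, the observation right after (\ref{eq:Tomega1}) forces the local pairing to vanish, and summing gives $\omeg_n(\widetilde{c}^{-F^*_n}(D))=0$. An algebraic morphism from an irreducible variety into $\C$ that vanishes on a Zariski dense open subset is identically zero, so in particular constant. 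The main obstacle I expect is the second paragraph: cleanly identifying $\omeg_n$ as Laufer integration against $b_n^*(\tomega)$ through the chain of restrictions and the split $\pic^{-F^*_n}\cong H^1(\calO)$; once that bookkeeping is settled, the pole-order drop together with (\ref{eq:Tomega1}) gives the vanishing immediately.
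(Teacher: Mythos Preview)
Your approach is correct and essentially the same as the paper's: both hinge on the pole order of $b_n^*(\tomega)$ along $F_n$ dropping by one at each blow-up, so that eventually the Laufer pairing (\ref{eq:Tomega1}) becomes trivial. The paper handles the bookkeeping you flag as the ``main obstacle'' by lifting to a large cycle $\tZ$ (where $H^1(\calO_{\tZ})\cong H^1(\calO_{\tX})$ and Laufer integration is literally available) and then descending via the surjective restriction $R_n:\eca^{-F^*_n}(b_n^*(\tZ))\to\eca^{-F^*_n}(b_n^*(Z))$; one small slip in your version is that (\ref{eq:Tomega1}) computes the pairing of a \emph{difference} $\widetilde{D}_t-\widetilde{D}$ with $\tomega$, so what you actually obtain is constancy on the generic locus rather than literal vanishing---but that suffices.
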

\begin{proof}
Consider $\tZ$ and the notations of the proof of Lemma \ref{lem:AC1}, and the composition
$\tomeg_n\circ \,\widetilde{c}^{-F^*_n}(b_n^*(\tZ))$, similar to (\ref{eq:omega1}),
but with $\tZ$ instead of $Z$. This for any $n$ gives the diagram

\vspace*{5mm}

\begin{equation}\label{eq:diagr2} \
\end{equation}

\vspace*{-2cm}

\begin{picture}(400,65)(-100,-10)
\put(0,40){\makebox(0,0)[l]{$
\eca^{-F^*_n}(b_n^*(\tZ)) \
 \stackrel{\widetilde{c}^{-F^*_n}}{\longrightarrow} \
 H^1(\calO_{b^*_n(\tZ)}) \
 \stackrel{\tomeg_{n}}{\longrightarrow} \ \ \C$}}

 \put(0,5){\makebox(0,0)[l]{$
\eca^{-F^*_n}(b_n^*(Z)) \
 \stackrel{\widetilde{c}^{-F^*_n}}{\longrightarrow} \
 H^1(\calO_{b^*_n(Z)}) \
 \stackrel{\omeg_{n}}{\longrightarrow} \ \ \C$}}

\put(30,22){\makebox(0,0){$\downarrow$}}\put(30,20){\makebox(0,0){$\downarrow$}}
\put(40,22){\makebox(0,0){$R_n$}}
\put(127,22){\makebox(0,0){$\downarrow$}}\put(127,20){\makebox(0,0){$\downarrow$}}
\put(185,22){\makebox(0,0){$\downarrow$}}
\put(195,22){\makebox(0,0){$\simeq$}}
\end{picture}

\noindent
Note  that $\tomeg_{n}\circ \,\widetilde{c}^{-F^*_n}(b_n^*(\tZ))$ corresponds to an integration
of the 2--form $b_n^*(\tomega)$
paired with a divisor supported on $F_n$ (cf. \ref{ss:Integ}).
Since the pole order along $F_n$  of $b_n^*(\tomega)$
decreases by one after each blow up, after some steps $n$ it will have no pole along  $F_n$, hence
$ \tomeg_{n}\circ \widetilde{c}^{-F^*_n}(b_n^*(\tZ))=
\omeg_{n}\circ \widetilde{c}^{-F^*_n}(b_n^*(Z))\circ R_n$
is constant. Since $R_n$ is surjective (see e.g. \cite[Theorem 3.1.10]{NNI}),
the statement follows.
\end{proof}
\bekezdes\label{bek:k}
In the sequel, let $k\geq 1$ be the smallest integer such that
$\omeg_k\circ \widetilde{c}^{-F^*_k}(b_k^*(Z))$ is constant.
Consider again $\tZ$  as in the proof of Lemmas
 \ref{lem:AC1} and \ref{lem:AC2}. The functionals
$\omeg_{k-1}$, $\omeg_k$,  $\tomeg_{k-1}$ and $\tomeg_{k}$  (as in \ref{bek:szam} and
 (\ref{eq:diagr2}))
 form the following commutative diagram:

\vspace*{5mm}

\begin{equation}\label{eq:diagr3} \
\end{equation}

\vspace*{-2cm}

\begin{picture}(400,100)(-100,-25)
\put(0,40){\makebox(0,0)[l]{$
 H^1(\calO_{b^*_k(\tZ)}) \
 \stackrel{\simeq}{\longrightarrow} \ \
 H^1(\calO_{b^*_{k-1}(\tZ)}) \
 \stackrel{\tomeg_{k-1}}{\longrightarrow} \ \ \ \C$}}

 \put(0,5){\makebox(0,0)[l]{$
 H^1(\calO_{b^*_k(Z)})\
 \stackrel{\simeq}{\longrightarrow} \
 H^1(\calO_{b^*_{k-1}(Z)}) \ \
 \stackrel{\omeg_{k-1}}{\longrightarrow} \ \ \ \C$}}

\put(30,22){\makebox(0,0){$\downarrow$}}\put(30,20){\makebox(0,0){$\downarrow$}}
\put(112,22){\makebox(0,0){$\downarrow$}}\put(112,20){\makebox(0,0){$\downarrow$}}
\put(182,22){\makebox(0,0){$\downarrow$}}
\put(192,22){\makebox(0,0){$\simeq$}}

\qbezier(40,50)(100,60)(165,50)
\put(165,50){\vector(4,-1){10}}
\put(100,60){\makebox(0,0){$\tomeg_{k}$}}
\qbezier(40,-8)(100,-18)(165,-8)
\put(165,-8){\vector(4,1){10}}
\put(100,-18){\makebox(0,0){$\omeg_{k}$}}
\end{picture}

\noindent
By the choice of $k$ and by the diagrams (\ref{eq:diagr2})--(\ref{eq:diagr3})
$\tomeg_{k-1}\circ \widetilde{c}^{-F^*_{k-1}}(b_k^*(\tZ))$ is nonconstant, while
$\tomeg_{k}\circ \widetilde{c}^{-F^*_k}(b_k^*(\tZ))$ is constant.
Therefore,
$b^*_{k}(\tomega)$
has a pole of order one along $F_{k-1}$. In particular, the  maps
$\eca^{-F^*_{k-1}}(b_{k}^*(V)) \to
 H^1(\calO_{b^*_{k}(V)})  \to\C$ (where $V$ is either $\tZ$ or $Z$)
depend only on the reduced structure of $ b^*_{k}(V)$ along $F_{k-1}$, and they
all can be identified with the map represented by
Laufer's  integration pairing. (For this check the integrals from \ref{ss:Integ}
for a 2--form with pole of order one.)

\bekezdes\label{bek:multstr}
In Lemma \ref{lem:AC2} and in
the discussion from \ref{bek:k} one can replace in  $\eca^{-F^*_{k-1}}$ and in
 $\eca^{-F^*_{k}}$ the cycles $F^*_{k-1}$ and $F^*_{k}$
by any multiple of them:  $N F^*_{k-1}$ and  $N F^*_{k}$
respectively, for any $N\in \Z_{>0}$.
Indeed,  the space of divisors has a natural `additive' structure, namely a dominant map
$s^{l'_1,l'_2}(V):\eca^{l_1'}(V)\times \eca^{l'_2}(V)\to \eca^{l_1'+l_2'}(V)$ which satisfies
$\widetilde{c}^{l_1'+l_2'}\circ s^{l'_1,l'_2}=\widetilde{c}^{l_1'}+
\widetilde{c}^{l_2'}$. Therefore, if for $n=k-1$ or $n=k$ the image
 ${\rm im}(\widetilde{c}^{-F^*_{n }})$ belongs to an affine subspace $A$ of
  $H^1(\calO_{b^*_n(Z)})$, then
   ${\rm im}(\widetilde{c}^{-N F^*_{n }})$ belongs to $N A:=A+\cdots +A$ too.
 In particular,
$\omeg_{k-1}\circ \widetilde{c}^{-N F^*_{k-1}}(b_k^*(Z))$ is nonconstant, while
$\omeg_{k}\circ \widetilde{c}^{-N F^*_k}(b_k^*(Z))$ is constant.
(Compare  also with the $\ell$--dependence in (\ref{eq:Tomega1}).)
   Furthermore, the discussion from \ref{bek:k} can be repeated for any $N$, the composed maps depend only on the reduced structure of $b^*_k(Z)$, hence $Z$ can be replaced by any large $\tZ$, in which case
   the composition can be computed by Laufer's  integration duality formula.

This shows that one has a factorization (where $V=\tZ$ or $Z$, and $\omeg_{V,k}=\tomeg_k$ or
 $\omeg_k$ respectively)

\vspace*{5mm}

\begin{equation}\label{eq:diagr4} \
\end{equation}

\vspace*{-1.5cm}

\begin{picture}(400,55)(-100,-10)
\put(0,40){\makebox(0,0)[l]{$
\eca^{-N F^*_{k-1}}(b_{k}^*(V)) \ \
 \stackrel{\widetilde{c}^{-N F^*_{k-1}}}{\longrightarrow} \
 H^1(\calO_{b^*_{k}(V)}) \ \
 \stackrel{\omeg_{V,k}}{\longrightarrow} \ \ \C$}}

 \put(0,5){\makebox(0,0)[l]{$
\eca^{-N F^*_{k-1}}(F_{k-1})
$}}

\put(30,22){\makebox(0,0){$\downarrow$}}\put(30,20){\makebox(0,0){$\downarrow$}}


\qbezier(85,5)(140,8)(195,25)
\put(195,25){\vector(3,1){10}}
\end{picture}

Though in (\ref{eq:diagr4})
this factorization through $\eca^{-N F^*_{k-1}}(F_{k-1})$ exists (and it is nonconstant), a factorization
through $\eca^{-N F^*_{k-1}}(F_{k-1})\to H^1(\calO_{F_{k-1}})$ definitely does not exists (because, e.g.,
$H^1(\calO_{F_{k-1}})=0$). On the other hand, a factorization through a non-trivial quotient of
$H^1(\calO_{b^*_{k}(V)})=H^1(\calO_V)$ do exists, a fact which will be crucial later.
This is what we explain next.

\bekezdes \label{bek:factorization}
 In the space of resolution $\tX_k$
 let $U \subset \tX_k$ be a small
 tubular neighbourhood of the exceptional curve
 $E_U:= E\cup (\cup_{i=1}^{k-1} F_i)$.
 Let $\Gamma_U$ be the dual graph of $E_U$.
 (Note that contracting $E_U$
 in $U$ provides a singularity with different topological type than $\Gamma$, one of its
 dual graphs is $\Gamma_U$.)
 One can restrict sheaves/bundles from $\tX_k$ to $U$. At  cycle level one has
 the homological projection
 $\pi_U(\sum_vn_vE_v+\sum_{i=1}^km_iF_i):=\sum_vn_vE_v+\sum_{i=1}^{k-1}m_iF_i$.
 One also has the cohomological restriction $R_U:L'(\Gamma)\to L'(\Gamma_U)$
 (dual to the natural homological injection of
 cycles);   e.g.
 the restriction $R_U(F^*_{k-1})$ of $F^*_{k-1}$  is the antidual rational cycle
 $F^*_{k-1}(\Gamma_U)$
 associated with $F_{k-1}$
 in the lattice of $\Gamma_U$.
 Then, for both $V=\tZ$ or $Z$, one has the natural injection
 (which, for $V=\tZ$ and $Z$ fit in a commutative diagram):
 $\eca^{-N F^*_{k-1}}(b^*_k(V))$ is a Zariski open set in
  $\eca^{-N R_U(F^*_{k-1})}(\pi_U(b^*_k(V)))$.
Indeed, both of them depend only on the multiplicity $m_{k-1}$ of $F_{k-1}$ in
$b^*_k(V)$ and $\pi_U(b^*_k(V))$ (which are equal),
the second  set contains divisors up to the equation of $m_{k-1}F_{k-1}$
supported on $F_{k-1}\setminus F_{k-2}$ with total multiplicity
$N$, while in the first set consists of those divisors of the second set
whose support does not contain   $F_{k-1}\cap F_k$.

 On the other hand, the natural epimorphism
  $\rho_V: H^1(\calO_{b^*_k(V)})\to H^1(\calO_{\pi_U(b^*_k(V))})$ usually is not a monomorphism. However, one has the following fact.

\begin{lemma}\label{lem:restrict} 
$\omeg_{V,k}:H^1(\calO_{b^*_k(V)})\to \C$ factors through $\rho_V
 :H^1(\calO_{b^*_k(V)})\to H^1(\calO_{\pi_U(b^*_k(V))})$.
\end{lemma}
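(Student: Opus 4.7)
The plan is to identify $\ker(\rho_V)$ explicitly and then show that $\omeg_{V,k}$ annihilates it by a Laufer-pairing computation controlled by the choice of $k$. First, I would write down the short exact sequence on $\tX_k$
\begin{equation*}
0\to \calO_{m_kF_k}\!\bigl(-\pi_U(b^*_k(V))\bigr) \to \calO_{b^*_k(V)} \to \calO_{\pi_U(b^*_k(V))} \to 0,
\end{equation*}
where $m_k$ denotes the multiplicity of $F_k$ in $b^*_k(V)$. Passing to the long exact sequence, $\ker(\rho_V)$ is identified with the image of $H^1\!\bigl(\calO_{m_kF_k}(-\pi_U(b^*_k(V)))\bigr)$; hence any $[\alpha]\in\ker(\rho_V)$ admits a \v{C}ech representative whose cocycle values $\alpha_{ij}$ lie in this subsheaf, whose set-theoretic support is the single curve $F_k$.

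Next I would compute $\omeg_{V,k}([\alpha])$ using the realization of $\omeg_{V,k}$ from \ref{bek:k}: namely, as the Laufer pairing (\ref{eq:Stokes}) against the pulled-back form $b^*_k(\tomega)\in H^0(\tX_k\setminus E_{\tX_k},\Omega^2_{\tX_k})$. The crucial input is that by the minimality of $k$ this form has \emph{no} pole along $F_k$, its only pole near $F_k$ being the first-order pole along $F_{k-1}$. Choosing a \v{C}ech cover of $\tX_k$ adapted to the bidisc format of \ref{ss:Integ}, the pairing breaks into local torus integrals. On any bidisc disjoint from $F_k$ the representative $\alpha_{ij}$ vanishes identically, since the subsheaf is supported on $F_k$. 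On a bidisc meeting $F_k$, take local coordinates with $F_k=\{u=0\}$: both $\alpha_{ij}$ (holomorphic in $u$, whether the bidisc is type 1 or type 2) and $b^*_k(\tomega)$ have Laurent expansions with nonnegative exponent in $u$, so their product cannot contribute a $u^{-1}v^{-1}\,du\wedge dv$ term, and the torus integral is zero.

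The main obstacle I anticipate is the type-1 bidisc at the unique crossing point $F_k\cap F_{k-1}$, where $b^*_k(\tomega)$ does carry a first-order pole along $F_{k-1}=\{v=0\}$. The argument there rests on separating the two directions: the pole is purely in $v$, transverse to $F_k$, so the Laurent exponent of $b^*_k(\tomega)$ in $u$ remains nonnegative, while the cocycle $\alpha_{ij}$ is regular in both $u$ and $v$ on the full bidisc; together these still force the residue to vanish. Summing the vanishing contributions across all bidiscs of the cover yields $\omeg_{V,k}([\alpha])=0$, i.e.\ $\omeg_{V,k}$ factors through $\rho_V$, as required.
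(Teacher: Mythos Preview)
Your argument is correct and takes a genuinely different route from the paper's proof. The paper identifies $\ker(\rho_V)$ via the Abel map machinery: it invokes \cite[Theorem 6.1.9]{NNI} to show that the linear direction $A_0$ of the smallest affine subspace $A\supset{\rm im}(\widetilde{c}^{-F^*_k})$ is exactly $\ker(\rho_V)$; since $\omeg_{V,k}$ is (by the defining property of $k$) constant on ${\rm im}(\widetilde{c}^{-F^*_k})$, it is constant on $A$, hence annihilates $A_0=\ker(\rho_V)$. Your proof instead bypasses the structure theorem entirely: the short exact sequence $0\to\calO_{m_kF_k}(-\pi_U(b^*_k(V)))\to\calO_{b^*_k(V)}\to\calO_{\pi_U(b^*_k(V))}\to 0$ already exhibits $\ker(\rho_V)$ as classes with \v{C}ech representatives supported on $F_k$, and then the Laufer integral vanishes bidisc-by-bidisc because $b^*_k(\tomega)$ has nonnegative $u$-exponent along $F_k$. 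At the crossing bidisc $F_k\cap F_{k-1}$ your observation that the $u$-residue is forced to vanish (the pole being purely in the $v$-direction) is what makes the computation close; one could also note that the twist by $-\pi_U(b^*_k(V))$ forces the cocycle to vanish to order $m_{k-1}\geq 1$ along $F_{k-1}$, which overkills the simple pole.

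What each approach buys: the paper's argument stays inside the Abel-map formalism and makes the later ``geometric interpretation'' (that $\omeg_{V,k}^U=\langle\cdot,[b^*_k\tomega|_U]\rangle$) a corollary rather than an ingredient, at the cost of importing a nontrivial result from \cite{NNI}. Your argument is more elementary and self-contained --- it uses only the exact sequence and the local residue calculus already set up in \ref{ss:Integ} --- but it leans on the fact (implicit in the choice of $\tomega$ in Lemma \ref{lem:AC1}) that the Laufer pairing with $b^*_k(\tomega)$ really does descend to $H^1(\calO_{b^*_k(V)})$ for $V=Z$ and not only for $V=\tZ$; this is true by construction of $\tomega$ but is worth making explicit.
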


\begin{proof}
First, we concentrate on the map $\widetilde{c}^{-F^*_k}:\eca^{-F^*_k}(b^*_k(V))\to H^1(\calO_{b^*_k(V)})$.
Let $A$ be the smallest affine subspace
of $H^1(\calO_{b^*_k(V)})$  which contains ${\rm im}(\widetilde{c}^{-F^*_k})$, and let $A_0$
be the parallel linear subspace of the same dimension.  As above, we denote the sum
$A+\cdots +A $ ($m$ times) by $mA$, clearly all of these affine subspaces
have the same dimension, and are parallel
to each other. Next, consider also the
`multiples'  $\widetilde{c}^{-mF^*_k}:\eca^{-mF^*_k}(b^*_k(V))\to H^1(\calO_{b^*_k(V)})$ (cf.
\cite[\S 6]{NNI}, or see \ref{bek:multstr}).
Therefore,
${\rm im}(\widetilde{c}^{-mF^*_k})\subset mA$, and in fact, by \cite[Theorem 6.1.9]{NNI}, for $m\gg 0$, they agree. Furthermore, by the same theorem, $A_0=\ker( \rho_V)$.

By the choice of $k$, $\omeg_{V,k}$ restricted on the image of $\widetilde{c}^{-F^*_k}$ is constant,
which means that $\omeg_{V,k}|A$ is constant, or
$A_0\subset \ker(\omeg_{V,k})$.  Hence $\ker(\rho_V)\subset \ker(\omeg_{V,k})$, and $\omeg^U_{V,k}$
with $\omeg^U_{V,k}\circ \rho_V=\omeg_{V,k}$   exists.
\end{proof}

This lemma has the following geometric interpretation.
If $\omeg_{V,k}=\langle\cdot,[b^*_k\tomega]\rangle$ (at the level of $V$ or
$\tX_k$),
then $\omeg_{V,k}^U=\langle\cdot,[b^*_k\tomega|_U]\rangle$  at the level of $U$.
The form $b^*_k\tomega|_U$ again has order one along $F_{k-1}$ and
 all the local integration formulas along $E_U$ are the same.

\bekezdes\label{bek:D} Next, we concentrate on the divisor $D\in \eca^{\tilde{l}}(Z)$ and on the line bundle $\calO_Z(l')=\calO_Z(D)$.
As the center of blow up of $b_1$ is generic on $E_w$,
we can assume that it is not in the support of $D$. This guarantees that the
divisor $D$ lifts canonically into any of the spaces $\eca^{b_k^*(\tilde{l})}(b_k^*(Z))$
(still denoted by $D$),
and the germs
$(\eca^{\tilde{l}}(Z) ,D)$ and $(\eca^{b_k^*(\tilde{l})}(b_k^*(Z)),D)$ are canonically isomorphic.

Furthermore, this germ is preserved under the restriction to $U$ (see also the argument
from \ref{bek:factorization}),
hence all these facts together with the existence of factorization from Lemma \ref{lem:restrict} can be inserted in the following commutative diagram:

\vspace*{5mm}

\begin{equation}\label{eq:diagr1} \
\end{equation}

\vspace*{-2cm}

\begin{picture}(400,100)(-100,-10)
\put(0,75){\makebox(0,0)[l]{$
\ \ \ (\eca^{\tilde{l}}(Z),D) \ \ \ \ \ \ \ \
 \stackrel{\widetilde{c}^{l'}}{\longrightarrow} \ \ \ \
 H^1(\calO_Z)\ \  \ \ \ \
 \stackrel{\omeg}{\longrightarrow} \ \ \ \ \ \, \C$}}

 \put(-5,40){\makebox(0,0)[l]{$
(\eca^{b_k^*(\tilde{l})}(b_k^*(Z)),D) \ \
 \stackrel{\widetilde{c}^{b^*_k(l')}}{\longrightarrow} \ \
 H^1(\calO_{b^*_k(Z)}) \ \ \
 \stackrel{\omeg_k}{\longrightarrow} \ \ \ \ \ \ \C$}}

\put(50,55){\makebox(0,0){$\uparrow$}}
\put(140,57){\makebox(0,0){$b'_n$}}
\put(158,55){\makebox(0,0){$\simeq$}}\put(58,55){\makebox(0,0){$\simeq$}}
\put(150,55){\makebox(0,0){$\uparrow$}}
\put(230,55){\makebox(0,0){$\uparrow$}}
\put(237,55){\makebox(0,0){$\simeq$}}

\put(50,20){\makebox(0,0){$\downarrow$}}
\put(140,22){\makebox(0,0){$\rho_Z$}}
\put(58,20){\makebox(0,0){$\simeq$}}
\put(150,20){\makebox(0,0){$\downarrow$}}
\put(230,20){\makebox(0,0){$\downarrow$}}
\put(237,20){\makebox(0,0){$\simeq$}}

 \put(-43,5){\makebox(0,0)[l]{$
(\eca^{R_U(b_k^*(\tilde{l}))}(\pi_U(b_k^*(Z))),D) \
 \stackrel{\widetilde{c}^{R_Ub^*_k(l')}}{\longrightarrow} \
 H^1(\calO_{\pi_U(b^*_k(Z))}) \
 \stackrel{\omeg_k^U}{\longrightarrow} \ \, \C$}}

\end{picture}

This diagram shows that $\omeg_k\circ T_D( \widetilde{c}^{b_k^*(l')}(b^*_k(Z)))=0$ and also
\begin{equation}\label{eq:deriv}
\omeg_k^U\circ T_D(\widetilde{c}^{R_U(b_k^*(l'))}(\pi_U(b^*_k(Z)))=0.
\end{equation}
\bekezdes\label{bek:wk}
On $b^*_k(Z)$ now  we have the pullback line bundle
$b^*_k(\calO_Z(l'))=b^*_k(\calO_Z(D))=\calO_{b^*_k(Z)}(D)$.
 \begin{lemma}\label{lem:pullbacknat}
$b^*_k(\calO_{\tX}(l'))=\calO_{\tX_k}(b^*_k(l'))$, that is, the pullback of the natural line bundle
$\calO_{\tX}(l')$
 is the natural line bundle associated with the Chern class $b^*_k(l')$.
 Therefore,
 $b^*_k(\calO_{Z}(l'))=\calO_{\tX_k}(b^*_k(l')|_{b^*_k(Z)})$
  (which will be denoted by $\calO_{b^*_k(Z)}(b^*_k(l'))$).
\end{lemma}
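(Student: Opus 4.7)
The plan is to compare the two line bundles $\calL_1:=b^*_k(\calO_{\tX}(l'))$ and $\calL_2:=\calO_{\tX_k}(b^*_k(l'))$ by their Chern classes and by a suitable power, and then conclude using torsion-freeness of $\pic^0$. First I would note that since $b_k$ is a composition of blow-ups at smooth (non-nodal) points, the pullback on $H^2$ sends $l'\in L'=H^2(\tX,\Z)$ to a well-defined class $b^*_k(l')\in L'(\tX_k)=H^2(\tX_k,\Z)$, and by naturality of the first Chern class, $c_1(\calL_1)=b^*_k(l')=c_1(\calL_2)$. Hence $\calL_1\otimes \calL_2^{-1}\in \pic^0(\tX_k)=H^1(\tX_k,\calO_{\tX_k})$, which is a complex vector space and thus torsion-free.

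Next I would let $n$ denote the order of $[l']$ in $L'/L$, so that $nl'\in L$ is an integral cycle supported on $E$, and by the characterization of the natural line bundle $\calO_{\tX}(l')$ recalled in \ref{ss:natline} one has $\calO_{\tX}(l')^{\otimes n}=\calO_{\tX}(\mathrm{div}(nl'))$. Pulling back, and using the elementary fact that on a smooth surface the pullback of a line bundle associated with an honest divisor agrees with the line bundle of the pullback divisor, I would obtain
\begin{equation*}
\calL_1^{\otimes n}=b^*_k\bigl(\calO_{\tX}(\mathrm{div}(nl'))\bigr)=\calO_{\tX_k}\bigl(b^*_k(\mathrm{div}(nl'))\bigr).
\end{equation*}
On the other hand, $b^*_k(nl')=n\,b^*_k(l')$ is an integral class in $L(\tX_k)$ (so $n$ is a multiple of the order of $[b^*_k(l')]$ in $L'(\tX_k)/L(\tX_k)$), and the defining property of the natural line bundle on $\tX_k$ yields $\calL_2^{\otimes n}=\calO_{\tX_k}(n\,b^*_k(l'))=\calO_{\tX_k}(b^*_k(\mathrm{div}(nl')))$ as well.

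Consequently $\calL_1^{\otimes n}\cong \calL_2^{\otimes n}$, so $\calL_1\otimes \calL_2^{-1}$ is an $n$-torsion element of the torsion-free group $\pic^0(\tX_k)$; therefore it is trivial and $\calL_1=\calL_2$, proving the first identity. Finally the restricted statement $b^*_k(\calO_Z(l'))=\calO_{\tX_k}(b^*_k(l'))|_{b^*_k(Z)}$ follows from the already established equality on $\tX_k$ by restricting to $b^*_k(Z)$, since formation of $\calO_Z(l')$ from $\calO_{\tX}(l')$ is by restriction and commutes with pullback. I do not expect any single step here to present a genuine obstacle: the only subtlety is to keep straight that the natural-line-bundle construction is compatible with pullback precisely because $\pic^0(\tX_k)$ is torsion-free, which is the mechanism that pins down the ambiguity inherent in extracting an $n$-th root.
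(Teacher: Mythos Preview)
Your proof is correct and follows essentially the same approach as the paper's. The paper argues more tersely via the characterization that a bundle is natural if and only if some power has the form $\calO(l)$ for an integral cycle $l$: since $b^*_k(\calO_{\tX}(l'))^{\otimes n}=\calO_{\tX_k}(b_k^*(nl'))$, the pullback is natural with the correct Chern class; your version unpacks this into the explicit torsion-freeness argument for $\pic^0(\tX_k)$, which is exactly the mechanism underlying that characterization.
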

\begin{proof}
A bundle is natural if one of its power has the form $\calO(l)$ for some integral cycle $l$.
In this case the Chern classes of the two bundles agree. Furthermore,
if $nl'$ is integral for certain $n\in \Z_{>0}$, then
$b^*_k(\calO_{\tX}(l')^{\otimes n})=\calO_{\tX_k}(b_k^*(nl'))$, hence
$b^*_k(\calO_{\tX}(l'))$ is natural with Chern class $b_k^*(l')$.
\end{proof}

After all these preparations, we start with the key construction of the proof. We will construct a path in $\eca^{R_U(b^*_k(\tilde{l}))}(\pi_U(b^*_k(Z)))$ at $D$, $t\mapsto \gamma(t)$ (or, $\{D_t\}_t$ with $D_0=D$)
 with the following properties.
Firstly, by the choice of $\omeg$ and (\ref{eq:deriv})
 $\omeg\circ \widetilde{c}\circ \gamma$
must have zero derivative at $t=0$.  On the other hand, we will compute by integration explicitly
$\omeg\circ \widetilde{c}\circ \gamma$
and we will show that its  linear part is nontrivial, hence its derivative
 at $t=0$ is nonzero, a fact which leads to a contradiction.

 The local  path of divisors will be constructed via a deformation, based on section \ref{ss:1def}.

 \bekezdes\label{bek:specialdef}
 {\bf A special deformation of the analytic structure of $\calO_{\tX_k}$.} \

Let $(\tX_k, E\cup\cup_{i=1}^{k}F_i)$ be the resolution as in \ref{bek:szam},
with the choice  of $k$ as in \ref{bek:k}. Here we concentrate on the exceptional components
$F_{k-1}$ and $F_k$, where $F_k$ is obtained by blowing up a generic point $P$. (If $k=1$ then $F_{k-1}=E_w$.)
Then for the pair $(F_{k-1},F_k)$ we apply the construction of section \ref{ss:1def}, that is,  we move
$F_k$ and its intersection point with $F_{k-1}$ locally along $F_{k-1}$. In this way we obtain
a 1--parameter family of deformations of the resolution $\tX_k$, denoted by
$\omegl_k:(\widetilde{\calx}_k,\tX_k)\to (\C,0)$, with fibers $\tX_{k,t}$.
In $\tX_{k,t}$ the exceptional curve has components
$E\cup\cup_{i=1}^{k-1}F_i\cup F_{k,t}$. If we blow down the $F$--type curves in $\tX_{k,t}$ we get
a resolution $\tX_t$, they form a family $(\widetilde{\calx},\tX)$.
If we contract all the exceptional curves we get a family of   singularities
$\{(X_t,o)\}_t$. Since the analytic structure we started with is generic,
the  geometric genus $h^1(\calO_{\tX_{k,t}})$ stays constant
and the deformation blows down to  a deformation
$(\calx,X)\to (\C,0)$ with fibers $X_t$ (cf. \ref{ss:1def}).
We denote the contraction $\widetilde{\calx}_k\to \widetilde{\calx}$ by the same symbol $b_k$.

We assume that the base space of $\omegl$ is so small that the universal map $(\C,0)
\to Q$ to the base space of  a complete deformation omits the discriminant $\Delta(\tir)$; this fact is guaranteed by the choice of the generic structure of the singularity.

Therefore, for the very same $l'\in L'$ (which provides
 the bundle $\calO_Z(l')$)
we can consider the universal line bundles constructed in Lemma \ref{lem:natline}, namely
$\calO_{\widetilde{\calx}_k}(b^*_k(l'))\in \pic(  \widetilde{\calx}_k)$ and
$\calO_{\widetilde{\calx}}(l')\in \pic(  \widetilde{\calx})$. By similar argument as in Lemma
\ref{lem:pullbacknat} we have $b_k^*(\calO_{\widetilde{\calx}}(l'))=
\calO_{\widetilde{\calx}_k}(b^*_k(l'))$.
The  restriction to the fibers of the deformations
are the natural line bundles of the fibers.


Corresponding to the irreducible exceptional curves
 $\{E_v\}_v$ and $\{F_i\}_{i=1}^k$ in $\tX_{k}$
we have the irreducible exceptional surfaces
$\{\cale_v\}_{v}$ and $\{\calf_i\}_{i=1}^k$ in $\widetilde{\calx}_{k}$.
(Here $(\calf_n)_t=F_n$ for $n<k$ but $(\calf_k)_t=F_{k,t}$.)
If $Z=\sum_{v}r_vE_v$ then
$b^*_k(Z)=\sum_{v}r_vE_v+r_w\sum_{i=1}^kF_i$.
Let  we set
$b^*_k(\calz)=\sum_{v\in \calv}r_v\cale_v+r_w\sum_{i=1}^k\calf_i$. Then we restrict
$\calO_{\widetilde{\calx}_{k}}(b^*_k(l'))$ to $b^*_k(\calz)$ and we get
$\calO_{b^*_k(\calz)}(b^*_k(l'))\in \pic(b^*_k(\calz))$.

 Let  $\omegl: b^*_k(\calz)\to (\C,0)$ be the projection of the deformation.
The central fiber is  $\calO_{b^*_k(Z)}(b_k^*(l'))$.
In particular, over
 $t=0$ the bundle $\calO_{b_k^*(Z)}(b_k^*(l'))$ has a global section $s$ whose divisor is  $D$
 (by the definition of $D$  from \ref{ss:proofAC} and identification (\ref{eq:diagr1})).
Then Lemma \ref{lem:extension} implies  the following fact.

\begin{lemma}\label{lem:extension2} There exists an extension
${\mathfrak s}\in H^0(b_k^*(\calz), \calO_{b^*_k(\calz)}(b^*_k(l')))$ of
 $s\in H^0(b_k^*(Z), \calO_{b^*_k(Z)}(b^*_k(l')))$ such that  ${\mathfrak s}_0=s$.
 Furthermore,  ${\mathfrak s}_t$ has no fixed component either.
\end{lemma}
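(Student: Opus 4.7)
The plan is to apply Lemma \ref{lem:extension} directly to the 1-parameter deformation $\omegl_k : (\widetilde{\calx}_k, \tX_k) \to (\C, 0)$ constructed in \ref{bek:specialdef}. First I would record that this deformation, reduced to $\Gamma(\tir)$ for some $\tir \gg 0$ satisfying Theorem \ref{th:La1}, is the pullback of a complete deformation of $\calO_{\tX_k}|_{b^*_k(Z(\tir))}$ via a classifying map $f : (\C, 0) \to Q$. Since $\tX$ (and hence $\tX_k$, which is obtained from $\tX$ by a sequence of blow ups at generic points) has generic analytic structure in the sense of Definition \ref{def:GEN}, the point $f(0) \in Q$ lies in $Q \setminus \Delta(\tir)$. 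After shrinking the base of $\omegl_k$ we may assume that $f$ maps the entire disc into the locus where the relevant $h^0$ and $h^1$ of the restricted natural line bundle $\calO_{b^*_k(Z)}(b^*_k(l'))$ are constant (only finitely many Chern classes and tower levels are involved, so Remark \ref{rem:generic}(b) applies).

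Next I would invoke Lemma \ref{lem:extension}(a) with $Z(r) = b^*_k(Z)$ and Chern class $b^*_k(l')$. The hypothesis is satisfied: by the discussion of \ref{bek:D}, the divisor $D$ transfers canonically to $\eca^{b^*_k(\tilde l)}(b^*_k(Z))$ and corresponds to a section $s$ of $\calO_{b^*_k(Z)}(b^*_k(l'))$ without fixed components (this is hypothesis (a) of Theorem \ref{th:CLB1}(I), which we are assuming). Constancy of $t \mapsto h^0(b^*_k(Z)_t, \calO_{b^*_k(Z)_t}(b^*_k(l')))$ combined with the base change / Grauert theorem invoked in the proof of Lemma \ref{lem:extension}(a) implies that $R^0(\omegl_k)_* \calO_{b^*_k(\calz)}(b^*_k(l'))$ is locally free and commutes with restriction to fibers. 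Consequently $s$ lifts to a global section $\mathfrak{s}$ of $\calO_{b^*_k(\calz)}(b^*_k(l'))$ with $\mathfrak{s}_0 = s$, giving the existence assertion.

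The ``no fixed component on nearby fibers'' assertion is then Lemma \ref{lem:extension}(b) applied verbatim to our setting, once we shrink the base a bit further. I do not expect a serious obstacle here: the only subtlety is justifying that the map $f$ can be chosen to avoid the countably many discriminants appearing in the definition of $\Delta(\tir)$, but this is automatic because our genericity hypothesis on $\tX$ provides a whole (analytic germ of a) neighbourhood of $f(0)$ in $Q \setminus \Delta(\tir)$ on which the finitely many cohomological invariants we need to control are already constant. So the statement reduces to an essentially cosmetic reinterpretation of Lemma \ref{lem:extension} in the context of the distinguished 1-parameter deformation of section \ref{ss:1def}.
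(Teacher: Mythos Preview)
Your proposal is correct and follows the same approach as the paper, which simply states ``Then Lemma \ref{lem:extension} implies the following fact'' without further detail. One small imprecision: you should not assert that $\tX_k$ itself is generic in the sense of Definition \ref{def:GEN} (genericity is tied to a fixed graph, and $\tX_k$ has a different graph than $\tX$); rather, the needed $h^1$--stability for $\calO_{b^*_k(Z)_t}(b^*_k(l'))$ follows from the genericity of $\tX$ via the identification $h^i(b^*_{k,t}(Z_t),\calO_{b^*_{k,t}(Z_t)}(b^*_{k,t}(l')))=h^i(Z_t,\calO_{Z_t}(l'))$ (Leray spectral sequence and Lemma \ref{lem:pullbacknat}), together with the fact that the deformation $\omegl_k$ blows down to a deformation of $\tX$ whose base maps into $Q\setminus\Delta(\tir)$.
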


Let $D_t$ be the restriction of the divisor of ${\mathfrak s}$ to the fiber over $t$.

Since the support of $D=D_0$  is disjoint with the center of
$b_1$, the same is true for each $D_t$ (for $|t|\ll 1$).
Hence, in this way we get a  path  germ
$\gamma$  with $\gamma(t)\in \calO_{b^*_k(\calz)_t}(D_t)=\calO_{b^*_{k,t}(Z)}(D_t)=\calO_{b^*_{k,t}(Z)}(b^*_{k,t}(l'))$, where $b_{k,t}$ is the
contraction/blow up $\tX_{k,t}\to \tX_{t}$.

Note also that in the cycles $b_{k,t}^*(Z)$ the curve $F_{k,t}$ (with its stable multiplicity) is `moving' along the deformation,
the other components with their multiplicities  are stable, and the divisors $D_t$ are supported
by this stable part (but they might  move).  More precisely,
 by the construction from \ref{bek:specialdef} we obtain that
$\pi_U(b^*_k(\calz)_t)$ is $t$--independent, and it equals $\pi_U(b^*_k(Z))$.
(It is worth to mention that $\pi_U(b^*_k(Z))$ is not the same as $b^*_{k-1}(Z)$, they differ even topologically at Euler number level.)

Then, by the choice of $\omeg$  and  $D$ and the chain rule
(compare also with (\ref{eq:diagr1}) and (\ref{eq:deriv}):
\begin{equation}\label{eq:dergamma}\begin{split}
&\frac{d}{dt}\Big|_{t=0}(\omeg_k\circ \widetilde{c}^{b^*_{k,t}(l')}(b^*_{k,t}(Z))
( \gamma(t))=
\frac{d}{dt}\Big|_{t=0}(\omeg_k^U\circ \widetilde{c}^{R_U(b^*_{k,t}(l'))}
(\pi_U(b^*_{k,t}(Z)))( \gamma(t))\\ =&
T_D(\omeg_k^U\circ \widetilde{c}^{R_U(b^*_{k}(l'))}(\pi_U(b^*_{k}(Z))) (\frac{d\gamma}{dt}\Big|_{t=0})
=\omeg_k^U\circ T_D(\widetilde{c}^{R_U(b^*_{k}(l'))}(\pi_U(b^*_{k}(Z))) (\frac{d\gamma}{dt}\Big|_{t=0})
=0.\end{split}
\end{equation}
The same is valid if we replace the family $D_t$ by any of its  multiple $N\cdot D_t$.

\bekezdes \label{bek:sum} Let us summarize what we have. On each $b^*_{k,t}(Z)$
we can consider the restricted natural line bundle
$\calO_{b^*_{k,t}(Z)}(b^*_{k,t}(l'))$. Then, if we take its restriction to $U$,
namely $\calO_{b^*_{k,t}(Z)}(b^*_{k,t}(l'))|_U\in \pic(\pi_U(b^*_k(Z)))$
and we shift it back with the natural line bundle
$\calO_{\pi_U(b^*_k(Z))}(R_U(b^*_k(l')))^{-1}$ we get a path in
$\pic^0(\pi_U(b^*_k(Z)))=H^1(\calO_{\pi_U(b^*_k(Z))})$, whose differential at $t=0$
is in the kernel of $\omeg^U_k$.

Now, let us compute these objects directly, in fact, for a certain  $N$--multiple of
the corresponding bundles. Let $N$ be an integer so that
$Nl'=\sum_v Nl'_vE_v$ is an integral cycle and write $\ell:=Nl'_w$. Then,
$Nb^*_k(l')=\sum_v Nl_v'E_v+\ell \sum _{i=1}^k F_i$. Furthermore,
$(\calO_{b^*_{k,t}(Z)}(b^*_{k,t}(l')))^N$, being natural with integral Chern class,
should equal
$\calO_{b^*_{k,t}(Z)}(\sum_v Nl'_vE_v+\ell\sum _{i=1}^k F_{i,t})$ and
its restriction to $U$ is
$\calO_{\pi_U(b^*_{k}(Z))}(\sum_v Nl'_vE_v+\ell\sum _{i=1}^{k-1} F_{i}+\ell F_{k,t})$.
By the same reason, $\calO_{\pi_U(b^*_k(Z))}(R_U(b^*_k(l')))^{-N}$
is $\calO_{\pi_U(b^*_{k}(Z))}(\sum_v Nl'_vE_v+\ell\sum _{i=1}^{k} F_{i})$.
Hence, the $N$--multiple of the path is
$\calO_{\pi_U(b^*_{k}(Z))}(\ell(P_t-P))$, where  $P_t=F_{k,t}\cap F_{k-1}$,
$P=F_k\cap F_{k-1}$ as above. By assumption on $l'_w$ we have $\ell\not=0$.

That is,  $\calO_{\pi_U(b^*_{k}(Z))}(\ell P_t-\ell P)$ is a path in
$ H^1(\calO_{\pi_U(b^*_k(Z))})$ and (\ref{eq:dergamma}) reads as
\begin{equation}\label{eq:dergamma3}
\frac{d}{dt}\Big|_{t=0}(\omeg^U_{k} (\calO_{\pi_U(b^*_{k}(Z))}(\ell P_t-\ell P)) =0.
\end{equation}
Next we compute the left hand side of (\ref{eq:dergamma3}) in a  different way.

By Lemma \ref{lem:restrict} (and comment after it)
$\omeg^U_k=\langle \cdot, [b^*_k\tomega |_U\rangle$, and the form $b^*_k\tomega |_U$
has a pole of order one along $F_{k-1}$. Moreover, $P$ is a generic point of
$F_{k-1}$ and in a local neighborhood $B$ of $P$ in local coordinates $(u,v)$ one has
$F_{k-1}\cap B=\{u=0\}$, $P_t=\{v+t=0\}$. Hence (\ref{eq:Tomega2}) with $o=1$ reads as
\begin{equation}\label{eq:dergamma4}
\omeg^U_{k} (\calO_{\pi_U(b^*_{k}(Z))}(\ell P_t-\ell P)) = t\ell c +\{\mbox{higher order terms}\}\ \ \  (c\in \C^*),
\end{equation}
whose derivative at $t=0$  is non--zero. This contradicts (\ref{eq:dergamma3}).

\subsection{The proof of part (II)}\label{ss:PartII}
 Note that the equalities for $i=0$ and $i=1$ are equivalent by Riemann--Roch.
 We will prove (II) in three steps.

 \bekezdes\label{ss:PartII1} {\bf The proof of part (II), case 1.}\
Assume that $l'_v<0$ for any $v\in \calv(|Z|)$ and   $-\tilde{l}\in\calS'(|Z|)\setminus \{0\}$.

Then part (I) --- already proved --- can be applied.

First assume that the equivalent assumptions {\it (a)-(b)-(c)} of (I) are satisfied.
Then by \cite[Th. 4.1.1]{NNI} $h^1(Z,\calL_{gen})=0$. Hence we have to show that
 $h^1(Z,\calO_Z(l'))=0$ too. Choose an element $s\in H^0(Z,\calO_Z(l'))_{reg}$ with divisor
 $D$  and consider the  exact sequence of sheaves
 $0\to \calO_Z\stackrel{\times s}{\longrightarrow} \calO_Z(l') \to \calO_D(D)\to 0$ (where the second morphism is  multiplication by $s$).

 Then one has the cohomology exact sequence
 $$H^0(Z,\calO_Z(l'))\to \calO_D(D)\stackrel{\delta}{\longrightarrow} H^1(\calO_Z)\to
 H^1(Z,\calO_Z(l'))\to 0.$$
 Then $\delta$ can be identified with $T_D(c^{\tilde{l}})$ (see \cite[Prop. 3.2.2]{NNI}, or
  \cite[p. 164]{MumfordCurves}, \cite[Remark 5.18]{Kl}, \cite[\S 5]{Kleiman2013}).
 Since $T_D(c^{\tilde{l}})$ is onto by (I)(c),  $h^1(Z,\calO_Z(l'))=0$ follows.

Next, assume that 
the equivalent assumptions of (I) are not satisfied.
That is, $H^0(Z,\calO_Z(l'))_{reg}=H^0(Z,\calL_{gen})_{reg}=\emptyset$. These facts read as
$h^0(Z,\calO_Z(l'))=\max_v\{h^0(Z-E_v,\calO_Z(l'-E_v))\}$ and
$h^0(Z,\calL_{gen})=\max_v\{h^0(Z-E_v,\calL_{gen}(-E_v))\}$.
But, by induction (applied for part (II) similarly as in the proof of case $(b)\Rightarrow(c)$ in
\ref{bek:kezdes}, see also  \ref{bek:indstep})
$\max_v\{h^0(Z-E_v,\calO_Z(l'-E_v))\}=
\max_v\{h^0(Z-E_v,\calL_{gen}(-E_v))\}$, hence
$h^0(Z,\calO_Z(l') )=h^0(Z,\calL_{gen})$ follows too.

 \bekezdes \label{ss:PartII2} {\bf The proof of part (II), case 2.}\
Assume that $l'_v<0$ for any $v\in \calv(|Z|)$ and  $\tilde{l}=0$.
(If this happens then necessarily $|Z|<E$. Recall also that $\calO_Z(l')$ is the restriction
of the natural line bundle $\calO_{\tX}(l')$ to $Z$.)

If $h^1(\calO_Z)=0$ then $\calL_{gen}=\calO_Z(l')$, hence the statement follows.
If $h^0(\calO_Z(l'))=0$ then by the semicontinuity of $\calL\mapsto h^0(Z,\calL)$ (cf.
\cite[Lemma 5.2.1]{NNI}) $h^0(\calL_{gen})=0$ too.

In the sequel we assume that $h^1(\calO_Z)\not=0$ and $h^0(\calO_Z(l'))\not=0$.

Assume that $H^0(Z,\calO_Z(l'))_{reg}\not=\emptyset$, that is, $\calO_Z(l')$ has a section without
fixed components. But, then by Chern class computation, this section  has no zeros,
hence
$\calO_Z(l')=\calO_Z$, see also (\ref{eq:Chernzero}).

{\it We claim that this identity $\calO_Z(l')=\calO_Z$ cannot happen for generic $(X,o)$. }

The argument runs similarly as the proof of $(a)\Rightarrow(c)$ in (I).

Since $h^1(\calO_Z)\not=0$ we can choose a nonzero functional $\omega\in H^1(\calO)^*$ for which
we can repeat the arguments from \ref{ss:proofAC}. In particular,
there exists $E_w\subset |Z|$ which satisfies Lemma \ref{lem:AC1}, we can consider the sequence of
blow ups as in \ref{bek:szam}, and we can choose $k$ as in \ref{bek:k}. Finally we consider the
deformation of singularities as in \ref{bek:specialdef}.
In this way we get a family of restricted line bundles
$\calO_{b^*_{k,t}(Z)}(b^*_{k,t}(l'))$, so that for $t=0$ the corresponding bundle is the
trivial one. We wish to show that for generic $t$ the corresponding term cannot be the trivial
bundle. Indeed, as in (\ref{eq:dergamma4})
we get that  $t\mapsto
\calO_{b^*_{k,t}(Z)}(b^*_{k,t}(l'))|_U\in \pic(\pi_U(b^*_k(Z)))$ is not constant.
This implies that
the path $t\mapsto b^*_{k,t}(\calO_Z(l'))=\calO_{b^*_{k,t}(Z)}(b^*_{k,t}(l'))$ cannot
give for all $t$ the trivial bundle  either
since otherwise its  restriction to $\pi_U(b^*_k(Z))$ would be  constant (since the restriction of the structures sheaf is the $t$--independent constant structure sheave).
In particular, for generic $t$ we have $\calO_{Z_t}(l')\not= \calO_{Z_t}$.


However, we can prove that in this situation necessarily
$h^1(\calO_{Z_t}(l'))< h^1(\calO_{Z_t})$ for generic $t$ (though the Chern classes agree),
hence $t=0$ is a jumping discriminant point of $l'\mapsto h^1(\calO_{Z_t}(l'))$, a fact which contradict the genericity.

Indeed, since $\calO_{Z_t}(l')\not=\calO_{Z_t}$
for generic $t$ (and $H^1(\calO_{Z_t})$ is constant nonzero),
 $\calO_{Z_t}(l')$ must have fix components
 (use $c_1(\calO_{Z_t}(l'))=0$ and  (\ref{eq:Chernzero})).
Let $E_u\in |Z|$ be a fix  component. Then $H^0(Z_t,\calO_{Z_t})\to H^0(E_u,\calO_{Z_t})=\bC$ is
surjective, while $H^0(Z_t,\calO_{Z_t}(l') )\to H^0(E_u,\calO_{Z_t}(l'))=\bC$ is zero.
Since their kernels have the same $h^0$ by the inductive step,
$h^0(\calO_{Z_t}(l'))< h^0(\calO_{Z_t})$, hence
the inequality follows by Riemann--Roch.
This proves the claim.

\vspace{2mm}

After  this discussion we can assume that
 $h^1(\calO_Z)\not=0$, $h^0(\calO_Z(l'))\not=0$, but $H^0(Z,\calO_Z(l'))_{reg}=\emptyset$.
 By (\ref{eq:Chernzero})  $\calL_{gen}\not=\calO_Z$ (since $\pic^0(\calO_Z)\not=0$), hence
 $H^0(Z,\calL_{gen})_{reg}=\emptyset$ too. Then we proceed as in the last paragraph  of
 \ref{ss:PartII1}, induction shows that
 $h^0(Z,\calO_Z(l') )=h^0(Z,\calL_{gen})$.

 \bekezdes\label{ss:PartII3} {\bf The proof of part (II), case 3.} \
Finally,  assume that $l'_v<0$ for all $v\in \calv(|Z|)$, and
$-\tilde{l}\not \in\calS'(|Z|)$.
Then there  exists $E_v$ in the support of $Z$
such that $(l',E_v)=(\tilde{l},E_v)<0$. Hence for any $\calL\in \pic^{\tilde{l}}(Z)$ the exact sequence
$0\to \calL(-E_v)|_{Z-E_v}\to \calL\to \calL|_{E_v}\to 0$  and vanishing
$H^0(\calL|_{E_v})=0$ give
$h^0(Z-E_v,\calL(-E_v))=h^0(Z,\calL)$. By this step we replaced the Chern class $\tilde{l}$ by
$\tilde{l}-E_v$.
After finitely many such steps we necessarily get a  new Chern class
in the corresponding Lipman cone (see e.g. \cite[Prop. 4.3.3]{trieste}).
Hence, in this way we reduced this third case to the first two cases.

\section{Applications. Analytic invariants}\label{s:appli}

\subsection{} In this section we will fix a resolution graph $\Gamma$
(hence, the lattice $L$ associated  with it as well),
and we treat singularities $(X,o)$, together with their resolution $\tX$ whose dual graph is $\Gamma$.
The goal is to list  some consequences of Theorem \ref{th:CLB1}: hence we will assume that $\tX$ is generic,
and we will provide combinatorial expressions for several analytic invariants in terms of $L$.
We will use the notations from the setup of \ref{ss:setup2}.

The first group of results provides topological formulae for the
{\bf cohomology of}  certain {\bf natural line bundles} over an arbitrary $Z>0$.

\begin{remark}\label{rem:cohGen} \ (a)
By \cite[Theorem 5.3.1]{NNI} for any $l'\in L'$ and $\calL_{gen}$ generic in $\pic^{R(l')}(Z)$
\begin{equation}\label{eq:cohGen}
h^1(Z,\calL_{gen})=\chi(-l')-\min _{0\leq l\leq Z, l\in L}\, \{\chi(-l'+l)\}.
\end{equation}
In particular, if $l'=\sum_{v\in \calv}l'_vE_v \in L'$ satisfies
$l'_v <0$ for any $v\in\calv(|Z|)$ and $\tX$ is generic then Theorem \ref{th:CLB1} gives
the following topological characterization for the cohomology of $\calO_Z(l')$
\begin{equation}\label{eq:cohNat}
h^1(Z,\calO_Z(l'))=\chi(-l')-\min _{0\leq l\leq Z, l\in L}\, \{\chi(-l'+l)\}.
\end{equation}
This will be extended in Theorem \ref{th:CohNat} for a larger family of  $l'$--values.

(b) Note that the identity $h^1(Z,\calO_Z(l'))=h^1(Z,\calL_{gen})$ (hence (\ref{eq:cohNat}) too)
is not valid for any $l'$ (that is, without some negativity condition regarding the coefficients of
 $l'$).
Indeed, assume e.g. that $|Z|=E$ and all the coefficients of $Z$ are very large,  and $l'=0$.
Then using the quadratic form of $\chi$ one has
$\min _{0\leq l\leq Z, l\in L}\, \{\chi(l)\}=\min _{ l\in L_{\geq 0}}\, \{\chi(l)\}$, hence
 $h^1(Z, \calL_{gen})=-\min _{l\in L_{\geq 0}}\, \{\chi(l)\}$ by (\ref{eq:cohGen}).
 But $h^1(Z,\calO_Z)=1-\min _{l\in L_{\geq 0}}\, \{\chi(l)\}$ whenever $(X,o)$ is not rational,  see Corollary \ref{cor:CohNat}.

 (c) Recall that if $-l'\in \calS'\setminus \{0\}$ then all the coefficients $l'_v$ of $l'$ are
 strict negative. However, if the support of $|Z|$ is strict smaller than $E$, then
  $-R(l')\in \calS'(|Z|)\setminus \{0\}$ does not necessarily imply that $l'_v <0$ for $v\in \calv(|Z|)$.
  (Take e.g. $Z=E_v$ a $(-2)$--curve, choose $E_u$ an adjacent vertex with it and set
  $l'=E_v+3E_u$. Then $-R(l')\in \calS'(E_v)\setminus \{0\}$ however $l'_v=1$.)
 \end{remark}
\bekezdes\label{bek:Lauferseq} {\bf The setup for generalization.}
 We construct the following `Laufer type computation sequence'  (see e.g. \cite{Laufer72} or
 \cite[Prop. 4.3.3]{trieste}).
 We start with a class $l'\in L'$ and an effective  cycle $Z$ with $|Z|\subset E$.
 Let $\tilde{l}\in L'(|Z|)$ be the restriction of $l'$ as in Theorem \ref{th:CLB1}.

 Assume that  $-\tilde{l}\not\in \calS'(|Z|)$. Then there exists
$E_w\subset |Z|$ so that $(l',E_w)<0$.
Then, for both line bundles $\calL=\calL_{gen}$ and $\calL=\calO_Z(l')$ of $\pic^{\tilde{l}}(Z)$
one can consider the exact sequence $0\to \calL(-E_w)|_{Z-E_w}\to \calL\to \calL|_{E_w}\to 0$,
hence $h^0(\calL(-E_w)|_{Z-E_w})=h^0(\calL)$. Hence whenever
$h^0(\calO_Z(l'-E_w)|_{Z-E_w})=h^0(\calL_{gen}(-E_w)|_{Z-E_w})$ one also has
$h^0(\calO_Z(l'))=h^0(\calL_{gen})$.

Let us construct the following sequence of pairs
$(l'_k,Z_k)_{k=0}^t$. By definition, $(l'_0,Z_0)=(l',Z)$
the objects we started with. If   $-\tilde{l}=-R(l')\not\in \calS'(|Z|)$, then define
$(l'_1,Z_1):=(l'-E_w,Z-E_w)$ for some $E_w\subset |Z|$ with $(E_w,l')<0$. If  $-\tilde{l}_1:=-R(l'_1)\not\in \calS'(|Z_1|)$
we repeat the procedure, otherwise we stop. After finitely many steps necessarily
 $-\tilde{l}_t:=-R(l'_t)\in \calS'(|Z_t|)$ (here $Z_t=0$ is also possible).
(The choice of the sequence is not unique, however
by similar argument as in  \cite{Laufer72} or
 \cite[Prop. 4.3.3]{trieste}) one can show that the last term $(l'_t,Z_t)$ of the sequence is independent of all the choices: it is the unique $(l'-D,Z-D)$ with $D$ minimal such that
 $Z\geq D\geq 0$, $D\in L$, and $-(l'-D)\in \calS'(|Z-D|)$.)

\begin{theorem}\label{th:CohNat} Assume that $\tX$ is generic with fixed dual graph $\Gamma$, and we
choose  an effective   cycle  $Z$  and  $l'\in L'$.
Assume that the last term $(l'_t,Z_t)$ of the Laufer type computation sequence $\{(l'_k,Z_k)\}_{k=0}^t$
has the following property: if $l'_t=\sum_v l'_{t,v}E_v$, then $l'_{t,v}<0 $ for  any
$v\in \calv(|Z_t|)$. Then $h^i(Z,\calO_Z(l'))=h^i(Z,\calL_{gen})$
for a generic line bundle $\calL_{gen}\in\pic^{\tilde{l}}(Z)$ ($i=0,1$), i.e.
(\ref{eq:cohNat}) holds.
\end{theorem}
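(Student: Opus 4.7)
My plan is to proceed by induction on the length $t$ of the Laufer--type computation sequence $(l'_k,Z_k)_{k=0}^{t}$ attached to $(l',Z)$. The base case $t=0$ is immediate: here $(l',Z)=(l'_t,Z_t)$, the hypothesis of the theorem gives $l'_v<0$ for every $v\in\calv(|Z|)$, and this is exactly the setting of Theorem \ref{th:CLB1}(II), whose three sub-cases together yield $h^i(Z,\calO_Z(l'))=h^i(Z,\calL_{gen})$ for $i=0,1$.

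For the inductive step $t\geq 1$, fix some $E_w\subset |Z|$ with $(l',E_w)<0$, so that the reduced pair $(l'-E_w,Z-E_w)$ has a Laufer sequence of length $t-1$ ending at the same $(l'_t,Z_t)$, hence satisfies the hypothesis of the theorem. The crucial observation is that for \emph{any} $\calL\in\pic^{\tilde{l}}(Z)$ the short exact sequence
\begin{equation*}
0\to\calL(-E_w)|_{Z-E_w}\to\calL\to\calL|_{E_w}\to 0,
\end{equation*}
combined with $E_w\simeq\bP^1$ and $\deg(\calL|_{E_w})=(l',E_w)<0$, forces $H^0(E_w,\calL|_{E_w})=0$, and therefore $h^0(\calL)=h^0(\calL(-E_w)|_{Z-E_w})$. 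Applied to $\calL=\calO_Z(l')$ and to $\calL=\calL_{gen}$ this gives
\begin{equation*}
h^0(Z,\calO_Z(l'))=h^0(Z-E_w,\calO_{Z-E_w}(l'-E_w)), \quad h^0(Z,\calL_{gen})=h^0(Z-E_w,\calL_{gen}(-E_w)|_{Z-E_w}),
\end{equation*}
while the inductive hypothesis applied to $(l'-E_w,Z-E_w)$ furnishes
\begin{equation*}
h^0(Z-E_w,\calO_{Z-E_w}(l'-E_w))=h^0(Z-E_w,\calL^{(1)}_{gen})
\end{equation*}
for a generic $\calL^{(1)}_{gen}\in\pic^{R(l'-E_w)}(Z-E_w)$.

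Everything then closes up by a sandwich argument. Upper semi-continuity of $h^0$ on $\pic^{\tilde{l}}(Z)$ gives $h^0(\calL_{gen})\leq h^0(\calO_Z(l'))$, and upper semi-continuity on $\pic^{R(l'-E_w)}(Z-E_w)$, evaluated at the specific point $\calL_{gen}(-E_w)|_{Z-E_w}$, gives $h^0(\calL^{(1)}_{gen})\leq h^0(\calL_{gen}(-E_w)|_{Z-E_w})$. Chaining yields
\begin{equation*}
h^0(\calO_Z(l'))=h^0(\calO_{Z-E_w}(l'-E_w))=h^0(\calL^{(1)}_{gen})\leq h^0(\calL_{gen}(-E_w)|_{Z-E_w})=h^0(\calL_{gen})\leq h^0(\calO_Z(l')),
\end{equation*}
so every inequality is an equality and $h^0(Z,\calO_Z(l'))=h^0(Z,\calL_{gen})$ follows. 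Since both bundles share the Chern class $\tilde{l}$, Riemann--Roch on $Z$ carries this over to $h^1$, and combining with (\ref{eq:cohGen}) yields (\ref{eq:cohNat}).

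The step one might worry about is the claim that restricting a generic line bundle from $Z$ to $Z-E_w$ (after twisting by $-E_w$) yields a generic element of the target Picard variety, i.e.\ a dominance statement for the induced morphism. The sandwich sidesteps this entirely: the reduction $h^0(\calL)=h^0(\calL(-E_w)|_{Z-E_w})$ is valid for \emph{every} $\calL$ of Chern class $\tilde{l}$, not merely for natural or generic ones, so semi-continuity in both Picard varieties alone suffices to pinch the two generic values together, and Theorem \ref{th:CLB1}(II) is the only genuine analytic input.
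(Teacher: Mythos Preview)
Your proof is correct and follows essentially the same strategy as the paper's one-line argument (``Use Theorem \ref{th:CLB1}(II) and the discussion from \ref{bek:Lauferseq}''): induct along the Laufer sequence, reducing both $h^0(\calO_Z(l'))$ and $h^0(\calL_{gen})$ via the same exact sequence at each step, and invoke Theorem \ref{th:CLB1}(II) at the end.

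The one place where you are more careful than the paper is the issue you flag yourself: whether $\calL_{gen}(-E_w)|_{Z-E_w}$ is again generic. The paper's sketch implicitly uses that the restriction map $\pic^0(Z)\to\pic^0(Z-E_w)$ is surjective (immediate from $H^1(\calO_Z)\twoheadrightarrow H^1(\calO_{Z-E_w})$), so the image of a generic point is generic and one can apply the inductive hypothesis directly. Your sandwich argument via double semicontinuity is a clean alternative that avoids invoking this surjectivity; either route closes the gap.
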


\begin{proof}
Use Theorem \ref{th:CLB1}(II) and the discussion from \ref{bek:Lauferseq}.
\end{proof}

\begin{example}\label{ex:cohNat}
Let $\tX$ be generic, $Z$ an effective cycle and $l'\in L'$. Assume that $l'_v\leq 0$ for all
$v\in \calv(|Z|)$ and for any connected component $Z_{con}$ of $Z$ there exists
$v\in \calv$ adjacent with $Z_{con}$ with $l'_v<0$.
 (The adjacent condition is $|Z_{con}|\cap E_v \not=\emptyset$.)
 Then the conditions from Theorem \ref{th:CohNat} are satisfied, hence  $h^i(Z,\calO_Z(l'))=h^i(Z,\calL_{gen})$ and (\ref{eq:cohNat}) holds.

Indeed, first note that if for some vertex with $l'_v=0$ one has $(l',E_v)\geq 0$ then
$l'_u=0$  for all adjacent vertices $u$ of $v$. Hence, $(l',E_v)\geq 0$ for all vertices $v$ with
$l'_v=0$ contradicts the assumption. That is, there exists $v\in \calv(|Z|)$ so that $l'_v=0$ and $(l',E_v)<0$.

Then we construct the computation sequence as follows.
  At the first part of the computation sequence, at step
 $(l'_k,Z_k)$ we choose $E_{w(k)}$ so that $E_{w(k)}\subset |Z_k|$, the $E_{w(k)}$--coefficient of
 $l'_k$ is zero, and $(E_{w(k)},l'_k)<0$. After finitely many such steps we arrive to the situation when
 along the support of $Z_{k'}$ all the coefficients of $l'_{k'}$ will be strict negative. Then we can
 continue the algorithm arbitrarily.
\end{example}

\begin{corollary}\label{cor:h1Z}
If $\tX$ is generic with dual graph $\Gamma$  and  $|Z|$ is connected   then
\begin{equation}\label{eq:h1Z}
h^1(\calO_Z) = 1-\min_{0< l \leq Z,l\in L}\{\chi(l)\}=
 1-\min_{|Z|\leq l \leq Z,l\in L}\{\chi(l)\}.
\end{equation}

\end{corollary}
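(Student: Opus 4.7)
The plan is to deduce both equalities in \eqref{eq:h1Z} by combining a short exact sequence argument for $\calO_Z$ with Theorem \ref{th:CohNat} applied to the restricted natural line bundle on $Z-|Z|$ with Chern class $-|Z|\in L$.

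For the second equality I would argue purely combinatorially, independently of the analytic structure. Given $l\in L$ with $0 < l \le Z$ and $|l| \subsetneq |Z|$, the connectedness of $|Z|$ yields a vertex $v \in |Z|\setminus |l|$ adjacent to $|l|$. Then $l + E_v \le Z$ (since $l_v = 0$ while $Z_v \ge 1$), and the quadratic identity
\[
\chi(l + E_v) - \chi(l) \;=\; 1 - (l, E_v) \;\le\; 0
\]
holds because adjacency forces $(l, E_v) \ge 1$. Iterating produces some $l'$ with $|Z| \le l' \le Z$ and $\chi(l') \le \chi(l)$, which gives the non-trivial inequality; the reverse is tautological. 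So $\min_{0<l\le Z}\chi(l) = \min_{|Z|\le l\le Z}\chi(l)$.

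For the first equality I would use the canonical short exact sequence
\[
0 \;\to\; \calO_{Z-|Z|}(-|Z|) \;\to\; \calO_Z \;\to\; \calO_{|Z|} \;\to\; 0,
\]
where the kernel is locally $(y)/(y^n)\subset \C\{x,y\}/(y^n)$ and coincides globally with the restricted natural line bundle on $Z-|Z|$ of Chern class $-|Z|$. Since $|Z|$ is a connected tree of rational curves one has $h^0(\calO_{|Z|}) = 1$, $h^1(\calO_{|Z|}) = 0$; moreover the constant section $1 \in H^0(\calO_Z)$ surjects onto $H^0(\calO_{|Z|})$. The long exact sequence then collapses to
\[
h^1(\calO_Z) \;=\; h^1(\calO_{Z - |Z|}(-|Z|)).
\]

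Finally, the Chern class $l' = -|Z|$ has $l'_v = -1 < 0$ for every $v \in \calv(|Z-|Z||) \subseteq \calv(|Z|)$, so Theorem \ref{th:CLB1}(II) together with formula \eqref{eq:cohNat} delivers
\[
h^1(\calO_{Z-|Z|}(-|Z|)) \;=\; \chi(|Z|) \,-\, \min_{0 \le l \le Z - |Z|,\ l\in L} \chi(|Z| + l).
\]
Because $|Z|$ is a reduced connected tree of $\P^1$'s, Riemann--Roch gives $\chi(|Z|) = \chi(\calO_{|Z|}) = 1$; substituting $l'' = |Z| + l$ converts the minimum into $\min_{|Z| \le l'' \le Z,\ l''\in L} \chi(l'')$, and combining with the previous step and the second equality yields \eqref{eq:h1Z}. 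The degenerate case $Z = |Z|$ (where $Z-|Z|=0$) is checked directly: both sides equal $0$. The only real obstacle is bookkeeping, namely verifying that the kernel of $\calO_Z \twoheadrightarrow \calO_{|Z|}$ is precisely the restricted natural line bundle $\calO_{Z-|Z|}(-|Z|)$ (a local computation) and that the negativity hypothesis of Theorem \ref{th:CLB1}(II) is satisfied; the generic analytic structure of $\tX$ enters only through this final invocation.
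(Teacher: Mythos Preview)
Your proof is correct and follows essentially the same approach as the paper: both use the short exact sequence $0\to\calO_{Z-D}(-D)\to\calO_Z\to\calO_D\to 0$ together with the surjectivity of $H^0(\calO_Z)\to H^0(\calO_D)$ and $h^1(\calO_D)=0$ to reduce to $h^1(\calO_{Z-D}(-D))$, which is then computed via (\ref{eq:cohNat}). The only minor difference is that the paper obtains the first equality by taking $D=E_v$ (and varying $v$, noting $\{0<l\le Z\}=\cup_v\{E_v\le l\le Z\}$), invoking Example~\ref{ex:cohNat} since $-E_v$ fails the strict negativity on all of $|Z-E_v|$; you instead take $D=|Z|$ throughout (where Theorem~\ref{th:CLB1}(II) applies directly) and supply a separate combinatorial argument for $\min_{0<l\le Z}\chi(l)=\min_{|Z|\le l\le Z}\chi(l)$.
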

\begin{proof} For $D=|Z|$ or $D=E_v$  for any $E_v\subset |Z|$ one has
\begin{equation}
0 \to H^0(Z-D,\calO_Z(-D)) \to H^0(\calO_Z) \stackrel{\delta}{\to} H^0(\calO_D)
\to  H^1(Z-D, \calO_Z(-D)) \stackrel{\iota}{ \to} H^1(\calO_Z) \to 0.
\end{equation}
Since $\delta$ is onto $\iota$ is an isomorphism.
 But for $ h^1(Z-D, \calO_Z(-D))$ Example \ref{ex:cohNat} and (\ref{eq:cohNat}) hold.
\end{proof}

\subsection{The cohomology of natural line bundles over $\tX$.}\label{ss:CoHNatX}
Next we apply the
 results of the previous subsection for a cycle $Z$ with all its coefficients very large.
 Recall that by Artin's Criterion $p_g=0$ (that is, $(X,o)$ is rational) if and only if
 $\min _{l\in L_{>0}}\{\chi(l)\}=1$ \cite{Artin62,Artin66}. Furthermore, for any singularity
  $\min _{l\in L_{\geq 0}}\{\chi(l)\}=\min _{l\in L}\{\chi(l)\}$, see e.g.
   \cite[Prop. 4.3.3]{trieste}.

 \begin{corollary}\label{cor:pg}
\begin{equation}\label{eq:pg}
p_g(X,o)= 1-\min_{l\in L_{>0}}\{\chi(l)\} =-\min_{l\in L}\{\chi(l)\}+\begin{cases}
1 & \mbox{if $(X,o)$ is not rational}, \\
0 & \mbox{else}.
\end{cases}
\end{equation}
\end{corollary}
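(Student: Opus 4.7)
The plan is to derive the corollary from Corollary \ref{cor:h1Z} by passing to a sufficiently large effective cycle $Z$ supported on all of $E$. First I would choose $Z=\sum_v r_vE_v$ with every $r_v$ so large that $h^1(\tX,\calO_{\tX})=h^1(Z,\calO_Z)$; this is the standard stabilization coming from the formal function theorem, already recalled in Remark \ref{rem:generic}(c). Since $\tX$ is generic and $|Z|=E$ is connected, Corollary \ref{cor:h1Z} applies and gives
\begin{equation*}
p_g(X,o)=h^1(Z,\calO_Z)=1-\min_{0<l\leq Z,\ l\in L}\chi(l).
\end{equation*}

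Second, I would replace the constrained minimum by the unconstrained one $\min_{l\in L_{>0}}\chi(l)$. The function $\chi$ is a quadratic form on $L\otimes\Q$ whose leading part $-(\cdot,\cdot)/2$ is positive definite, so $\chi$ is proper and bounded below on $L$. In particular the set $\{l\in L_{>0}:\chi(l)\leq 1\}$ is finite, and by enlarging each $r_v$ further (while still keeping the stabilization of $h^1$) we can guarantee that every element of this finite set satisfies $l\leq Z$. Since in any case the constrained minimum is $\leq \chi(E_v)\leq 1$ for some $E_v$, this yields
\begin{equation*}
\min_{0<l\leq Z,\ l\in L}\chi(l)=\min_{l\in L_{>0}}\chi(l),
\end{equation*}
establishing the first equality of the corollary.

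For the second equality I would split on whether $(X,o)$ is rational. If it is, then Artin's criterion gives $\min_{l\in L_{>0}}\chi(l)=1$, so $p_g=0$; combining $\chi(0)=0$ with the fact (cited in the text, see \cite[Prop.~4.3.3]{trieste}) that $\min_{l\in L}\chi(l)=\min_{l\in L_{\geq 0}}\chi(l)$ forces $\min_{l\in L}\chi(l)=0$, in agreement with the right-hand side. If $(X,o)$ is not rational then $\min_{l\in L_{>0}}\chi(l)\leq 0$, hence adjoining the point $l=0$ with $\chi(0)=0$ does not change the value:
\begin{equation*}
\min_{l\in L_{>0}}\chi(l)=\min_{l\in L_{\geq 0}}\chi(l)=\min_{l\in L}\chi(l),
\end{equation*}
and rearranging gives $1-\min_{l\in L_{>0}}\chi(l)=-\min_{l\in L}\chi(l)+1$.

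There is no serious obstacle here: the real content has been absorbed into Corollary \ref{cor:h1Z} (and hence ultimately into Theorem \ref{th:CLB1}). The only mild point is the passage from the cycle-constrained minimum to the global lattice minimum, which is handled by the properness of the quadratic form $\chi$ so that the minimum is attained on a bounded region that can be trapped beneath $Z$ once its coefficients are large enough.
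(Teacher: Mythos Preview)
Your proof is correct and follows essentially the same approach as the paper: apply Corollary~\ref{cor:h1Z} to a cycle $Z$ with all coefficients large (so that $h^1(\calO_Z)=p_g$ and the constrained minimum agrees with $\min_{l\in L_{>0}}\chi(l)$), then invoke Artin's rationality criterion for the second equality. The paper's own proof is the terse one-line version of exactly this argument.
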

\begin{proof}For the first identity use (\ref{eq:h1Z}), for the second one
 use Artin's Criterion for rationality.
\end{proof}

\begin{remark}\label{rem:Wa} (a)
For {\it any } non--rational  analytic structure $(X,o)$ one has $p_g(X,o)\geq 1-\min_{l\in L}\{\chi(l)\}$ \cite{Wa70,NO17}.
The above corollary shows that this topological bound in fact is optimal.

(b) If $(X,o)$ is elliptic then $\min _{l\in L_{>0}}\{\chi(l)\}=0$.
Hence,  if the analytic structure is generic then $p_g=
1-\min _{l\in L_{>0}}\{\chi(l)\}=1$. This was proved by Laufer in \cite{Laufer77}.
\end{remark}
\begin{corollary}\label{cor:CohNat} Assume that $\tX$  is generic with
dual graph $\Gamma$. Choose
 any  $l'\in L'$ and consider  $\calO_{\tX}(l')$,  the natural line bundle on $\tX$.
 Then
\begin{equation}\label{eq:CohNatCor}
h^1(\tX,\calO_{\tX}(l'))=\chi(-l')-\min _{l\in L_{\geq 0}}\, \{\chi(-l'+l)\}+\epsilon(l'),
\end{equation}
where
\begin{equation*}
\epsilon(l')=\begin{cases}
1 & \mbox{if \ $l'\in L$, $l'\geq 0$, and $(X,o)$ is not rational}, \\
0 & \mbox{else}.
\end{cases}
\end{equation*}
\end{corollary}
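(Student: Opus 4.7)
The plan is to reduce $h^1(\tX,\calO_{\tX}(l'))$ to the cohomology of the restricted natural line bundle on a sufficiently large integral cycle supported on $E$, and then extract the topological formula from Theorem \ref{th:CohNat} either directly or through an auxiliary short exact sequence. First I would fix $Z=\sum_v N_vE_v$ with all $N_v\gg 0$ so that (a) $h^1(\tX,\calO_{\tX}(l'))=h^1(Z,\calO_Z(l'))$ (formal function theorem for $H^1$ of coherent sheaves on the Stein neighbourhood of $E$), and (b) the constrained minimum $\min_{0\leq l\leq Z,\,l\in L}\chi(-l'+l)$ coincides with $\min_{l\in L_{\geq 0}}\chi(-l'+l)$, which is automatic from the quadratic, negative-definite nature of $\chi$ on $L$.

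If the Laufer computation sequence of \ref{bek:Lauferseq} starting from $(l',Z)$ terminates in a pair $(l'_t,Z_t)$ satisfying $l'_{t,v}<0$ for every $v\in\calv(|Z_t|)$, then Theorem \ref{th:CohNat} applied to $(l',Z)$ delivers the identity immediately with $\epsilon(l')=0$; this disposes of all classes covered by Example \ref{ex:cohNat}. Otherwise --- the key instances being $l'\in L_{\geq 0}$ (where the endpoint is forced to be $l'_t=0$) and certain fractional classes with some positive coordinates, e.g.\ $l'=-E_u^*$ --- I would choose an auxiliary $D\in L_{\geq 0}$ such that $l'-D$ has strictly negative coordinates on $\calv$ (for instance $D=l'+\sum_v E_v$ when $l'\in L_{\geq 0}$), and consider the short exact sequence
\begin{equation*}
0\to\calO_{\tX}(l'-D)\to\calO_{\tX}(l')\to\calO_D(l')\to 0.
\end{equation*}
The left term now satisfies the hypothesis of Theorem \ref{th:CohNat}; the right term $h^i(D,\calO_D(l'))$ is computable by a further Laufer reduction on the cycle $D$, or equivalently via cycle Serre duality which replaces the Chern class $l'$ by $D+Z_K-l'$, bringing it within the scope of the previous case. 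The remaining sub-case $l'=0$ is precisely Corollary \ref{cor:pg}.

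Combining these inputs through the long exact sequence in cohomology and simplifying via the Riemann--Roch identity $\chi(Z,\calO_Z(l'))=\chi(Z-l')-\chi(-l')$ together with the formula for $p_g$ from Corollary \ref{cor:pg} should yield the identity with the correct correction $\epsilon(l')$. The main obstacle lies in verifying that this correction emerges with the right value. In the non-rational case with $l'\in L_{\geq 0}$ the connecting homomorphism in the long exact sequence has a corank one larger than what the generic-line-bundle answer of Remark \ref{rem:cohGen}(a) would predict: this reflects the canonical section of $\calO_{\tX}(l')$ with divisor $l'$, which is present for the natural line bundle but absent for a generic $\calL\in\pic^{R(l')}(Z)$ --- the very gap flagged in Remark \ref{rem:cohGen}(b) --- and ultimately traces back to Artin's criterion $\min_{l>0}\chi(l)\leq 0$ that distinguishes the non-rational regime. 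In the rational case $p_g=0$ collapses the relevant $H^1$-term and forces $\epsilon(l')=0$, so no correction appears.
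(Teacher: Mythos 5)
Your first half is sound and is in fact the paper's route: reduce to a cycle $Z$ with all coefficients huge, note that the constrained and unconstrained minima of $\chi(-l'+\cdot)$ agree, and invoke Theorem \ref{th:CohNat} when the Laufer sequence terminates with strictly negative coefficients. Note, though, that with $Z\gg 0$ the endpoint satisfies $-l'_t\in \calS'$, so it is either $0$ (exactly when $l'\in L_{\geq 0}$) or has \emph{all} coefficients strictly negative (Remark \ref{rem:cohGen}(c)); there is no residual family of ``fractional classes with some positive coordinates'' needing separate treatment, and your example $l'=-E_u^*$ already has every coefficient strictly negative, hence lies in the good case.

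The genuine gap is in the remaining case $l'\in L_{\geq 0}$, which is precisely where $\epsilon(l')$ arises. Your exact sequence $0\to\calO(l'-D)\to\calO(l')\to\calO_D(l')\to 0$ (which must anyway be used on a cycle $Z\gg0$, since $H^0(\tX,\cdot)$ is infinite dimensional) only yields $h^1(\calO_Z(l'))=h^1(\calO_{Z-D}(l'-D))+h^1(\calO_D(l'))-\rank\delta$, and the rank of the connecting map $\delta$ is exactly the unknown quantity: expressing it through $h^0$'s and Riemann--Roch is circular, and ``the corank is one larger because of the canonical section with divisor $l'$'' is a heuristic, not an argument (a priori the natural bundle could fail to behave generically by more than $1$). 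The Serre-duality fallback does not rescue this: dualizing on $D$ replaces $\calO_D(l')$ by a twist involving $K_{\tX}$, which is \emph{not} a restricted natural line bundle unless $(X,o)$ is $\Q$--Gorenstein, and by Proposition \ref{prop:QGor} the generic structure is not $\Q$--Gorenstein outside the rational and minimally elliptic cases, so Theorems \ref{th:CLB1}/\ref{th:CohNat} do not apply to that dual bundle. The paper avoids any connecting-map analysis: it shows the defect $\Delta(Z_k,l'_k):=h^1(Z_k,\calO_{Z_k}(l'_k))-\chi(-l'_k)+\min_{0\leq l\leq Z_k}\chi(-l'_k+l)$ is constant along the Laufer sequence, because each step's quotient is $\calO_{E_w}(l'_k)$ of negative degree on a rational curve (so its $H^0$ vanishes and $h^1$ drops by exactly $-1-(l'_k,E_w)$), while the two minima agree since $\chi(-l'_k+E_w+x)\leq\chi(-l'_k+x)$ for $x\geq 0$ with $E_w\notin|x|$; at the endpoint either all coefficients are strictly negative (defect $0$ by Theorem \ref{th:CLB1}(II)) or $(l'_t,Z_t)=(0,Z_t)$, where Corollary \ref{cor:pg} gives the defect $\epsilon(l')$. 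To repair your write-up you would either need to prove the asserted corank statement for $\delta$ directly, or simply replace the auxiliary-divisor step by this $\Delta$-invariance argument.
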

 \begin{proof}
For any effective cycle $Z$ (with $|Z|=E$) and $l'\in L'$ let us write
$\Delta(Z,l'):= h^1(Z,\calO_Z(l'))-\chi(-l')+
\min _{0\leq l\leq Z, l\in L}\, \{\chi(-l'+ l)\}$.
 In order to compute $h^1(\tX,\calO_{\tX}(l'))$ let us
 fix some $Z$ with all its coefficients very large.
Then, if we start with the pair $(l',Z)$,
   the Laufer sequence from \ref{bek:Lauferseq} ends with some
 $(l'_t,Z_t)$ with $Z_t\geq E$ (still with large coefficients), and $-l'_t\in \calS'$.
  We claim that $\Delta(Z_k,l'_k)$ is constant along the computation sequence. Indeed, from the
 cohomological exact sequence used in \ref{bek:Lauferseq} (for $k=0$)
$h^1(Z,\calO(l'))=h^1(Z-E_w,\calO(l'-E_w))-1-(E_w,l')$. Then, we compare
$\min_{0\leq l\leq Z}\chi(-l'+l) $ and $\min_{0\leq l\leq Z-E_w}\chi(-l'+E_w+l) $.
Since for any $x\geq 0$ with $E_w\not\in|x|$ we have
$\chi(-l'+E_w+x)\leq \chi(-l'+x)$, these two minima agree. Hence the claim  follows.

Now, for the pair $(l'_t,Z_t)$, with $-l'_t\in \calS'$,  we distinguish two cases.
The case $l'_t=0$ occurs exactly when $l'\in L_{\geq 0}$
(because $l'_t$ is the largest element of $(-\calS')\cap (l'-L_{\geq 0})$, cf.
\cite[Prop. 4.3.3]{trieste}). In this case $\Delta(Z_t,l'_t)$
can be computed from (\ref{eq:pg}). Or,
$l'_t\not=0$. In this case all the coefficients of $l'_t$ are strict negative
 (use e.g. Remark \ref{rem:cohGen}(c)), and  $\Delta(Z_t,l'_t)=0$ by (\ref{eq:cohNat}).
 \end{proof}

\begin{example}\label{ex:CohNat} For any $h\in H$ define $k_h:=K+2r_h$ and
$$\chi_{k_h}(x):=
 -(x,x+k_h)/2=\chi(x)-(x,r_h)=\chi(x+r_h)-\chi(r_h).$$
 (For the definition of $r_h$ see \ref{ss:2.1}.)
  It is known (use e.g. the algorithm from \cite[Prop. 4.3.3]{trieste}) that
 for any $h\in H$ one has $\min _{l\in L_{\geq 0}}\, \chi(r_h+l)=
 \min _{l\in L}\, \chi(r_h+l)$. Therefore, for $h\not=0$ one has
\begin{equation}\label{eq:rh}
h^1(\tX, \calO_{\tX}(-r_h))=\chi(r_h)-\min _{l\in L}\, \chi(r_h+l)=- \min _{ l\in L}
\{\chi_{k_h}(l)\}=- \min _{ l\in L_{\geq 0}}
\{\chi_{k_h}(l)\}.
\end{equation}
\end{example}
\begin{remark}\label{rem:pgUAC} (a)
Let $(X_{ab},o)$ be the {\bf universal abelian covering} of $(X,o)$.
Then
$$p_g(X_{ab},0)=\sum _{h\in H}h^1(\tX, \calO_{\tX}(-r_h)),$$
see e.g. \cite{trieste}.
Hence $p_g(X_{ab},0)$ is topologically (and explicitly)  computable by (\ref{eq:pg}) and (\ref{eq:rh}).

(b) For a conjectural identity which connects
$\min _{l\in L}\, \chi(r_h+l)$ with the
Heegaard Floer
$d$--invariant associated with the link of the singularity and the $spin^c$--structure
attached to the characteristic element $k_h$ see \cite[\S 5.2]{lattice}.
\end{remark}

\subsection{The cohomological cycle of $\tX$}\label{ss:cohcycle}
For any non--rational germ and fixed resolution the set
$ \{Z\in L_{>0}\,:\, h^1(\calO_Z)=p_g(X,o)\}$ has a unique minimal element $Z_{coh}$,
called the cohomological cycle. It also satisfies the next  property: $h^1(\calO_Z)<p_g$
for any $Z\not\geq Z_{coh}$, $Z>0$ (see e.g. \cite[4.8]{MR}).

In parallel, let us mention the following topological statement. For any
fixed non--rational resolution graph,
 $\calm :=\{ Z\in L_{>0}: \chi(Z)=\min_{l\in L} \chi(l)\}$ has a unique minimal and a unique maximal element.
Indeed, if $l_1,l_2\in \calm$, then for $m:=\min \{l_1,l_2\}$ and
$M:=\max \{l_1,l_2\}$ one has
$\chi(M)+\chi(m)= \chi(l_1)+\chi(l_2)-(l_1-m,l_2-m)\leq 2\min\chi$, hence $\chi(m)=\chi(M)=
\min \chi$. Hence, $M\in \calm$ always, and $m\in \calm$ whenever  $m\not=0$. However,
if $m=0$ then the germ is elliptic and $\calm$ admits a minimal element, namely the minimally elliptic cycle \cite{Laufer77,weakly,Nfive}.

\begin{corollary}\label{cor:cohcycle}
Assume that $\tX$ is generic with a non--rational dual graph $\Gamma$. Then the
cohomological cycle   $Z_{coh}:=\min\{Z\in L_{>0}\,:\, h^1(\calO_Z)=p_g(X,o)\}$, is
$\min \{ Z\in L_{>0}\,:\, \chi(Z)=\min_{l\in L} \chi(l)\}$.
\end{corollary}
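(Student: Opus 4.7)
Set $\calm':=\{Z\in L_{>0}:h^1(\calO_Z)=p_g\}$, so that $Z_{coh}=\min\calm'$. The uniqueness of this minimum is the classical Mayer--Vietoris argument applied to the sequence $0\to\calO_{Z\vee Z'}\to\calO_Z\oplus\calO_{Z'}\to\calO_{Z\wedge Z'}\to 0$, which together with the bound $h^1\leq p_g$ forces $\calm'$ to be closed under $\wedge$ whenever the supports meet. My plan is to prove $Z_{coh}=M:=\min\calm$ by showing both $M\in\calm'$ (giving $Z_{coh}\leq M$) and $Z_{coh}\in\calm$ (giving $M\leq Z_{coh}$).

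The first inclusion rests on the fact that every $Z\in\calm$ has connected support. If $Z=\sum_{j=1}^n Z^{(j)}$ is its decomposition into $n\geq 2$ disjoint-support connected components, then $h^1(\calO_Z)=\sum_j h^1(\calO_{Z^{(j)}})$ by disjointness, while Corollary~\ref{cor:h1Z} yields $h^1(\calO_{Z^{(j)}})=1-a^{(j)}\geq 1-\chi(Z^{(j)})$ for $a^{(j)}:=\min_{0<l\leq Z^{(j)}}\chi(l)$. Summing and inserting $\chi(Z)=\min\chi=1-p_g$ gives $h^1(\calO_Z)\geq n+p_g-1$, contradicting $h^1(\calO_Z)\leq h^1(\calO_{\tX})=p_g$. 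Hence $|M|$ is connected, and Corollary~\ref{cor:h1Z} produces $h^1(\calO_M)=1-\chi(M)=p_g$, so $M\in\calm'$ and $Z_{coh}\leq M$.

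For the reverse inequality I would show $|Z_{coh}|$ is connected; once this is known, Corollary~\ref{cor:h1Z} gives $p_g=1-\min_{0<l\leq Z_{coh}}\chi(l)$, so $\min\chi$ is attained by some $l\leq Z_{coh}$, i.e., $l\in\calm$, whence $M\leq l\leq Z_{coh}$. Suppose for contradiction $Z_{coh}=Z_1+Z_2$ with $|Z_1|\cap|Z_2|=\emptyset$. Minimality of $Z_{coh}$ in $\calm'$ forces each $Z_i$ to be minimal for its own value $h^1(\calO_{Z_i})=1-a_i$ (otherwise replacing $Z_i$ by a strictly smaller connected cycle $Z'_i$ containing a $\chi$-minimizer would produce $Z'_i+Z_{3-i}<Z_{coh}$ in $\calm'$); a subcycle argument based on Corollary~\ref{cor:h1Z} then forces $\chi(Z_i)=a_i$. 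Summing yields $\chi(Z_{coh})=a_1+a_2=1+\min\chi>\min\chi$, so $Z_{coh}\notin\calm$. Combining $Z_{coh}\leq M$ with the connectedness of $|M|$, one identifies a vertex $E_v\subset|M|\setminus|Z_{coh}|$ adjacent to, say, $|Z_1|$, and uses Mayer--Vietoris applied to $(Z_{coh}+E_v,M)$ together with uniqueness of the minimum of $\calm'$ to extract a cycle in $\calm'$ strictly below $Z_{coh}$, the desired contradiction.

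I expect the principal obstacle to be this final contradiction, i.e., ruling out a disconnected $|Z_{coh}|$. The naive monotonicity $Z\leq Z'\Rightarrow h^1(\calO_Z)\leq h^1(\calO_{Z'})$ is insufficient, since enlarging $Z_{coh}$ along a bridge in $|M|$ moves upward, not downward. One seems to need a more delicate use of Mayer--Vietoris plus the uniqueness of $\min\calm'$, combined perhaps with the explicit formula of Theorem~\ref{th:CohNat} applied to the restricted natural line bundle $\calO(-Z_{coh})|_{M-Z_{coh}}$ and the induced isomorphism $H^1(\calO_M)\xrightarrow{\sim} H^1(\calO_{Z_{coh}})$ (both of dimension $p_g$), to conclude $\chi(Z_{coh})\leq\chi(M)=\min\chi$ directly.
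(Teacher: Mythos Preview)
Your argument for $Z_{coh}\leq M$ is correct and clean: the inequality $h^1(\calO_Z)\geq n+p_g-1$ for $Z\in\calm$ with $n$ components, compared against $h^1(\calO_Z)\leq p_g$, forces $n=1$; then Corollary~\ref{cor:h1Z} applied to the connected $M$ gives $h^1(\calO_M)=p_g$, so $M\in\calm'$.

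The reverse inequality is where the content lies, and your diagnosis is accurate: everything reduces to the connectedness of $|Z_{coh}|$, after which Corollary~\ref{cor:h1Z} produces an $l\in\calm$ with $l\leq Z_{coh}$ and hence $M\leq Z_{coh}$. The paper gives no proof of the corollary, so there is nothing to compare against; presumably the authors regard it as immediate from Corollaries~\ref{cor:h1Z} and~\ref{cor:pg}, tacitly treating only connected supports. Your proposed closures do not work as written. In particular, applying Example~\ref{ex:cohNat} to $\calO(-Z_{coh})|_{M-Z_{coh}}$ is legitimate (the adjacency hypothesis holds since $|M|$ is connected and any component of $|M-Z_{coh}|$ not meeting $|Z_{coh}|$ must border a vertex $u\in|M|\setminus|M-Z_{coh}|$, where $M_u=(Z_{coh})_u>0$), but it returns $h^1=\chi(Z_{coh})-\min\chi$, which is exactly what the exact sequence $0\to\calO_{M-Z_{coh}}(-Z_{coh})\to\calO_M\to\calO_{Z_{coh}}\to 0$ together with $h^0(\calO_M)=1$ already gives---so no new constraint is obtained and $\chi(Z_{coh})=\min\chi$ does not follow.

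A more structured attempt is to observe that the proof of Corollary~\ref{cor:h1Z} with $D=|Z|$ goes through for disconnected $|Z|$ (the map $H^0(\calO_Z)\to H^0(\calO_{|Z|})$ is onto via locally constant functions), yielding $h^1(\calO_Z)=\chi(|Z|)-\min_{|Z|\leq l\leq Z}\chi(l)$ with $\chi(|Z|)=n$ on a tree of rational curves. For $Z_{coh}$ this gives a minimizer $l_0\leq Z_{coh}$ with $\chi(l_0)=(n-1)+\min\chi$; when $n=1$ this finishes. When $n\geq2$ the purely numerical bound (each $a^{(j)}\geq\min\chi+1$, so $(n-1)(p_g-1)\geq1$) is compatible with $p_g\geq2$, and one must exploit the inclusion $|Z_{coh}|\subset|M|$ and the minimality of $M$ in $\calm$ more delicately. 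This is a genuine gap that neither your sketch nor the paper resolves explicitly.
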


\subsection{The cohomological cycle of a line bundle}\label{ss:cohcycle2}
For any $\calL\in \pic(\widetilde{X})$
with $h^1(\widetilde{X},\calL)>0$ the set
$L_{\calL}:=\{l\in L_{>0}\, : \, h^1(l,\calL)=h^1(\widetilde{X},\calL)\}$
has a unique minimal element, denoted by $Z_{coh}(\calL)$, called
the cohomological cycle of $\calL$ (and of $\phi$). Similarly,
for any  $Z>0$ and $\calL\in \pic(Z)$
with $h^1(Z,\calL)>0$ the set
$L_{Z,\calL}:=\{l\in L, \ 0<l\leq Z\, : \, h^1(l,\calL)=h^1(Z,\calL)\}$
has a unique minimal element, denoted by $Z_{coh}(Z,\calL)$, called
the cohomological cycle of $(Z,\calL)$. (For detail see e.g. \cite[5.5]{NNI}.)

\begin{corollary}\label{cor:cohcyc2} Assume that $\tX$ is generic.

(a) Fix any $l'\in L'$ with $h^1(\tX, \calO_{\tX}(l'))\not=0$. Then the set
$$L_{l'}:=\{l_{min}\in L_{\geq 0}\ |\ \chi(-l'+l_{min})=
 \min_{l\in L_{\geq 0}}\chi( -l'+l)\}$$
 has a unique minimal element $Z_{coh}(l')$, which coincides with the
 cohomological cycle of  $\calO_{\tX}(l')$.

 (b) For any $Z>0$ and $l'\in L'$  with $h^1(Z, \calO_{\tX}(l'))\not=0$ the set
$$L_{Z,l'}:=\{l_{min}\in L, \ 0\leq l_{min}\leq Z, \ |\ \chi(-l'+l_{min})=
 \min_{0\leq l\leq Z,\, l\in L}\chi( -l'+l)\}.$$
has a unique minimal element $Z_{coh}(Z,l')$, which coincides with the
 cohomological cycle of $\calO_{\tX}(l')|_Z$.
\end{corollary}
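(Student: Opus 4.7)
The plan is to first establish a purely combinatorial fact—uniqueness of a minimal element of the level sets $L_{l'}$ and $L_{Z,l'}$—and then to identify that element with the cohomological cycle by invoking the $\chi$-formula for $h^1$ of restricted natural line bundles on generic $\tX$ provided by Theorem \ref{th:CLB1} and Theorem \ref{th:CohNat}.

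First I would show that $L_{l'}$ and $L_{Z,l'}$ are closed under the lattice operations of pointwise minimum and maximum on $L$. Given $l_1, l_2 \in L_{l'}$, set $m := \min\{l_1, l_2\}$, $M := \max\{l_1, l_2\}$, $d := l_1 - m$, $e := l_2 - m$; then $d, e$ are effective cycles with disjoint supports and $M = m + d + e$. A direct expansion of $\chi$, in which the $Z_K$-linear contribution cancels, yields
\begin{equation*}
\chi(-l'+l_1) + \chi(-l'+l_2) - \chi(-l'+m) - \chi(-l'+M) = (d, e).
\end{equation*}
Since $\Gamma$ is a tree and $d, e$ have disjoint supports, $(d,e)\geq 0$, hence $\chi(-l'+m) + \chi(-l'+M) \leq 2\min$, and as each summand is at least $\min$, both equal the minimum. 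Thus $m, M \in L_{l'}$, and analogously in $L_{Z,l'}$ since also $m, M \in [0, Z]$. Fixing any $l_0 \in L_{l'}$, the set $L_{l'} \cap [0, l_0]$ is finite and closed under pointwise minimum, so admits a unique minimum which, by the closure property, is then the global minimum $Z_{coh}(l')$ of $L_{l'}$; the same reasoning produces $Z_{coh}(Z, l')$. (This closure argument is parallel to the one sketched in \ref{ss:cohcycle} for $\calm$.)

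Next, I would identify this minimum with the cohomological cycle via the formula $h^1(Z', \calO_{Z'}(l')) = \chi(-l') - \min_{0 \leq l \leq Z',\, l \in L} \chi(-l'+l)$, available on generic $\tX$ through Theorem \ref{th:CLB1}(II) together with the Laufer reduction of Theorem \ref{th:CohNat}. For part (b), set $Z^* := \min L_{Z,l'}$: then the minimum in the formula at $Z' = Z^*$ is realized by $l = Z^*$ itself and equals the global minimum on $[0,Z]$, so $h^1(Z^*, \calO_{Z^*}(l')) = h^1(Z, \calO_Z(l'))$, placing $Z^*$ in the defining set of the cohomological cycle. Conversely, if $0 < Z' \leq Z$ satisfies the same equality, the formula forces a minimizer $l^* \leq Z'$ to lie in $L_{Z,l'}$, whence $\min\{l^*, Z^*\} \in L_{Z,l'}$ by the closure property; this contradicts the minimality of $Z^*$ unless $l^* \geq Z^*$, i.e., $Z' \geq Z^*$. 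Hence $Z^* = Z_{coh}(Z, \calO_{\tX}(l')|_Z)$. Part (a) follows from (b) by taking $Z$ large enough that $h^1(Z, \calO_Z(l')) = h^1(\tX, \calO_{\tX}(l'))$ and $Z \geq \min L_{l'}$: then $L_{Z,l'} = L_{l'} \cap [0,Z]$, and the two minima coincide.

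The principal technical obstacle concerns the degenerate case in which the global minimum of $\chi(-l' + \cdot)$ on $L_{\geq 0}$ is attained at $l = 0$, which is precisely the situation where the $\epsilon(l')$-correction of Corollary \ref{cor:CohNat} contributes to $h^1(\tX, \calO_{\tX}(l'))$ (namely $l' \in L_{\geq 0}$ and $(X,o)$ non-rational). In this case the naive minimum of $L_{l'}$ is $0$, whereas the cohomological cycle by definition lies in $L_{>0}$; one must therefore restrict the minimization to $L_{>0}$ and verify the closure/uniqueness argument there separately, paralleling the role played by the minimally elliptic cycle in Corollary \ref{cor:cohcycle}. Reconciling the $\epsilon$-jump between the $Z'$-level and $\tX$-level formulas is where most of the bookkeeping lies.
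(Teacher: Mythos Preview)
The paper states this corollary without proof, so your approach---closure of $L_{l'}$ under pointwise $\min$ via the identity $\chi(-l'+l_1)+\chi(-l'+l_2)=\chi(-l'+m)+\chi(-l'+M)+(d,e)$, followed by identification with the cohomological cycle through the $\chi$--formula for $h^1$---is the natural one and parallels the argument for $\calm$ in \ref{ss:cohcycle}. The combinatorial half is correct.

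There is, however, a genuine gap in the identification step. You invoke the identity $h^1(Z',\calO_{Z'}(l'))=\chi(-l')-\min_{0\leq l\leq Z'}\chi(-l'+l)$ for \emph{every} intermediate cycle $0<Z'\leq Z$, citing Theorem \ref{th:CLB1}(II) and Theorem \ref{th:CohNat}. But both theorems carry hypotheses: Theorem \ref{th:CLB1}(II) requires $l'_v<0$ for all $v\in\calv(|Z'|)$, and Theorem \ref{th:CohNat} requires the terminal pair $(l'_t,Z'_t)$ of the Laufer sequence started from $(l',Z')$ to satisfy the same negativity. These conditions depend on $Z'$, not just on $Z$, and for an arbitrary $l'\in L'$ they can fail at some sublevel $Z'$ even if they hold at $Z$. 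Your argument needs either a reduction showing that the relevant $Z'$ (namely $Z^*=\min L_{Z,l'}$ and any $Z'$ in the cohomological set) automatically satisfy the hypotheses, or an appeal to the statement in Remark \ref{rem:cohcyc} (proved in \cite[5.5]{NNI}) that the cohomological cycle of $\calL_{gen}$ is $Z_{coh}(l')$, combined with $h^1(Z',\calO_{Z'}(l'))=h^1(Z',\calL_{gen})$ at each level---which again returns you to the same hypothesis check.

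On the $\epsilon$--case: you correctly flag that when $l'\in L_{\geq 0}$ and $(X,o)$ is non-rational, the set $L_{l'}$ as written in the corollary may contain $0$ (e.g.\ in the elliptic case with $l'=0$, where $\chi(0)=0=\min_{l\geq 0}\chi(l)$), while the cohomological cycle lies in $L_{>0}$ by definition. Your suggestion to restrict to $L_{>0}$ is the right fix, but then the closure-under-$\min$ argument needs the additional observation (as in \ref{ss:cohcycle}) that $m=\min\{l_1,l_2\}$ cannot vanish when $l_1,l_2>0$ minimize $\chi(-l'+\cdot)$ on $L_{>0}$, or else one passes to the minimally-elliptic-type cycle. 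This is more than bookkeeping: it reflects a genuine ambiguity in the corollary's statement that the paper itself does not address.
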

\begin{remark}\label{rem:cohcyc}\ \cite[5.5]{NNI}
For any analytic structure $(X,o)$ supported on the fixed topological type and for any
resolution $\phi$, fix $l'$ such that for the generic line bundle $\calL_{gen}\in\pic^{l'}(\tX)$
one has  $h^1(\tX,\calL_{gen})\not=0$. Then the cohomology cycle of $\calL_{gen}$ is
$Z_{coh}(l')$ (independently of the analytic structure). Similarly, if $h^1(Z,\calL_{gen})\not=0$
for the generic $\calL_{gen}\in\pic^{l'}(Z)$ then the cohomological cycle of the pair $(Z,\calL_{gen})$
is $Z_{coh}(Z,l')$.

\end{remark}

\subsection{The Hilbert series}\label{ss:Hilbert}
Fix $\tX$ generic  and let $H(\bt)$ be the multivariable
(equivariant) Hilbert
series associated with the divisorial filtration of the local algebra of the universal abelian covering
of $(X,o)$ associated with divisors supported on all irreducible exceptional   divisors of $\tX$;
for details see e.g. \cite{CDGPs,CDGEq,NCL}. Write $H(\bt)=\sum _{l'\in L'} \mathfrak{h}(l')\bt^{l'}$.
(Here if $l'=\sum_v l'_vE_v$ then $\bt^{l'}=\prod_v t_v^{l'_v}$.) It is known that for any $l'$ there exists a unique $s(l')\in \calS'$ such that $s(l')-l'\in L_{\geq 0}$, and $s(l')$ is minimal with these properties. Furthermore,  for any $l'\in L'$ one has $\mathfrak{h}(l')=\mathfrak{h}(s(l'))$.
Hence it is enough to determine $\mathfrak{h}(l')$ for the (closed) first quadrant
 (because $\calS'\subset L'_{\geq 0}$).

Write $l'$ as $r_h+l_0$ for some $l_0\in L_{\geq 0}$ (and $h=[l']$).
Recall that
$\mathfrak{h}(l')$ is the dimension of $H^0(\calO_{\tX}(-r_h))/H^0(\calO_{\tX}(-l_0-r_h))$, see e.g.
\cite[(2.3.3)]{NCL}. Therefore, for $l_0=0$ we get $\mathfrak{h}(r_h)=0$.

\begin{proposition}\label{prop:Hilbert} Assume that $l'=r_h+l_0$ with $l_0>0$. Then
for $h\not=0$
\begin{equation}\label{eq:Hilbert}
\mathfrak{h}(l')=
\min_{l\in L_{\geq 0}} \{\chi(l'+l)\}-\min_{l\in L_{\geq 0}} \{\chi(r_h+l)\}=
\min_{l\in L_{\geq 0}} \{\chi_{k_h}(l_0+l)\}-\min_{l\in L_{\geq 0}} \{\chi_{k_h}(l)\}.
\end{equation}
For $h=0$ (i.e. when $r_h=0$ and $l'=l_0>0$)
\begin{equation}\label{eq:Hilbert2}
\mathfrak{h}(l_0)=\min_{l\in L_{\geq 0}} \{\chi(l_0+l)\}-\min_{l\in L_{\geq 0}} \{\chi(l)\}+
\begin{cases}
1 & \mbox{if $(X,o)$ is not rational}, \\
0 & \mbox{else}.
\end{cases}
\end{equation}
\end{proposition}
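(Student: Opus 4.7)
The plan is to reduce $\mathfrak{h}(l')$ to two applications of Corollary~\ref{cor:CohNat} via a single short exact sequence on $\tX$ together with a Riemann--Roch computation on the cycle $l_0$. Starting from the identification of $\mathfrak{h}(l')$ with $\dim_{\C} H^0(\tX,\calO_{\tX}(-r_h))/H^0(\tX,\calO_{\tX}(-l'))$ recalled just before the statement, I would twist the standard sequence $0\to\calO_{\tX}(-l_0)\to\calO_{\tX}\to\calO_{l_0}\to 0$ by $\calO_{\tX}(-r_h)$ to obtain
\begin{equation*}
0\to\calO_{\tX}(-l')\to\calO_{\tX}(-r_h)\to\calO_{l_0}(-r_h)\to 0.
\end{equation*}
The associated long exact sequence in cohomology (which terminates at $H^1(\calO_{l_0}(-r_h))\to 0$ since higher cohomology vanishes on the resolution) gives the alternating-sum identity
\begin{equation*}
\mathfrak{h}(l')=\chi(\calO_{l_0}(-r_h))+h^1(\tX,\calO_{\tX}(-r_h))-h^1(\tX,\calO_{\tX}(-l')).
\end{equation*}

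Next, Riemann--Roch on the (possibly non-reduced) curve $l_0$, in the form $\chi(\calO_{Z}(D))=\chi(Z-D)-\chi(-D)$, applied with $Z=l_0$ and $D=-r_h$, yields $\chi(\calO_{l_0}(-r_h))=\chi(l')-\chi(r_h)$, using $l_0+r_h=l'$. Substituting the topological expressions of Corollary~\ref{cor:CohNat} for the two $h^1$ terms, the quadratic contributions $\chi(l')$ and $\chi(r_h)$ cancel, and one is left with a difference of lattice minima.

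The split into cases $h\neq 0$ and $h=0$ comes solely from the correction $\epsilon$ in Corollary~\ref{cor:CohNat}. For $h\neq 0$ both $-l'$ and $-r_h$ have nonzero class in $H$, hence lie outside $L$, so $\epsilon$ vanishes in both applications and one obtains directly (\ref{eq:Hilbert}); the reformulation in terms of $\chi_{k_h}$ follows at once from the identity $\chi_{k_h}(x)=\chi(x+r_h)-\chi(r_h)$ recalled in Example~\ref{ex:CohNat}, which merely shifts the minimizing variable by a constant. For $h=0$ we have $r_h=0$, so $\calO_{\tX}(-r_h)=\calO_{\tX}$ falls under the distinguished case $l'\in L_{\geq 0}$ of Corollary~\ref{cor:CohNat}, contributing an extra $\epsilon=1$ precisely when $(X,o)$ is not rational (detected combinatorially by Artin's criterion, cf. Corollary~\ref{cor:pg}); this is exactly the correction appearing in (\ref{eq:Hilbert2}).

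No substantial obstacle arises beyond this bookkeeping, since all the hard content has already been packed into Theorem~\ref{th:CLB1} and its corollaries; what one has to verify carefully is the Riemann--Roch sign convention on the non-reduced cycle $l_0$ and the precise condition under which the rationality correction enters the two applications of the Corollary.
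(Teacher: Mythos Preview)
Your proposal is correct and is exactly the argument the paper intends: the paper's own proof is the single sentence ``Use the exact sequence $0\to \calO(-r_h-l_0)\to \calO(-r_h)\to \calO_{l_0}(-r_h)\to 0$ and Corollary~\ref{cor:CohNat}'', and you have simply written out the bookkeeping behind it. The only microscopic refinement worth noting is that for $h\neq 0$ the vanishing of $\epsilon(-l')$ could alternatively be justified by $-l'=-r_h-l_0<0$ (which also covers the $h=0$ case uniformly), but your class argument in $H$ is equally valid there.
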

\begin{proof}
Use the exact sequence $0\to \calO(-r_h-l_0)\to \calO(-r_h)\to \calO_{l_0}(-r_h)\to 0$ and Corollary
\ref{cor:CohNat}.
\end{proof}
\begin{remark} Proposition \ref{prop:Hilbert}  via (\ref{eq:rh}) and Corollary \ref{cor:pg}
 can be written $h$--uniformly:
 $$\mathfrak{h}(r_h+l_0)=
 \min_{l\in L_{\geq 0}} \{\chi_{k_h}(l_0+l)\}+h^1(\tX,\calO_{\tX}(-r_h))
 \ \ \ (\forall\ h\in H, \ l_0\in L_{>0}).$$
\end{remark}

\subsection{The Poincar\'e series} \label{ss:Poncare}
Let $P(\bt)$ be the multivariable equivariant Poincar\'e series associated with $(X,o)$ and its
fixed resolution, cf. \cite{CDGPs,CDGEq,NCL}. It is defined as $P(\bt)=-H(\bt)\cdot \prod_{v\in\calv}
(1-t_v^{-1})$. It is known that it is  supported on $\calS'$.  Proposition
\ref{prop:Hilbert} implies the following.
\begin{corollary}\label{cor:Poincare} Write $P(\bt)=\sum_{l'\in\calS'}\mathfrak{p}(l')\bt^{l'}$. Then $\mathfrak{p}(0)=1$ and for $l'>0$ one has
$$\mathfrak{p}(l')= \sum_{I\subset \calv} (-1)^{|I|+1}\, \min_{ l\in L_{\geq 0}}
\chi(l'+l+E_I).$$
\end{corollary}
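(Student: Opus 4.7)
The plan is a direct coefficient extraction from the defining identity
$$P(\bt) = -H(\bt)\cdot \prod_{v\in\calv}(1-t_v^{-1}),$$
combined with Proposition \ref{prop:Hilbert}. Expanding $\prod_v(1-t_v^{-1}) = \sum_{I\subset\calv}(-1)^{|I|}\bt^{-E_I}$ and multiplying with $-H(\bt) = -\sum_{l'}\mathfrak{h}(l')\bt^{l'}$, one reads off, for any $l' > 0$,
$$\mathfrak{p}(l') = \sum_{I\subset\calv}(-1)^{|I|+1}\,\mathfrak{h}(l'+E_I).$$
The normalization $\mathfrak{p}(0)=1$ is part of the very definition of the equivariant Poincar\'e series (constant term of the local algebra), so no separate argument is required there.

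The next step substitutes Proposition \ref{prop:Hilbert} into each $\mathfrak{h}(l'+E_I)$. Write $l' = r_h + l_0$ with $h=[l']\in H$; since $E_I\in L$, we have $l'+E_I = r_h + (l_0+E_I)$ with the same class $h$. For $h\neq 0$, formula (\ref{eq:Hilbert}) gives
$$\mathfrak{h}(l'+E_I) = \min_{l\in L_{\geq 0}}\chi(l'+l+E_I)\;-\;\min_{l\in L_{\geq 0}}\chi(r_h+l)$$
whenever $l_0+E_I > 0$; this condition fails only when $l_0=0$ and $I=\emptyset$, in which case $\mathfrak{h}(r_h)=0$ and the right-hand side also equals $\min_l\chi(r_h+l)-\min_l\chi(r_h+l)=0$, so the displayed identity extends uniformly over all $I\subset\calv$. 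For $h=0$ one has $l'\in L_{>0}$, so $l'+E_I\in L_{>0}$ for every $I$, and (\ref{eq:Hilbert2}) yields
$$\mathfrak{h}(l'+E_I) = \min_{l\in L_{\geq 0}}\chi(l'+l+E_I)\;-\;\min_{l\in L_{\geq 0}}\chi(l) + \epsilon,$$
where $\epsilon\in\{0,1\}$ depending on whether $(X,o)$ is non-rational.

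In both cases the correction term is independent of $I$, so upon summation it is multiplied by $\sum_{I\subset\calv}(-1)^{|I|+1}=0$ (the binomial identity for the non-empty index set $\calv$) and drops out. What remains is precisely
$$\mathfrak{p}(l') = \sum_{I\subset\calv}(-1)^{|I|+1}\,\min_{l\in L_{\geq 0}}\chi(l'+l+E_I),$$
as claimed. There is no substantive obstacle: the only delicate point is checking the uniformity of the Hilbert-series formula across $I$, which is handled by the observation that $\mathfrak{h}(r_h)=0$ is consistent with the $l_0=0$ evaluation of (\ref{eq:Hilbert}). Everything else is a purely formal manipulation of series combined with the vanishing of the alternating sum $\sum_I(-1)^{|I|}$.
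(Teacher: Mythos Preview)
Your proof is correct and is precisely the argument the paper has in mind: the paper's own proof is the single line ``Proposition \ref{prop:Hilbert} implies the following,'' and you have simply unpacked that implication via the coefficient identity $\mathfrak{p}(l')=\sum_I(-1)^{|I|+1}\mathfrak{h}(l'+E_I)$ together with the cancellation of the $I$--independent correction term. Your check that the boundary case $l_0=0$, $I=\emptyset$ is consistent with (\ref{eq:Hilbert}) is the only point requiring care, and you handle it correctly.
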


\subsection{The analytic semigroup}\label{ss:AnnalSemigr}
The analytic semigroup is defined as
$$\calS'_{an}:= \{l'\,:\, H^0(\tX,\calO_{\tX}(l'))_{reg}\not=\emptyset\}= \{l'\,:\,
\mathfrak{h}(l')<\mathfrak{h}(l'+E_v) \
 \mbox{for any $v\in\calv$}\}.$$
\begin{corollary}\label{cor:ansemgr}
If $(X,o)$ is generic then
$\calS'_{an}= \{l'\,:\, \chi(l')<
\chi(l' +l) \ \mbox{for any $l\in L_{>0}$}\}\cup\{0\}$ and $h^1(\tX, \calO_{\tX}(l'))=0$ for any
$l' \in-\calS'_{an}\setminus \{0\}$.
\end{corollary}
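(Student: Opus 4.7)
The plan is to deduce both assertions as a formal unwinding of Proposition \ref{prop:Hilbert} and Corollary \ref{cor:CohNat} via a combinatorial lemma about $\phi(l') := \min_{l \in L_{\geq 0}} \chi(l'+l)$. First I would rewrite the characterization of $\calS'_{an}$: by Proposition \ref{prop:Hilbert}, whenever $l' = r_h + l_0$ with $l_0 > 0$, the quantity $\mathfrak{h}(l') - \phi(l')$ depends only on the class $h = [l']$ (with an additional $+1$ when $h = 0$ and $(X,o)$ is non-rational). Since $[l' + E_v] = [l']$, the difference $\mathfrak{h}(l' + E_v) - \mathfrak{h}(l')$ equals $\phi(l' + E_v) - \phi(l')$. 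Hence, for $l' \ne 0$, the defining condition $l' \in \calS'_{an}$ is equivalent to $\phi(l') < \phi(l' + E_v)$ for every $v \in \calv$ (one treats the boundary case $l' = r_h$, i.e. $l_0 = 0$, separately using $\mathfrak{h}(r_h) = 0$).

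The key combinatorial step is to show the equivalence between (i) $\phi(l') < \phi(l' + E_v)$ for every $v$, and (ii) $\chi(l') < \chi(l' + l)$ for every $l \in L_{>0}$. For (i)$\Rightarrow$(ii), if the minimum defining $\phi(l')$ were achieved at some $l^* > 0$, decompose $l^* = E_v + l''$ with $v$ in the support of $l^*$ and $l'' \ge 0$; then
\[
\phi(l' + E_v) \leq \chi(l' + E_v + l'') = \chi(l' + l^*) = \phi(l'),
\]
contradicting (i). So $\phi(l') = \chi(l')$, and the identical decomposition applied to any putative $l_0 > 0$ with $\chi(l' + l_0) \leq \chi(l')$ yields the same contradiction, proving (ii). The converse is immediate: under (ii), $\phi(l') = \chi(l')$, and for every $v$ and every $l \geq 0$ one has $E_v + l > 0$, so $\chi(l' + E_v + l) > \chi(l')$, whence $\phi(l' + E_v) > \phi(l')$. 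The case $l' = 0$ is handled by hand: $0 \in \calS'_{an}$ always, because $1 \in H^0(\tX, \calO_{\tX})$ is nowhere vanishing; while the strict inequality $\chi(0) < \chi(l)$ for all $l > 0$ is Artin's criterion for rationality, which fails precisely in the non-rational case — this accounts for the explicit $\cup\,\{0\}$.

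For the vanishing $h^1(\tX, \calO_{\tX}(l')) = 0$ on $l' \in -\calS'_{an} \setminus \{0\}$, write $l' = -l''$ with $l'' \in \calS'_{an} \setminus \{0\}$. The just-proved characterization forces $\min_{l \in L_{\geq 0}} \chi(l'' + l) = \chi(l'')$ with unique minimizer $l = 0$, which in the language of \cite[Prop. 4.3.3]{trieste} is exactly $l_{\min} = 0$, i.e.\ $l'' \in \calS'$. Since every nonzero element of the Lipman cone is a non-negative $\Z$-combination of the $E_v^*$'s, all of whose coefficients are strictly positive, $l''$ has strictly positive coefficients; hence $l' = -l'' \notin L_{\geq 0}$, so $\epsilon(l') = 0$ in Corollary \ref{cor:CohNat}. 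Plugging into that corollary,
\[
h^1(\tX, \calO_{\tX}(l')) = \chi(-l') - \min_{l \in L_{\geq 0}} \chi(-l' + l) + \epsilon(l') = \chi(l'') - \chi(l'') + 0 = 0.
\]

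There is no substantive obstacle beyond the two technical bookkeeping points: the rational/non-rational dichotomy at $l' = 0$, and the verification $\calS'_{an} \subset \calS'$ (via the $\chi$-characterization) needed to conclude $\epsilon(l') = 0$. Both are resolved directly using the fact that all $E_v^*$ have strictly positive coefficients together with Artin's criterion.
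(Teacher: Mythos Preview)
Your proof is correct and follows exactly the approach the paper indicates (``Use Corollary \ref{cor:CohNat} and Proposition \ref{prop:Hilbert}''); you simply spell out the combinatorial details that the paper leaves to the reader, in particular the equivalence between $\phi(l')<\phi(l'+E_v)$ for all $v$ and $\chi(l')<\chi(l'+l)$ for all $l\in L_{>0}$. One small bookkeeping remark: your reduction to $l'=r_h+l_0$ with $l_0\geq 0$ implicitly uses $\calS'_{an}\subset \calS'\subset L'_{\geq 0}$, which is standard (a section without fixed components restricts nontrivially to each $E_v$), but it would be cleaner to state this at the outset rather than only invoking it later for the vanishing statement.
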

\begin{proof}
Use  Corollary \ref{cor:CohNat} and Proposition \ref{prop:Hilbert}.
\end{proof}
\begin{remark}\label{rem:ab} (a)
This formula emphasizes once more the parallelism between generic line bundles (associated with
an arbitrary analytic structure) and the natural line bundles associated with a generic analytic structure, cf. \ref{rem:natisgen} and \ref{rem:cohcyc}.
To explain this in the present situation,
consider first an arbitrary analytic structure, a resolution with fixed graph $\Gamma$,
and an effective cycle $|Z|$ as usual.
 By \cite[\S 4]{NNI} the fact that the Abel map $c^{l'}: \eca^{l'}(Z)\to \pic^{l'}(Z)$ is dominant
is independent of the analytic structure,
and it has a purely combinatorial description:
$\chi(-l')< \chi(-l'+l) \ \mbox{for any $l\in L$, $0<  l\leq Z$}\}$. Assume that $Z\gg 0$ and $l'\not=0$.
Then a generic line bundle $\calL_{gen}\in \pic^{l'}(Z)$ is in
${\rm im}(c^{l'})$
if and only if $-l'\in \calS'_{dom}:=\{-l'\,:\,
\chi(-l')< \chi(-l'+l) \ \mbox{for any $l\in L_{>0}$}\}$. On the other hand,
by Corollary \ref{cor:ansemgr},
in the context of a generic analytic type,
this happens exactly when the natural line $\calO_{Z}(l')$ is in the image of $ {\rm im}(c^{l'})$
(that is,  $\calO_{Z}(l')$ behaves as a generic line bundle).
In particular, for generic $\tX$,
$\calS'_{an}=\calS'_{dom}\cup\{0\}$.

(b) In \cite[\S  4]{NNI} several combinatorial properties of $\calS'_{dom}$ are listed.

(c) Corollary \ref{cor:ansemgr} can be compared with the definition of 
$\calS'=\{l' \,:\, \chi(l')<\chi(l'+E_v) \ \mbox{for any $v\in\calv$}\}$.
\end{remark}
\bekezdes $\calS_{an} :=\calS'_{an}\cap L$ is the semigroup of divisors
(restricted to $E$) of functions $\phi^* \calO_{(X,o)}$.  Let $Z_{max}$ be the {\bf
maximal ideal cycle} (of S. S.-T. Yau \cite{Yau1}), that is, the divisorial part of
$\phi^*(\mathfrak{m}_{(X,o)})$ (here $\mathfrak{m}_{(X,o)}$ is the maximal ideal of $\calO_{(X,o)}$).
It is the unique smallest nonzero element of $\calS_{an}$.

\begin{corollary}\label{cor:maxidealcycle} Assume that $\tX$ is generic with non--rational graph $\Gamma$. Then
$\calm=\{ Z\in L_{>0}\,:\, \chi(Z)=\min _{l\in L}\chi(l)\}$ has a unique maximal element and
$Z_{max}=\max\calm$.
\end{corollary}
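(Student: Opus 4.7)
The plan is to show that $M:=\max\calm$ (which exists uniquely by the lattice argument already recorded in \S\ref{ss:cohcycle}) coincides with $Z_{max}$, the smallest nonzero element of $\calS_{an}$. The essential input will be the combinatorial description of $\calS'_{an}$ for generic $\tX$ furnished by Corollary \ref{cor:ansemgr}, which in turn rests on Theorem \ref{th:CLB1}. Once these two facts are in hand, the rest is an almost automatic comparison.

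First I would check that $M$ lies in $\calS_{an}=\calS'_{an}\cap L$. Since $M$ is the maximum element of $\calm$ and $M\in L_{>0}$, for every $l\in L_{>0}$ the cycle $M+l$ is strictly bigger than $M$ in the partial order on $L$ and hence cannot belong to $\calm$; as $M\in\calm$ realizes $\min_{l'\in L}\chi(l')=\chi(M)$, this forces $\chi(M+l)>\chi(M)$ for every $l\in L_{>0}$. By Corollary \ref{cor:ansemgr} this is exactly the criterion for $M$ to belong to $\calS'_{an}$, and since $M\in L$ we conclude $M\in\calS_{an}$. Since $Z_{max}$ is by definition the smallest nonzero element of $\calS_{an}$, and $M$ is a nonzero element of $\calS_{an}$, we deduce the inequality $Z_{max}\leq M$.

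To establish the reverse inequality, write $M=Z_{max}+l_0$ with $l_0\in L_{\geq 0}$ and assume toward a contradiction that $l_0>0$. Because $Z_{max}$ is itself a nonzero element of $\calS_{an}\subset\calS'_{an}$, Corollary \ref{cor:ansemgr} yields the strict inequality
\[
\chi(Z_{max})<\chi(Z_{max}+l_0)=\chi(M)=\min_{l\in L}\chi(l),
\]
which contradicts the definition of the minimum (note $\chi(Z_{max})\geq\min_L\chi$). Hence $l_0=0$ and $M=Z_{max}$, which is the claimed identification. The argument also reproves the existence of a unique maximum of $\calm$ as a byproduct of equating it with the well-defined element $Z_{max}$.

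There is no serious technical obstacle at this final step; the entire weight of the proof sits in Corollary \ref{cor:ansemgr} (and hence in Theorem \ref{th:CLB1}), together with the elementary lattice fact from \S\ref{ss:cohcycle} that $\calm$ is closed under joins and therefore admits a unique maximal element. The slight subtlety worth being careful about is the direction of the inequality: one exploits that being in $\calS'_{an}$ means $\chi$ strictly increases in \emph{every} direction $l\in L_{>0}$, which is exactly what prevents $Z_{max}$ from being a proper sub-cycle of an element of $\calm$.
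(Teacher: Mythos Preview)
Your proof is correct and follows essentially the same route as the paper's: you use the description of $\calS'_{an}$ from Corollary \ref{cor:ansemgr} to see that $\max\calm\in\calS_{an}$ (hence $Z_{max}\leq\max\calm$), and then apply the same criterion to $Z_{max}\in\calS_{an}$ to rule out $Z_{max}<\max\calm$. The only minor quibble is your closing remark that the argument ``reproves'' the existence of a unique maximum of $\calm$: in fact you used that existence (from \S\ref{ss:cohcycle}) as an input, so it is not a byproduct.
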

\begin{proof}
For the first part see the second paragraph of \ref{ss:cohcycle}.
$\max\calm \in \calS_{an}$ by the right hand side of \ref{cor:ansemgr}, but
$\min\calS_{an}$ cannot be smaller than $\max\calm$ by the very same identity.
\end{proof}
\begin{remark}
Recall that the fundamental (or minimal, or Artin) cycle $Z_{min}:=\min \{\calS'\cap L_{>0}\}$
has the property $h^0(\calO_{Z_{min}})=1$, hence $h^1(\calO_{Z_{min}})=1-\chi(Z_{min})$
(see e.g. \cite{Nfive}). For $\tX$ generic and $(X,o)$  non--rational
  any cycle $Z\in\calm$
(in particular $Z_{max}$ too) has this  property. Indeed, $h^1(\calO_Z)=1-\min _{0<l\leq Z}\chi(l)
=1-\chi(Z) $, hence $h^0(\calO_Z)=1$ too.
\end{remark}

\begin{corollary}\label{cor:cyccyc}
For $(X,o)$ generic one has $Z_{max}\geq Z_{coh}$. If additionally $(X,o)$ is numerically
Gorenstein then $Z_{coh}+Z_{max}=Z_K$.
\end{corollary}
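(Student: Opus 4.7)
The inequality $Z_{max} \geq Z_{coh}$ is immediate from Corollaries~\ref{cor:cohcycle} and~\ref{cor:maxidealcycle}: both cycles belong to $\calm = \{Z \in L_{>0} : \chi(Z) = \min_L \chi\}$, with $Z_{coh}$ the unique minimum and $Z_{max}$ the unique maximum.

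For the identity $Z_{coh}+Z_{max}=Z_K$, the numerically Gorenstein hypothesis gives $Z_K \in L$, so the reflection $\sigma : L \to L$, $\sigma(l) = Z_K - l$, is a lattice involution. A direct computation from $\chi(l)=-(l,l-Z_K)/2$ shows
\[
\chi(Z_K - l) = -(Z_K-l,-l)/2 = (Z_K,l)/2 - (l,l)/2 = \chi(l),
\]
so $\sigma$ preserves the full minimum set $\overline{\calm}:=\{l\in L:\chi(l)=\min_L\chi\}$. My plan is to show that $\sigma$ restricts to an involution of $\calm$ which must interchange the unique extrema, whence $\sigma(Z_{coh}) = Z_{max}$ and the identity follows. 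Concretely, once I verify that $Z_K - Z_{coh}$ and $Z_K - Z_{max}$ both lie in $L_{>0}$, they belong to $\calm$, and the extremality of $Z_{coh}$, $Z_{max}$ forces
\[
Z_K - Z_{coh} \leq Z_{max} \quad\text{and}\quad Z_K - Z_{max} \geq Z_{coh},
\]
which combine to $Z_K \leq Z_{coh}+Z_{max} \leq Z_K$.

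The main obstacle is therefore establishing the coordinatewise bounds $0 < Z_{coh},\, Z_{max} \leq Z_K$ with strict inequality in at least one coordinate, so that the two reflected cycles are genuinely positive. For $Z_{coh}$ I plan to argue by truncation: set $Z' := \min(Z_{coh}, Z_K)$ and write $Z_{coh} = Z' + B$ with $B = \max(Z_{coh}-Z_K, 0) \geq 0$ and $A := Z_K - Z' \geq 0$, so $|A|$ and $|B|$ are disjoint subsets of the tree $\Gamma$. A bilinear expansion (using $Z_{coh} - Z_K = B - A$) yields
\[
\chi(Z_{coh}) - \chi(Z') = \chi(-B) + (A,B),
\]
and combining the off-diagonal positivity $(A,B) \geq 0$ with the local minimality inequalities $E_v^2 + 1 \leq (Z_{coh}, E_v) \leq 1$ (forced by $\chi(Z_{coh} \pm E_v) \geq \chi(Z_{coh})$, itself a consequence of $Z_{coh}$ being a global minimum of $\chi$) should give $\chi(-B) \geq 0$. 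This puts $Z' \in \overline{\calm}$; since $Z' > 0$, actually $Z' \in \calm$, and minimality of $Z_{coh}$ then forces $Z' = Z_{coh}$, i.e.~$B=0$ and $Z_{coh}\leq Z_K$. The bound $Z_{max} \leq Z_K$ is handled by the parallel argument, exchanging the roles of minimum and maximum (equivalently, applying $\sigma$ to the inequality just proved for $Z_{coh}$ inside $\calm$).
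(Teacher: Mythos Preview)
The paper states this corollary without proof, so there is nothing to compare against directly. Your strategy---exploit the order-reversing, $\chi$-preserving involution $\sigma(l)=Z_K-l$ on $L$, and reduce to checking $Z_K-Z_{coh},\,Z_K-Z_{max}\in L_{>0}$---is the natural one, and the reduction is correct (in fact, since $Z_{coh}\le Z_{max}$, the single condition $Z_K-Z_{max}>0$ already suffices).

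The execution has gaps. First, the route through ``$\chi(-B)\ge 0$'' is misdirected. Since trivially $\chi(Z')\ge\min_L\chi=\chi(Z_{coh})$, your identity $\chi(Z_{coh})-\chi(Z')=\chi(-B)+(A,B)$ forces $\chi(-B)+(A,B)\le 0$, hence $\chi(-B)\le -(A,B)\le 0$. So your claimed inequality $\chi(-B)\ge 0$ would amount to proving $\chi(-B)=0$ \emph{and} $(A,B)=0$, which is strictly stronger than what you need and is not extractable from the local bounds $E_v^2+1\le (Z_{coh},E_v)\le 1$ alone. The conclusion $Z'\in\overline{\calm}$ that you want \emph{is} true, but by a cleaner one-liner: $Z'=\min(Z_{coh},Z_K)=\sigma\big((\sigma(Z_{coh}))_+\big)$, and $\overline{\calm}$ is stable under both $\sigma$ and the positive-part map $l\mapsto l_+$ (the latter is exactly the argument in Step~2 of the proof of Proposition~\ref{prop:QGor}).

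Second, the assertion ``since $Z'>0$'' is unsupported. The truncation $Z'=\min(Z_{coh},Z_K)$ lies in $L_{\ge 0}$ only when $Z_K\ge 0$; this holds on the minimal resolution (where $Z_K\in\calS'\setminus\{0\}$ for non-rational graphs) but not in general, and you never invoke that hypothesis. Even granting $Z'\ge 0$, you must still rule out $Z'=0$, equivalently $Z_K-Z_{max}=0$. Since $\chi(Z_K)=\chi(0)=0$, this boundary case is precisely the elliptic situation $\min_L\chi=0$: there $0\in\overline{\calm}$, hence $\min\overline{\calm}\le 0<Z_{coh}$, and the identity $Z_K-Z_{max}=\min\overline{\calm}=Z_{coh}$ breaks down (e.g.\ for minimally elliptic graphs one has $Z_{coh}=Z_{max}=Z_K$, so $Z_{coh}+Z_{max}=2Z_K$). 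Your argument needs to either exclude this case explicitly or supply a separate treatment.
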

\subsection{The $\calO_{(X,o)}$--multiplication on $H^1(\tX,\calO_{\tX})$}\label{ss:module}
Assume that $p_g>0$. On $H^1(\tX,\calO_{\tX})$ the $\calO_{(X,o)}$--module  multiplication
 transforms on the dual vector space  $H^1(\tX,\calO_{\tX})^*=H^0(\tX\setminus E, \Omega^2_{\tX})/H^0(\tX,\Omega^2_{\tX})$
into the multiplication of forms by functions.
The filtration on $H^1(\tX,\calO_{\tX})$  induced by the powers of the maximal ideal agrees with the filtration associated   by the nilpotent operator determined  by multiplication by a generic element of $\mathfrak{m}_{(X,o)}$. For details see e.g.  \cite{Tomari86}.

The poles of forms are bounded by $Z_{coh}$.
Indeed, by the exact sequence $0\to \Omega^2\to \Omega^2(Z_{coh})\to \calO_{Z_{coh}}(Z_{coh}+K_{\tX})\to
0$ and from the vanishing $h^1(\Omega^2)=0$ (and from Serre duality) we have
$\dim H^0(\Omega^2(Z_{coh}))/H^0(\Omega^2)=h^0(\calO_{Z_{coh}}(Z_{coh}+K_{\tX}))=h^1(\calO_{Z_{coh}})=p_g$.
 Hence the subspace $H^0(\Omega^2(Z_{coh}))/H^0(\Omega^2)
 \subset H^0(\tX\setminus E,\Omega^2)/H^0(\Omega^2)$ has codimension zero, hence the spaces agree.

 \begin{corollary}\label{cor:poles}
 If $\tX$ is generic then $\mathfrak{m}_{(X,o)}\cdot H^1(\tX,\calO_{\tX})=0$.
 In particular, the $\calO_{(X,o)} $--module multiplication factorizes to the $\C=\calO_{(X,o)}/
 \mathfrak{m}_{(X,o)}$--vector space structure.
 \end{corollary}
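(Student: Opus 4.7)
The plan is to use the Serre--Laufer duality identification
$H^1(\tX,\calO_{\tX})^*\simeq H^0(\tX\setminus E,\Omega^2_{\tX})/H^0(\tX,\Omega^2_{\tX})$
from (\ref{eq:LD}), under which multiplication by $f\in\calO_{(X,o)}$ on $H^1(\tX,\calO_{\tX})$
transposes to multiplication of forms by $f$ (as recalled at the start of \ref{ss:module}).
Since annihilation of the $\mathfrak{m}_{(X,o)}$--action on the dual is equivalent to annihilation
on $H^1(\tX,\calO_{\tX})$ itself, it is enough to show that for every
$f\in\mathfrak{m}_{(X,o)}$ and every $\tomega\in H^0(\tX\setminus E,\Omega^2_{\tX})$ the product
$f\tomega$ lies in $H^0(\tX,\Omega^2_{\tX})$. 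If $(X,o)$ is rational then $p_g=0$ and there is
nothing to prove, so we may assume $(X,o)$ non--rational.

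First I would invoke the bound on poles stated in the paragraph preceding the corollary: the
natural map $H^0(\tX,\Omega^2_{\tX}(Z_{coh}))/H^0(\tX,\Omega^2_{\tX}) \to
H^0(\tX\setminus E,\Omega^2_{\tX})/H^0(\tX,\Omega^2_{\tX})$ is an isomorphism, so every class
$[\tomega]$ on the right can be represented by a form $\tomega$ with divisorial pole
bounded above along $E$ by $Z_{coh}$, i.e.\ with $(\tomega)+Z_{coh}\geq 0$ on a neighborhood
of $E$. Next I would use the key geometric input supplied by Corollary \ref{cor:cyccyc}, which
for a generic non--rational $\tX$ gives the inequality $Z_{max}\geq Z_{coh}$. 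By the very
definition of $Z_{max}$ as the divisorial part of $\phi^*(\mathfrak{m}_{(X,o)})$, every
$f\in\mathfrak{m}_{(X,o)}$ satisfies $(\phi^*f)|_E\geq Z_{max}$.

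Combining these two inputs yields
\[
(f\tomega)\geq Z_{max}-Z_{coh}\geq 0
\]
in a neighborhood of $E$, so $f\tomega$ has no poles along $E$ and extends to an element
of $H^0(\tX,\Omega^2_{\tX})$. Hence $f\cdot[\tomega]=0$ in
$H^0(\tX\setminus E,\Omega^2_{\tX})/H^0(\tX,\Omega^2_{\tX})$, proving that
$\mathfrak{m}_{(X,o)}$ annihilates the dual, and therefore also
$\mathfrak{m}_{(X,o)}\cdot H^1(\tX,\calO_{\tX})=0$. The second assertion is then immediate: the
$\calO_{(X,o)}$--action on $H^1(\tX,\calO_{\tX})$ factors through the quotient
$\calO_{(X,o)}/\mathfrak{m}_{(X,o)}=\C$. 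There is no real obstacle here; the work has already
been done in Corollary \ref{cor:cyccyc}, whose identification $Z_{coh}=\min\calm$,
$Z_{max}=\max\calm$ is exactly the ingredient that makes the pole of any form bounded by
$Z_{coh}$ collide with the vanishing order $Z_{max}$ of any element of
$\mathfrak{m}_{(X,o)}$.
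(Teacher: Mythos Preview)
Your proof is correct and follows essentially the same approach as the paper: both use the pole bound $H^0(\tX\setminus E,\Omega^2_{\tX})/H^0(\tX,\Omega^2_{\tX})=H^0(\tX,\Omega^2_{\tX}(Z_{coh}))/H^0(\tX,\Omega^2_{\tX})$ established just before the corollary, together with $Z_{max}\geq Z_{coh}$ from Corollary~\ref{cor:cyccyc}, to conclude that $\mathfrak{m}_{(X,o)}\cdot H^0(\Omega^2_{\tX}(Z_{coh}))\subset H^0(\Omega^2_{\tX}(Z_{coh}-Z_{max}))\subset H^0(\tX,\Omega^2_{\tX})$. The paper compresses this into a single line, while you spell out the duality, the rational case, and the divisor inequalities more explicitly.
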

\begin{proof}
Since $Z_{max}\geq Z_{coh}$, cf. \ref{cor:cyccyc},
$\mathfrak{m}_{(X,o)}\cdot H^0(\Omega^2(Z_{coh}))\subset H^0(\Omega^2(-Z_{max}+Z_{coh}))\subset
H^0(\Omega^2)$.
\end{proof}

\subsection{Generic $\Q$--Gorenstein singularities}\label{ss:Goren}
Recall that a singularity $(X,o)$ is Gorenstein if
the anticanonical cycle $Z_K$ is integral, and
$\Omega^2_{\tX}=\calO_{\tX}(K_{\tX})$ equals
$\calO_{\tX}(-Z_K)$.  Hence in this case
$\calO_{\tX}(K_{\tX})$ is natural. Recall, that more generally,
 a line bunlde $\calL$ is natural if and only if
one of its powers has the form $\calO_{\tX}(l)$ for some $l\in L$, or equivalently,
if and only if
its restriction $\calL|_{\tX\setminus E}\in \pic(\tX\setminus E)={\rm Cl}(X,o)$ has finite order
(that is, it is $\Q$--Cartier). In particular, $(X,o)$ is $\Q$--Gorenstein if and only if
$\calO_{\tX}(K_{\tX})$ is a natural line bundle, which automatically should agree with $\calO_{\tX}(-Z_K)$.

\begin{proposition}\label{prop:QGor}
If a $\Q$--Gorenstein singularity $(X,o)$ admits a resolution $\tX$
 with generic analytic structure, then $(X,o)$ is either rational of minimally elliptic.
\end{proposition}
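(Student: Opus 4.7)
The plan is to suppose $(X,o)$ is $\Q$-Gorenstein with a generic resolution $\tX$ and non-rational, and then force it to be minimally elliptic. The $\Q$-Gorenstein hypothesis is used through the identification $\Omega^2_{\tX}=\calO_{\tX}(-Z_K)$ as natural line bundles, which opens the door to applying the topological cohomology machinery of the present section directly to the canonical sheaf. The aim is to pin down $p_g=1$, $\min\chi=0$ and $Z_K\in L$, placing $(X,o)$ in the minimally elliptic class.

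The first step is to apply Corollary \ref{cor:CohNat} to the natural line bundle $\calO_{\tX}(-Z_K)=\Omega^2_{\tX}$. Grauert--Riemenschneider vanishing gives $h^1(\tX,\Omega^2_{\tX})=0$; together with $\chi(Z_K)=0$ and $\epsilon(-Z_K)=0$ in the non-rational case, the topological formula yields
\[
\min_{l\in L_{\geq 0}}\chi(Z_K+l)=0,
\]
attained at $l=0$, so $\chi(Z_K+l)\geq 0$ for every effective integer cycle $l$. This combines with Corollary \ref{cor:pg} ($p_g=1-\min_{l>0}\chi(l)$) to relate $p_g$ to the position of $Z_K$ in the level sets of $\chi$.

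The second and crucial step is to play the same game for the dualizing sheaf $\omega_{Z_{coh}}$ of the cohomological cycle. Adjunction and $\Q$-Gorenstein-ness give $\omega_{Z_{coh}}=\Omega^2_{\tX}(Z_{coh})|_{Z_{coh}}=\calO_{\tX}(Z_{coh}-Z_K)|_{Z_{coh}}$, so it is the restriction of a natural line bundle. On the one hand, Serre duality on $Z_{coh}$ gives $h^1(\omega_{Z_{coh}})=h^0(\calO_{Z_{coh}})$; using $h^1(\calO_{Z_{coh}})=p_g$ and Riemann--Roch $\chi(\calO_{Z_{coh}})=\chi(Z_{coh})=\min\chi$, together with $p_g=1-\min\chi$, one computes $h^0(\calO_{Z_{coh}})=\min\chi+p_g=1$. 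On the other hand, Theorem \ref{th:CLB1}(II) (or its extension Theorem \ref{th:CohNat}, applied after the Laufer-type reduction of \ref{bek:Lauferseq} to arrange strict negativity of the Chern class coefficients on $|Z_{coh}|$) identifies $h^1(\omega_{Z_{coh}})$ with the topological quantity $\chi(Z_K-Z_{coh})-\min_{0\leq l\leq Z_{coh}}\chi(Z_K-Z_{coh}+l)$.

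Matching these two computations, and in the numerically Gorenstein case invoking Corollary \ref{cor:cyccyc} ($Z_K=Z_{coh}+Z_{max}$, so $Z_K-Z_{coh}=Z_{max}$ and $\chi(Z_{max})=\min\chi$), produces a sharp combinatorial equality between the analytic value $1$ and the topological expression. Combined with the first-step constraint on $\min_{l\geq 0}\chi(Z_K+l)$, this equality forces $\min\chi=0$, whence $(X,o)$ is elliptic with $p_g=1$; the same tight constraints then force $Z_K\in L$, so $(X,o)$ is genuinely Gorenstein, and therefore minimally elliptic. The principal obstacle is the combinatorial work in this final step: extracting simultaneously $\min\chi=0$ and the integrality $Z_K\in L$ from the single cohomological equality, and verifying that Theorem \ref{th:CLB1}(II) can be applied after the Laufer-type reduction. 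The $\Q$-Gorenstein assumption enters crucially here, since without it the line bundle $\omega_{Z_{coh}}$ would not be recognizable as a restricted natural line bundle and the topological formula would not be available.
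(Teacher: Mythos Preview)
Your plan has the right spirit --- use $\Q$-Gorenstein to identify $\Omega^2_{\tX}$ and its twists with natural line bundles, then compare the Serre-duality value of $h^1$ with the topological formula --- but there is a genuine gap at precisely the point you flag as the ``principal obstacle''.

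If Theorem~\ref{th:CLB1}(II) (or Theorem~\ref{th:CohNat}) applied to the pair $(Z_{coh},\, l'=Z_{coh}-Z_K)$, your topological expression evaluates unconditionally to
\[
\chi(Z_K-Z_{coh})-\min_{0\le l\le Z_{coh}}\chi(Z_K-Z_{coh}+l)
=\chi(Z_{coh})-\min_{0\le m\le Z_{coh}}\chi(m)
=\min\chi-\min\chi=0,
\]
using the symmetry $\chi(D)=\chi(Z_K-D)$ and the fact that $Z_{coh}$ realizes $\min\chi$. Matched against the Serre-duality value $h^0(\calO_{Z_{coh}})=1$ this gives $0=1$, a flat contradiction, not a constraint from which one can ``extract $\min\chi=0$ and $Z_K\in L$''. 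So the real content of your argument would have to be: the negativity hypothesis of Theorem~\ref{th:CohNat} (after the Laufer reduction of \ref{bek:Lauferseq}) holds for $(Z_{coh},Z_{coh}-Z_K)$ whenever $(X,o)$ is non-rational and \emph{not} minimally elliptic. You never verify this, and it is exactly where the minimally elliptic case must be separated out; your final paragraph's chain of implications does not do this work.

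The paper's proof avoids this difficulty by two devices. First, it works with $h^1(\tX,\calO_{\tX}(Z-Z_K))$ rather than $h^1(Z,\omega_Z)$: Corollary~\ref{cor:CohNat} applies to \emph{every} $l'\in L'$, with no negativity hypothesis, the only subtlety being the correction term $\epsilon(l')$. Second, instead of $Z=Z_{coh}$ it constructs (in Step~1, by a short case analysis on $\lfloor Z_K\rfloor$ and $Z_K-E_v$) a cycle $Z>0$ with $h^1(\calO_Z)>0$ and the additional property $Z\not\geq Z_K$; this last inequality is exactly what forces $\epsilon(Z-Z_K)=0$. The assumption ``not minimally elliptic'' is used precisely to guarantee such a $Z$ exists. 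Then the two computations of $h^1(\tX,\calO_{\tX}(Z-Z_K))$ --- one via Corollary~\ref{cor:CohNat}, one via the short exact sequence $0\to\Omega^2_{\tX}\to\Omega^2_{\tX}(Z)\to\omega_Z\to 0$ and Serre duality on $Z$ --- collide to give $-\min_{0\le l\le Z}\chi(l)=1-\min_{0<l\le Z}\chi(l)$, forcing $\chi(l)>0$ for all $0<l\le Z$ and hence $h^1(\calO_Z)=0$, the desired contradiction.
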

\begin{proof} 
{\bf Step 1.} Let us fix a resolution $\tX$  of a normal surface singularity $(X,o)$.
We claim that if $(X,o)$ is neither rational nor minimally elliptic then there exists an
effective cycle $Z>0$, $|Z|\subset E$, with $Z\not\geq Z_K$ and with $h^1(\calO_{Z})>0$.

Assume first that $\tX=\tX_{min}$ is a minimal resolution. Then $Z_K\geq 0$ (by adjunction formulae, see also
\cite{Laumult}).
By vanishing  $h^1(\calO_{\tX}(-\lfloor Z_K \rfloor))=0$ we get that
$h^1(\calO_{\lfloor Z_K \rfloor})=p_g$.   Since $(X,o)$
is not rational, necessarily  $\lfloor Z_K \rfloor>0$. Hence, if
$\lfloor Z_K \rfloor<Z_K$ then $Z=\lfloor Z_K \rfloor$ works.

Assume that
$\lfloor Z_K \rfloor =Z_K$. Then $Z_K\in L$ and  $Z_K>0$ (since $p_g>0$)
hence  necessarily  $Z_K\geq E$ (see \cite{Laumult}).  For any $v\in\calv$ consider the exacts sequence
$ 0\to \calO_{E_v}(-Z_K+E_v)\to \calO_{Z_K}\to \calO_{Z_K-E_v}\to 0$.
If  $h^1(\calO_{Z_K-E_v})>0$ for some $v$ then we take  $Z=Z_K-E_v$.
Otherwise, $h^1(\calO_{Z_K-E_v})=0$ for every $v$. Since $h^1(\calO_{E_v}(-Z_K+E_v))=1$
we get that $p_g=1$ and $Z_K=Z_{coh}$. Then the geometric genus of the singularities
 obtained by contracting any $E\setminus E_v$ is rational, hence $(X,o)$ is minimally elliptic
(for details see \cite{Laufer77} or \cite{MR}).

Finally, let $\tX$ be arbitrary and let $\pi:\tX\to \tX_{min}$ be the corresponding modification of the minimal one. Let $0<Z<Z_K$ be the cycle obtained previously for $\tX_{min}$.
Then $\pi^*(Z)$ works in $\tX$.

{\bf Step2.} Fix the generic resolution $\tX$.
Assume that $(X,o)$ is neither rational nor minimally elliptic. Chose a cycle $Z$
as in Step 1. Using $0\to \Omega^2_{\tX}\to \Omega^2_{\tX}(Z)\to \calO_{Z}(Z+K_{\tX})\to 0$, we get that
$h^1(\Omega^2_{\tX}(Z))=h^1(\calO_Z(Z+K_{\tX}))=h^0(\calO_Z)$. Since $(X,o)$ is $\Q$--Gorenstein,
$\Omega^2_{\tX}(Z)=\calO_{\tX}(Z-Z_K)$, hence
$h^1(\calO_{\tX}(Z-Z_K))=h^0(\calO_Z)=
\chi(Z)+h^1(\calO_Z)$.
Now we apply (\ref{eq:CohNatCor})  and (\ref{eq:h1Z}), and we get
 $$\chi(Z_K-Z)-\min_{l\geq 0}\{\chi (Z_K-Z+l)\}= \chi(Z)+1-\min_{0<l\leq Z}\{\chi(l)\}.$$
Since $\chi(D)=\chi(Z_K-D)$ this transforms into
 $-\min_{l\leq Z}\{\chi (l)\}= 1-\min_{0<l\leq Z}\{\chi(l)\}$.
 Next we claim that $\min_{l\leq Z}\{\chi (l)\}=\min_{0\leq l\leq Z}\{\chi (l)\}$.
 Indeed, if $l=l_+-l_-$ with $l_+, l_-\geq 0$ and with different supports, then
 there exists $E_v\in |l_-|$ such that $(E_v,l_-)<0$; then by a computation
 $\chi(l+E_v)\leq \chi(l)$. Hence inductively $\chi(l_+)\leq \chi(l)$. Therefore,
 $$-\min_{0\leq l\leq Z}\{\chi (l)\}= 1-\min_{0<l\leq Z}\{\chi(l)\}.$$
 This means that $\min_{0\leq l\leq Z}\{\chi (l)\}$ cannot be realized by an element $l>0$, hence
 $0=\chi(0)< \min_{0<l\leq Z}\{\chi(l)\}$. But this implies $h^1(\calO_Z)=0$ (see \cite[Example 4.1.3]{NNI}), a contradiction.
\end{proof}

\begin{remark}\label{rem:Laufergen}
Proposition \ref{prop:QGor} generalizes the following result of Laufer
 \cite[Th. 4.3]{Laufer77} (with a different proof):
if the generic analytic structure of a numerically Gorenstein topological type  is Gorenstein then the topological type is either Klein or minimally elliptic.
(Recall that the Klein --- or $ADE$ ---
singularities are exactly the Gorenstein rational singularities.)
\end{remark}

\end{document}